\newtheorem{thm}{Theorem}
\newtheorem{lem}[thm]{Lemma}
\newtheorem{proposition}{Proposition}
\theoremstyle{definition}
\newtheorem{dfn}{Definition}
\newdefinition{rmk}{Remark}
\theoremstyle{remark}
\journal{Journal of \LaTeX\ Templates}
\def\ps@pprintTitle{%
   \let\@oddhead\@empty
   \let\@evenhead\@empty
   \def\@oddfoot{\reset@font\hfil\thepage\hfil}
   \let\@evenfoot\@oddfoot
}
\begin{document}

\begin{frontmatter}

\title{The constructions of directed strongly regular graph by Algebraic method}
\author[]{Yiqin He}
\ead{2014750113@smail.xtu.edu.cn}
\author[]{Bicheng Zhang\corref{cor1}}
\ead{zhangbicheng@xtu.edu.cn}
\author[]{Huabin Cao}
\ead{2014750117@smail.xtu.edu.cn}
\address{Department of Mathematics and Computational Science, Xiangtan Univerisity, Xiangtan, Hunan, 411105, PR China}
\cortext[cor1]{Corresponding author}



%

\begin{abstract}
The concept of directed strongly regular graphs (DSRG) was introduced by Duval in ``A Directed Graph Version of Strongly Regular Graphs" [Journal of Combinatorial Theory, Series A 47(1988)71-100]. Duval also provided several construction methods for directed strongly regular graphs. In this paper, We construct several new families of directed strongly regular graphs which are obtained by using Kronecker matrix product, Semidirect product, Cayley coset graph.\;At the same time, we give some sufficient and necessary conditions of two special families of Cayley graphs to be DSRG by using group representation theory. At last, we discuss some propositions of in(out)-neighbour set and automorphism group in directed strongly regular graphs.
\end{abstract}
\begin{keyword}
Directed strongly regular graphs, Semidirect product, Group representation theory, Cayley graph
\end{keyword}
\end{frontmatter}
\section{Introduction}

A \emph{directed strongly regular graph} (DSRG) with parameters $( n, k, \mu ,\lambda ,t)$ is a $k$-regular directed graph on $n$ vertices such that every vertex is on $t$ 2-cycles,\;and the number of paths of length two from a vertex $x$ to a vertex $y$ is $\lambda$ if there is an edge directed from $x$ to $y$ and it is $\mu$ otherwise. A DSRG with $t=k$ is an (undirected) strongly regular graph (SRG).\;Duval  showed that DSRGs with $t=0$ are the doubly regular tournaments. It is therefore usually assumed that $0<t<k$.

Another definition of a directed strongly regular graph, in terms of its adjacency matrix. Let $D$ be a directed graph
with $n$ vertices. Let $A=A(D)$ denote the adjacency matrix of $D$, and let $I = I_n$ and
$J = J_n$ denote the $n\times n$ identity matrix and all-ones matrix, respectively. Then $D$ is a directed strongly regular graph with parameters $(n,k,\mu,\lambda,t)$ if
and only if (i) $JA = AJ = kJ$ and (ii) $A^2=tI+\lambda A+\mu(J-I-A)$.\;Duval gave some propositions of DSRG in \cite{A} .

We shall assume that the reader is familiar with standard  terminology on directed graphs(see,e.g.,\cite{BO}).The vertex set and the arc set of a digraph $D$ are denote by $V(D)$ and $E(D)$. If $xy$ is an arc of digraph $D$, we say $x$ \emph{dominates} $y$ and write $x\rightarrow y$. Then the \emph{out-neighbour set} $N_D^+(x)$ of a vertex $x$ is the set of vertices dominated by $x$, and the \emph{in-neighbour set} $N_D^-(x)$ of a vertex $x$ is the set of vertices dominating $x$.
The number $d_D^+(v)=|N_D^+(x)|$ and $d_D^-(v)=|N_D^-(x)|$ are the \emph{outdegree} and \emph{indegree} of $x$.
\begin{proposition}(see \cite{A})\label{p-1}
A directed strongly regular graph with parameters $( n,k,\mu ,\lambda ,t)$ with $0<t<k$ satisfy
\[k(k+(\mu-\lambda ))=t+\left(n-1\right)\mu,\]
\[{{d}^{2}}={{\left( \mu -\lambda  \right)}^{2}}+4\left(t-\mu\right)\text{,}d|2k-(\lambda-\mu)(n-1),\]
\[\frac{2k-(\lambda-\mu)(n-1)}{d}\equiv n-1(mod\hspace{2pt}2)\text{,}\left|\frac{2k-(\lambda-\mu)(n-1)}{d}\right|\leq n-1,\]
where d is a positive integer, and
$$ 0\le \lambda <t,0<\mu \le t,-2\left( k-t-1 \right)\le \mu -\lambda \le 2\left( k-t \right).$$
\end{proposition}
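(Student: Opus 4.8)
The plan is to argue entirely from the matrix characterisation. Put $A:=A(D)$; then $A$ has nonnegative integer entries and zero diagonal (no loops, so $\operatorname{tr}A=0$), $JA=AJ=kJ$, and $A^{2}=tI+\lambda A+\mu(J-I-A)$, equivalently $A^{2}=(t-\mu)I+(\lambda-\mu)A+\mu J$. Right-multiplying the relation by $J$ and using $AJ=kJ$ gives $k^{2}J=(t+\lambda k+\mu(n-1-k))J$; comparing coefficients of $J$ yields the first identity $k(k+(\mu-\lambda))=t+(n-1)\mu$. I would also record two easy consequences: $k\le n-2$, since $k=n-1$ forces $D$ to be the complete digraph (no loops, so every vertex dominates every other), hence $A=J-I$ and $t=(A^{2})_{vv}=n-1=k$, against $t<k$; and $\mu>0$, since $\mu=0$ in the first identity gives $k(k-\lambda)=t<k$, impossible because $\lambda\le k-1$.

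For the spectrum, note $A\mathbf{1}=k\mathbf{1}$ and $A^{\mathsf T}\mathbf{1}=k\mathbf{1}$, so $\mathbf{1}^{\perp}$ is $A$-invariant; there $A^{2}=(t-\mu)I+(\lambda-\mu)A$, so every eigenvalue $\theta\ne k$ satisfies $\theta^{2}-(\lambda-\mu)\theta-(t-\mu)=0$, with roots $r,s=\frac12((\lambda-\mu)\pm d)$ and $d^{2}=(\mu-\lambda)^{2}+4(t-\mu)$. Plugging $\theta=k$ into the quadratic and using the first identity gives the value $n\mu\ne0$, so $k$ is a simple eigenvalue. Furthermore $(A-rI)(A-sI)=A^{2}-(r+s)A+rsI=\mu J$, so $(A-kI)(A-rI)(A-sI)=\mu(A-kI)J=0$ and $A$ is diagonalisable.

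I would then pin down $d$. If $d^{2}$ is not a positive perfect square — i.e.\ $d=0$, or $r,s$ irrational, or $r,s$ non-real — then the $n-1$ eigenvalues other than $k$ are symmetric in $r$ and $s$ (or all equal to $\frac12(\lambda-\mu)$, when $d=0$), so $\operatorname{tr}A=k+\frac{n-1}{2}(\lambda-\mu)=0$, whence $k=\frac{n-1}{2}(\mu-\lambda)$ with $\mu>\lambda$. Substituting into the first identity and using $k\le n-2$ forces $\mu-\lambda=1$, hence $k=\frac{n-1}{2}$ and $t=\frac{n-1}{4}((n+1)-4\mu)$; then $0<t<k$ forces $n-1<4\mu<n+1$, so $4\mu=n$ and $k=\frac{4\mu-1}{2}\notin\mathbb{Z}$, a contradiction. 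Hence $d$ is a positive integer; from $d^{2}\equiv(\lambda-\mu)^{2}\pmod 4$ we get $d\equiv\lambda-\mu\pmod 2$, so $r,s\in\mathbb{Z}$, $r\ne s$, and $A$ has spectrum $\{k^{1},r^{m_{r}},s^{m_{s}}\}$ with $m_{r}+m_{s}=n-1$. Solving this together with $\operatorname{tr}A=k+m_{r}r+m_{s}s=0$ for the multiplicities, and using that $m_{r},m_{s}$ are nonnegative integers summing to $n-1$, yields at once the divisibility $d\mid 2k-(\lambda-\mu)(n-1)$, the congruence of the corresponding quotient to $n-1$ modulo $2$, and the bound $\le n-1$ on its absolute value.

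Finally, the parameter inequalities. Here $\lambda\ge0$ and $\mu\ge0$ since each counts two-paths, and $\mu>0$ was shown. Since $r,s\in\mathbb{Z}$, $m_{r},m_{s}\ge0$, $m_{r}+m_{s}=n-1$ and $k\le n-2$, the equation $k=-(m_{r}r+m_{s}s)$ rules out $r\le-1$ (which would give $k\ge m_{r}+m_{s}=n-1$) and $s\ge0$ (which would give $k\le0$); hence $r\ge0$ and $s\le-1$. Therefore $\mu-t=rs\le0$, i.e.\ $\mu\le t$, and $(r+1)(s+1)=rs+(r+s)+1=(\mu-t)+(\lambda-\mu)+1=\lambda-t+1\le0$, i.e.\ $\lambda<t$. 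For the last pair of inequalities, $\mu-\lambda=-(r+s)$; as $D$ is strongly connected ($\mu>0$ forces a walk of length $\le2$ between any two vertices) and carries cycles of lengths $2$ and $3$ (the latter because $\operatorname{tr}A^{3}=n((\lambda-\mu)t+\mu k)=n(\mu(k-t)+\lambda t)>0$), the matrix $A$ is primitive, so $|r|,|s|<k$; this already gives $-2(k-t-1)\le\mu-\lambda\le2(k-t)$ when $t\le k/2$, whereas in general the nonnegativity of the $(x,y)$-entry of $A^{3}$ with $x\not\to y$ gives only $\mu-\lambda\le k$, which is not enough once $2t>k$. Obtaining the sharp two-sided bound in the range $t>k/2$ is the step I expect to be the main obstacle: it appears to need genuinely local combinatorial input, for instance eigenvalue interlacing for the subdigraph induced on an in- or out-neighbourhood of a vertex, or a direct count of the arcs between $N^{+}(x)$ and $N^{-}(x)$. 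The rest is bookkeeping with the first identity and the spectrum.
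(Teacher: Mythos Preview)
The paper does not prove this proposition at all; it is quoted from Duval's original paper \cite{A} without argument, so there is no ``paper's proof'' to compare against. Your spectral approach is precisely the standard one, and everything up through the integrality of $d$, the divisibility and parity conditions coming from the multiplicities, and the inequalities $0\le\lambda<t$, $0<\mu\le t$ is correct and cleanly argued. (Your derivation of $\lambda<t$ and $\mu\le t$ from $r\ge0$, $s\le-1$ is in fact exactly how Duval does it.)

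You are right that the two-sided bound $-2(k-t-1)\le\mu-\lambda\le2(k-t)$ is the one place needing more than spectral bookkeeping: neither Perron--Frobenius ($|r|,|s|\le k-1$) nor the already-proved $0\le\lambda<t$, $0<\mu\le t$ suffices once $t$ is large relative to $k$. Two observations close the gap. First, by Proposition~\ref{p-3} the complement satisfies $k'-t'=k-t$ and $\mu'-\lambda'=\lambda-\mu+2$, so the lower bound for $D$ is exactly the upper bound for $D'$; it is therefore enough to prove $\mu-\lambda\le2(k-t)$. Second, for that upper bound the argument is a direct local count at a pair $x,y$ with $x\to y$ but $y\not\to x$ (such a pair exists since $t<k$): compare the $\lambda$ vertices of $N^{+}(x)\cap N^{-}(y)$ with the $\mu$ vertices of $N^{+}(y)\cap N^{-}(x)$, using that each vertex has exactly $k-t$ pure out-neighbours and $k-t$ pure in-neighbours. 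Your proposed interlacing approach would also succeed, but the elementary neighbourhood count is shorter and is the route taken in \cite{A}.
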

\begin{proposition}(see \cite{A})\label{p-2}
A directed strongly regular graph with parameters $( n,k,\mu ,\lambda ,t)$ has three distinct integer eigenvalues
$$ k>\rho =\frac{1}{2}\left( -\left( \mu -\lambda  \right)+d \right)>\sigma =\frac{1}{2}\left( -\left( \mu -\lambda  \right)-d \right), $$
The multiplicities are
$$  1,r=-\frac{k+\sigma \left( n-1 \right)}{\rho -\sigma },s=\frac{k+\rho \left( n-1 \right)}{\rho -\sigma },~~~\eqno(1)$$
respectively.
\end{proposition}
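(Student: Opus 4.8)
The plan is to read everything off the adjacency-matrix characterization in (ii), namely $A^2 = tI + \lambda A + \mu(J - I - A)$, together with $JA = AJ = kJ$, and to use Proposition \ref{p-1} freely (in particular that $d$ is a positive integer and that $0\le\lambda<t$, $0<\mu\le t$). First I would observe that the constant row and column sums force $A\mathbf{1}=A^{\mathsf T}\mathbf{1}=k\mathbf{1}$, so the line $L=\langle\mathbf{1}\rangle$ and the hyperplane $W=\mathbf{1}^{\perp}$ are both $A$-invariant and $\mathbb{R}^{n}=L\oplus W$. On $W$ the matrix $J$ acts as the zero map, so restricting (ii) to $W$ gives $(A|_{W})^{2}=(t-\mu)I+(\lambda-\mu)\,A|_{W}$; that is, $A|_{W}$ is annihilated by $p(x)=x^{2}-(\lambda-\mu)x-(t-\mu)$.

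Next I would analyse $p$. Its discriminant is exactly $(\mu-\lambda)^{2}+4(t-\mu)=d^{2}$, which is strictly positive, so $p$ has two distinct real roots, $\rho=\tfrac12\bigl(-(\mu-\lambda)+d\bigr)$ and $\sigma=\tfrac12\bigl(-(\mu-\lambda)-d\bigr)$. Since $A|_{W}$ is killed by the separable polynomial $p$, it is diagonalizable with spectrum contained in $\{\rho,\sigma\}$; adding the action $k\cdot\mathrm{id}$ on $L$, the matrix $A$ is diagonalizable over $\mathbb{R}$ with all eigenvalues in $\{k,\rho,\sigma\}$. Moreover $\rho$ and $\sigma$ are roots of the monic integer polynomial $p$, hence algebraic integers, and since $d\in\mathbb{Z}$ they are rational, so they are integers.

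Then I would pin down the order $k>\rho>\sigma$ and the fact that all three values genuinely occur. From $\rho\sigma=-(t-\mu)\le 0$ (using $\mu\le t$) and $\rho>\sigma$ we get $\rho\ge 0\ge\sigma$. Perron--Frobenius applied to the nonnegative matrix $A$ with constant row sum $k$ gives $|\rho|\le k$, hence $\rho\le k$; and $\rho=k$ would make $k$ a root of $p$, i.e. $k^{2}+(\mu-\lambda)k=t-\mu$, which together with $k(k+(\mu-\lambda))=t+(n-1)\mu$ from Proposition \ref{p-1} forces $\mu=0$, contradicting $\mu>0$. So $k>\rho>\sigma$; in particular $k\notin\{\rho,\sigma\}$, so the $k$-eigenspace of $A$ is exactly $L$ and $k$ has multiplicity $1$. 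Writing $r,s$ for the multiplicities of $\rho,\sigma$ on $W$, one has $r+s=n-1$; also $s=0$ would give $\rho=-k/(n-1)<0$ and $r=0$ would give the integer $\sigma=-k/(n-1)$, forcing $k=n-1$ and hence the complete digraph with $t=k$, which is excluded — so $r,s\ge 1$ and all three eigenvalues are present.

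Finally, since $A$ has zero diagonal (no loops), $\mathrm{tr}(A)=0$ gives $k+\rho r+\sigma s=0$; solving this together with $r+s=n-1$ yields $r=-\dfrac{k+\sigma(n-1)}{\rho-\sigma}$ and $s=\dfrac{k+\rho(n-1)}{\rho-\sigma}$, which is formula (1). The part that needs care — rather than the linear bookkeeping at the end — is the first half: because $A$ is not symmetric one cannot invoke the spectral theorem, so diagonalizability must be extracted from the fact that the minimal polynomial of $A|_{W}$ divides the separable polynomial $p$ (which in turn uses $d\neq 0$), and the simplicity of the eigenvalue $k$ comes from the strict separation $\rho<k$ rather than from any connectivity assumption.
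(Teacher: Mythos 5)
Your proof is correct, and since the paper itself offers no argument for this proposition (it is quoted from Duval's paper \cite{A}), the right benchmark is the classical proof there, which yours reproduces essentially verbatim: restrict $A$ to the invariant hyperplane $\mathbf{1}^{\perp}$, where $J$ vanishes, so that $A|_{\mathbf{1}^{\perp}}$ is annihilated by the separable quadratic $x^{2}-(\lambda-\mu)x-(t-\mu)$ with roots $\rho,\sigma$, then get $k>\rho$ from Perron--Frobenius together with the parameter identity, and the multiplicities from $r+s=n-1$ and $\operatorname{tr}(A)=0$. Your use of Proposition \ref{p-1} for the integrality of $d$ (hence of $\rho,\sigma$) is legitimate given the order in which the paper states these facts, and your extra care in checking $r,s\ge 1$ and the diagonalizability of the non-symmetric matrix $A$ is exactly where the argument needs it.
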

\begin{proposition}(see \cite{A})\label{p-3}
If $G$ is a DSRG with parameters $( n,k,\mu ,\lambda ,t)$, then the complementary $G'$ is also a DSRG with parameters $(n',k',\mu',\lambda' ,t')$, where $k'=(n-2k)+(k-1)$, $\lambda'=(n-2k)+(\mu-2)$, $t'=(n-2k)+(t-1)$, $\mu'=(n-2k)+\lambda$.
\end{proposition}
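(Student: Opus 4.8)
The plan is to argue entirely at the level of adjacency matrices, using the characterization recalled in the introduction: a digraph is a DSRG with parameters $(n,k,\mu,\lambda,t)$ precisely when its adjacency matrix $A$ satisfies $JA=AJ=kJ$ and $A^{2}=tI+\lambda A+\mu(J-I-A)$. Write $A=A(G)$ and $A'=A(G')$. The first step is to observe that for a loopless digraph the complement has adjacency matrix $A'=J-I-A$; since $A$ is a $0$--$1$ matrix with zero diagonal, so is $A'$, hence $G'$ is again a digraph, and because $JA=AJ=kJ$ the four matrices $I$, $J$, $A$, $A'$ commute pairwise, which is what makes the manipulations below legitimate.

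Next I would check condition (i) for $A'$. From $A'=J-I-A$ one gets $JA'=nJ-J-kJ=(n-1-k)J$ and likewise $A'J=(n-1-k)J$, so $G'$ is regular of degree $k'=n-1-k=(n-2k)+(k-1)$, as claimed.

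The main computation is condition (ii). First rewrite the hypothesis in the reduced form $A^{2}=(t-\mu)I+(\lambda-\mu)A+\mu J$. Then expand
\[(A')^{2}=(J-I-A)^{2}=(n-2-2k)J+I+2A+A^{2},\]
substitute the reduced expression for $A^{2}$ and collect terms to obtain
\[(A')^{2}=(1+t-\mu)\,I+(2+\lambda-\mu)\,A+(n-2-2k+\mu)\,J.\]
Finally substitute $A=J-I-A'$ and regroup so that the right-hand side takes the shape $t'I+\lambda'A'+\mu'(J-I-A')$; comparing the coefficients of $I$, $A'$ and $J$ then forces
\[\mu'=(n-2k)+\lambda,\qquad \lambda'=(n-2k)+(\mu-2),\qquad t'=(n-2k)+(t-1),\]
together with the value of $k'$ found above. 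Since (i) and (ii) both hold for $A'$, the digraph $G'$ is a DSRG with the stated parameters.

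I do not anticipate a genuine obstacle: the argument is a direct verification, and the only thing needing care is the sign and index bookkeeping when passing between $A$ and $A'=J-I-A$, in particular keeping the term $\mu(J-I-A)$ handled consistently on both sides of (ii). As an optional sanity check one may recompute $t'$ combinatorially: a vertex $v$ of $G$ lies on $t$ digons, is the tail of $k-t$ non-reciprocated arcs and the head of $k-t$ non-reciprocated arcs, hence is joined to $n-1-t-2(k-t)=(n-2k)+(t-1)$ of the remaining vertices by no arc at all, and these are exactly the digons through $v$ in $G'$; analogous (if lengthier) path counts reproduce $\lambda'$ and $\mu'$, corroborating the algebraic computation.
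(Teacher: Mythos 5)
Your verification is correct: the complement has adjacency matrix $A'=J-I-A$, and your coefficient bookkeeping in $(A')^2=(n-2-2k+\mu)J+(1+t-\mu)I+(2+\lambda-\mu)A$ does yield exactly $k'=n-1-k$, $\mu'=(n-2k)+\lambda$, $\lambda'=(n-2k)+(\mu-2)$, $t'=(n-2k)+(t-1)$. The paper itself states this proposition without proof, citing Duval, and your direct matrix argument is essentially the standard proof given there, so there is nothing further to reconcile.
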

We note that, although two DSRGs have the same parameters, they may not be isomorphic.

There are many constuction methods for DSRG. Duval described some methods including constructions using quadratic residue, block construction of permutation matrices and the Kronecker product \cite{A}. In addition, some of known constructions use combinatorial block designs \cite{FI}, coherent algebras\cite{FI}, semidirect product \cite{D}, finite geometries \cite{F,FI}, matrices\cite{K1}, block matrices\cite{AD}, finite incidence structures \cite{OL,BR} and Cayley graph\cite{HO,FI} or generalized Cayley graph \cite{FE}.

In section 2, we construct 3 families of DSRGs by using Kronecker matrix product.
We construct DSRGs with parameters $(4n^2,2n^2-2,n^2-1,n^2-3,n^2-1)$, $(4n^2,2n^2,n^2+1,n^2-1,n^2+1)$ for odd integer $n$ and $(4n^2,2n^2,n^2+4,n^2-4,n^2+4)$ for integer $4|n$.

In section 3,\;by using \emph{semidirect product},\;we construct 4 families of DSRGs with parameters:\\ $(1)(pn,vn,\frac{n}{p-1}v^2,\frac{n}{p-1}v(v-1),\frac{n}{p-1}v^2)$ for $1\leq v\leq p-2$,\\
$(2)(pn,n(v+1)-1,\frac{n}{p-1}v(v+1),n-2+\frac{n}{p-1}v^2,n-1+\frac{n}{p-1}v^2)$ for $1\leq v\leq p-2$;\\ $(3)(mq,m+q-2,\frac{m-1}{q}+1,\frac{m-1}{q}+q-2,\frac{m-1}{q}+q-1)$;\\
$(4)(p^{2}n, p(p-1)n, n((p-1)^2+1), n\frac{(p-1)^3-1}{p-1}, n((p-1)^2+1))$.

In section 4, we give a sufficient and necessary condition of Cayley coset graph to be DSRG with parameters $(n,k,\mu,\lambda,t)$ in terms of the group ring.

In section 5, we focus on the Cayley graph $\mathcal{C}(G,S)$ where the group $G$ is two semidirect product of two cyclic group. By using group representation theory, we can determine the eigenvalues and minimum polynomial of the adjacent matrix of Cayley graph, so we can give some sufficient and necessary condition of Cayley graphs $\mathcal{C}{(C_n\rtimes_{\theta(k)} C_m,A'\times C_m)}$ and $\mathcal{C}{(C_n\rtimes_{\theta(k)} C_m,(A'+e_A)\times C_m\setminus e_A)}$ to be DSRG,\;where $A'$ is a proper subset of $C_n\setminus e $ ($e$ is the identity element of group $C_n$). This suggests that the group representation theory can be used to investigate DSRG.

In section 6, we make a discussion of the vertices which have the same out-neighbour set (in-neighbours set) in DSRG. Meanwhile,\;we also give an upper bound of the order of automorphism group of directed strongly regular Cayley graph.
\section{Constructions of DSRG by using Kronecker matrix product}
In this section, we construct directed strongly regular graphs from known DSRGs in terms of Kronecker matrix
product. Duval \cite{A} observed that if $t=\mu$ and $A$  satisfies above equations (i) and (ii),
then so does $A\otimes J_m$ for every positive integer $m$; and so, we have:
\begin{proposition}{(see \cite{A})}\label{p-4}
If there exists a directed strongly regular graph with parameters $( n,k,\mu ,\lambda ,t)$ with $t=\mu$, then for each positive integer m there exists a directed strongly regular graph with parameters $( mn,mk,m\mu ,m\lambda ,mt)$
\end{proposition}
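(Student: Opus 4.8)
The plan is to verify directly that if $A$ is the adjacency matrix of a DSRG with parameters $(n,k,\mu,\lambda,t)$ and $t=\mu$, then $B := A\otimes J_m$ satisfies the two matrix conditions (i) $JB = BJ = (mk)J$ and (ii) $B^2 = (mt)I + (m\lambda)B + (m\mu)(J - I - B)$ with respect to the identity and all-ones matrices of size $mn$. First I would record the elementary Kronecker-product identities I intend to use: $(X\otimes Y)(Z\otimes W) = (XZ)\otimes(YW)$, the fact that $J_{mn} = J_n \otimes J_m$ and $I_{mn} = I_n\otimes I_m$, and $J_m^2 = m J_m$, $J_n A = A J_n = kJ_n$.

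For condition (i), I would compute $J_{mn} B = (J_n\otimes J_m)(A\otimes J_m) = (J_n A)\otimes (J_m J_m) = (kJ_n)\otimes(mJ_m) = mk\,(J_n\otimes J_m) = mk\,J_{mn}$, and symmetrically $B J_{mn} = mk\,J_{mn}$. For condition (ii), the key computation is $B^2 = (A\otimes J_m)(A\otimes J_m) = A^2 \otimes J_m^2 = A^2\otimes(mJ_m) = m\,(A^2\otimes J_m)$. Now substitute $A^2 = tI_n + \lambda A + \mu(J_n - I_n - A)$ and distribute the Kronecker product over this sum: $A^2\otimes J_m = t\,(I_n\otimes J_m) + \lambda\,(A\otimes J_m) + \mu\,((J_n - I_n - A)\otimes J_m)$. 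The only subtlety is that the term $I_n\otimes J_m$ is not the identity $I_{mn} = I_n\otimes I_m$, so I would rewrite $I_n\otimes J_m = I_n\otimes I_m + I_n\otimes(J_m - I_m) = I_{mn} + (I_n\otimes(J_m - I_m))$; the extra piece $I_n\otimes(J_m-I_m)$ is precisely where the hypothesis $t=\mu$ must be used, since it will have to cancel against a matching piece coming from the $\mu(J_n - I_n - A)\otimes J_m$ term once one also rewrites $J_n\otimes J_m = J_{mn}$ and $(I_n + A)\otimes J_m$ against $I_{mn} + B$ plus correction terms.

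Concretely, I would expand everything in terms of the four matrices $I_{mn}$, $B = A\otimes J_m$, $J_{mn}$, and the "defect" matrix $E := I_n\otimes(J_m - I_m)$, collect coefficients, and observe that when $t = \mu$ the coefficient of $E$ vanishes: the $tE$ contribution from $t(I_n\otimes J_m)$ and the $-\mu E$ contribution hidden in $\mu(J_n - I_n - A)\otimes J_m$ (after replacing $I_n\otimes J_m$ there as well) sum to $(t-\mu)E = 0$. What survives is exactly $m\bigl(tI_{mn} + \lambda B + \mu(J_{mn} - I_{mn} - B)\bigr)$, which upon comparing with the target is $(mt)I_{mn} + (m\lambda)B + (m\mu)(J_{mn} - I_{mn} - B)$, as required. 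Finally I would note that $B$ is a $0$--$1$ matrix with zero diagonal (since $A$ has zero diagonal and $J_m$ has all entries $1$, the diagonal blocks of $B$ are $0\cdot J_m = 0$), so $B$ genuinely is the adjacency matrix of a digraph on $mn$ vertices, and the new parameters $(mn, mk, m\mu, m\lambda, mt)$ are read off directly. The main obstacle is purely bookkeeping: keeping the distinction between $I_n\otimes J_m$ and $I_{mn}$ straight and seeing that $t=\mu$ is exactly the condition that kills the leftover $I_n\otimes(J_m-I_m)$ term; there is no conceptual difficulty beyond that.
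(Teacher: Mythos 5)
Your verification is correct and is essentially the paper's own route: the paper simply invokes Duval's observation that when $t=\mu$ the matrix $A\otimes J_m$ again satisfies the defining equations (i) and (ii), which is exactly the computation you carry out (your bookkeeping with $E=I_n\otimes(J_m-I_m)$ is just a slightly longer way of noting that $t=\mu$ reduces $A^2$ to $\mu J_n+(\lambda-\mu)A$ before tensoring). No gaps; nothing further needed.
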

Duval \cite{A} also observed that if there exists a DSRG with parameters $( n,k,\mu ,\lambda ,t)$ with $t=\lambda+1$, then digraph corresponding to adjacency matrix $A\otimes J_m+I_n\otimes(J_m-I_m)$ is a DSRG  with parameters
$(mn,m{k+1}-1,m\mu,m(t+1)-2,m(t+1)-1)$.
\begin{dfn}\label{d-1}
Let $A=(a_{ij})_{m\times n}$ and $B=(b_{ij})_{t\times s}$ are two matrices over number field $\mathbb{K}$, then the Kronecker matrix
product of $A$ and $B$ is defined by
\[A\otimes B=
\left(\begin{array}{cccc}
a_{11}B & a_{12}B& \cdots&a_{1n}B\\
a_{21}B & a_{22}B& \cdots&a_{2n}B\\
\vdots& \vdots& \ddots&\vdots\\
a_{m1}B & a_{m2}B& \cdots&a_{mn}B\\
\end{array}\right)
.\]
\end{dfn}
Let $D(A)$ denote with the digraph respect to adjacent matrix $A$.
\begin{thm}\label{t-1}
Let $D$ be a directed strongly regular graph with the parameters $(n,k,\mu,\lambda,t)$ satisfies $t=\mu$ and  $4k=n+2\lambda+2\mu$. Let $A=A(D)$ be the adjacent matrix of $D$,and $B=(J-A)\otimes{A}+A\otimes(J-A)$. Then $D(B)$ is a directed strongly regular graph with the parameters
\[(n^2,2k(n-k),2(k^2-2\mu\lambda),2(k^2-\lambda^2-\mu^2),2(k^2-2\mu\lambda)).\]
\end{thm}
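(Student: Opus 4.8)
The plan is to check Duval's two matrix conditions for $B$ directly, using the mixed-product rule $(X\otimes Y)(Z\otimes W)=XZ\otimes YW$. The first step is to rewrite
\[
B=(J-A)\otimes A+A\otimes(J-A)=J\otimes A+A\otimes J-2(A\otimes A).
\]
Indexing the $n^2$ vertices of $D(B)$ by pairs from $\{1,\dots,n\}$, the $\big((i,j),(i',j')\big)$-entry of $B$ equals $A_{ii'}+A_{jj'}-2A_{ii'}A_{jj'}$, which is $1$ exactly when precisely one of $A_{ii'},A_{jj'}$ is $1$; hence $B$ is a $\{0,1\}$-matrix, and it has zero diagonal because $A$ does, so $D(B)$ is a genuine loopless digraph. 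Since $AJ=JA=kJ$ and $J^2=nJ$, one gets at once $BJ_{n^2}=J_{n^2}B=2k(n-k)J_{n^2}$, which is condition (i) with $k'=2k(n-k)$ (and $0<k'\le n^2/2<n^2$ because $0<k<n$, so $D(B)$ is neither empty nor complete).

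Next, I would observe that the claimed parameters have $t'=\mu'=2(k^2-2\mu\lambda)$, so condition (ii) is equivalent to the single identity $B^2=\mu' J_{n^2}+(\lambda'-\mu')B$. Squaring $B=J\otimes A+A\otimes J-2(A\otimes A)$ and applying the mixed-product rule turns every term into a Kronecker product whose factors are among $A^2$, $AJ=JA=kJ$, and $J^2=nJ$; feeding in the $t=\mu$ form of the DSRG relation, $A^2=(\lambda-\mu)A+\mu J$ (which is Duval's identity (ii) specialized to $t=\mu$), reduces each term to a combination of $M:=A\otimes A$, $P:=J\otimes A$, $Q:=A\otimes J$ and $N:=J_{n^2}$. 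With $\alpha:=\lambda-\mu$ the bookkeeping (nine terms, several pairs equal because $AJ=JA$) should give
\[
B^2=4\alpha^2 M+\alpha(n+4\mu-4k)(P+Q)+\big(2\mu n+4\mu^2+2k^2-8\mu k\big)N .
\]

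The decisive point is that the hypothesis $4k=n+2\lambda+2\mu$ is exactly what makes the non-$N$ part of $B^2$ a scalar multiple of $B$: substituting $n=4k-2\lambda-2\mu$ gives $n+4\mu-4k=-2\alpha$, so $4\alpha^2 M+\alpha(n+4\mu-4k)(P+Q)=4\alpha^2M-2\alpha^2(P+Q)=-2\alpha^2\big(P+Q-2M\big)=-2\alpha^2 B$, and the $N$-coefficient collapses to $2k^2-4\mu\lambda=2(k^2-2\mu\lambda)$. Hence $B^2=-2(\lambda-\mu)^2 B+2(k^2-2\mu\lambda)J_{n^2}$, i.e.\ condition (ii) holds with $t'=\mu'=2(k^2-2\mu\lambda)$ and $\lambda'=\mu'-2(\lambda-\mu)^2=2(k^2-\lambda^2-\mu^2)$. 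Together with $n'=n^2$ and $k'=2k(n-k)$ this is precisely the asserted parameter set.

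I expect the only labor to be the term-by-term expansion of $B^2$; the two non-mechanical observations are the clean rewriting $B=J\otimes A+A\otimes J-2(A\otimes A)$ and the recognition that $4k=n+2\lambda+2\mu$ is the relation forcing the computation to close. A final optional check is that the new parameters satisfy $0<t'<k'$, so that $D(B)$ is a proper DSRG rather than a tournament or an undirected SRG.
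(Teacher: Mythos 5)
Your proposal is correct and follows essentially the same route as the paper: verify Duval's two matrix conditions for $B$ by the mixed-product rule, substitute $A^2=\mu J+(\lambda-\mu)A$, and use $4k=n+2\lambda+2\mu$ to make the non-$J\otimes J$ part collapse onto a multiple of $B$, yielding $B^2=2(k^2-2\mu\lambda)J_{n^2}-2(\lambda-\mu)^2B$. The only differences are cosmetic: you expand $B=J\otimes A+A\otimes J-2(A\otimes A)$ while the paper expands $(J-A)^2$, $(J-A)A$, $A(J-A)$ directly, and you add the (welcome but implicit in the paper) check that $B$ is a $\{0,1\}$-matrix with zero diagonal.
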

\begin{proof}
Note that $J^2=nJ$, and $JA=AJ=kJ$. Thus by a simple computation, we can obtain that
\[(J\otimes{J})B=J(J-A)\otimes{JA}+JA\otimes{J(J-A)}=2k(n-k)J\otimes{J},\]
\[B(J\otimes{J})=(J-A)J\otimes{AJ}+AJ\otimes{(J-A)J}=2k(n-k)J\otimes{J},\]
\[B^2=(J-A)^2\otimes{A^2}+A^2\otimes(J-A)^2+2(J-A)A\otimes{A(J-A)}.\tag{2}\]
Note that
\[A^2=tI+\lambda A+\mu(J-I-A)=\mu{J}+(\lambda-\mu)A,\]
\[(J-A)A=A(J-A)=(k-\mu){J}-(\lambda-\mu)A,\]
\[(J-A)^2=J^2+A^2-2kJ=(n-2k+\mu)J+(\lambda-\mu)A.\]
Thus, a simple calculation shows that
$$(J-A)^2\otimes{A^2}=((n-2k+\mu)J+(\lambda-\mu)A)\otimes(\mu{J}+(\lambda-\mu)A)$$
$$\hspace{71pt}=\mu(n-2k+\mu)J\otimes{J}+(n-2k+\mu)(\lambda-\mu)J\otimes{A}$$
\[\hspace{1pt}+\mu(\lambda-\mu)A\otimes{J}+(\lambda-\mu)^2{A\otimes{A}},\tag{3}\]
$${A^2}\otimes{(J-A)^2}=(\mu{J}+(\lambda-\mu)A)\otimes((n-2k+\mu)J+(\lambda-\mu)A)$$
$$\hspace{71pt}=\mu(n-2k+\mu)J\otimes{J}+(n-2k+\mu)(\lambda-\mu)A\otimes{J}$$
\[\hspace{1pt}+\mu(\lambda-\mu)J\otimes{A}+(\lambda-\mu)^2{A\otimes{A}},\tag{4}\]
and
$$2(J-A)A\otimes{A(J-A)}=2((k-\mu){J}-(\lambda-\mu)A)\otimes((k-\mu){J}-(\lambda-\mu)A)$$
\[=2(k-\mu)^2{J}\otimes{J}+2(\lambda-\mu)^2{A\otimes{A}}-2(\lambda-\mu)(k-\mu)(J\otimes{A}+A\otimes{J}).\tag{5}\]
We note that $2\mu(n-2k+\mu)+2(k-\mu)^2=2\mu(2k-2\lambda-\mu)+2(k-\mu)^2=2(k^2-2\mu\lambda)$. Thus from $(2),(3),(4),(5)$,
\[B^2=(2\mu(n-2k+\mu)+2(k-2\mu)^2)J\otimes{J}+4(\lambda-\mu)^2{A\otimes{A}}\hspace{10pt}\]
\[+(\lambda-\mu)(n-2k+2\mu-2k+2\mu)(J\otimes{A}+A\otimes{J})\]
\[\hspace{4pt}=2(k^2-2\mu\lambda)J\otimes{J}+(\lambda-\mu)(2n-8k+4\mu+4\lambda)A\otimes{A}\]
\[\hspace{7pt}+(\lambda-\mu)(n-4k+4\mu)((J-A)\otimes{A}+A\otimes(J-A))\]
\[=2(k^2-2\mu\lambda)J\otimes{J}+(\lambda-\mu)(n-4k+4\mu)B.\hspace{42pt}\]

  Hence $D(B)$ is a DSRG, let the parameters of $D(B)$ be $(n_1,k_1,\mu_1,\lambda_1,t_1)$, then $n_1=n^2,k_1=2k(n-k),\mu_1=t_1=2(k^2-2\mu\lambda),\lambda_1=2(k^2-2\mu\lambda)+(\lambda-\mu)(n-4k+4\mu)=2(k^2-2\mu\lambda)-2(\lambda-\mu)^2=2(k^2-\lambda^2-\mu^2)$.\;This completes the proof.
\end{proof}
\begin{lem}{(see \cite{JG},\cite{MA})}\label{l-2}
If $(n,k,\mu,\lambda,t)$ are the parameters of a DSRG with $t=\mu$ and rank $r$ and with $\frac{k}{n}=\frac{a}{b}$, where $a$ and $b$ are relatively prime integers, then
$$( n,k,\mu ,\lambda ,t)=( \frac{(r-1){{b}^{2}}}{c}m,\frac{(r-1)ab}{c}m,\frac{r{{a}^{2}}}{c}m,\frac{(ar-b)a}{c}m,\frac{r{{a}^{2}}}{c}m )$$
for some positive integer $m$, where $c$ is the greatest common divisor
$$c=\gcd (ab,r{{a}^{2}},(r-1){{b}^{2}}). $$
\end{lem}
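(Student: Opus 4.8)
The plan is to use the hypothesis $t=\mu$ to pin down the spectrum of the adjacency matrix $A$, then to read off $\mu$ and $\lambda$ as rational functions of $n$, $k$ and the rank $r$ by means of Propositions~\ref{p-1} and~\ref{p-2}, and finally to run an elementary divisibility argument to decide which integer parameter vectors can occur.

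First I would record what $t=\mu$ forces on the eigenvalues. Since $0\le\lambda<t=\mu$ we have $\mu-\lambda>0$, hence $d=\sqrt{(\mu-\lambda)^{2}+4(t-\mu)}=\mu-\lambda$ and, by Proposition~\ref{p-2}, the eigenvalues are $k$, $\rho=0$ and $\sigma=\lambda-\mu\ne 0$. As $JA=AJ=kJ$, the matrix $A$ preserves $\mathbf 1^{\perp}$, where the defining identity collapses to $A^{2}=(\lambda-\mu)A$; thus the minimal polynomial of $A$ divides $(x-k)\,x\,(x-(\lambda-\mu))$, which has simple roots, so $A$ is diagonalizable and $r=\mathrm{rank}(A)$ is the number of nonzero eigenvalues counted with multiplicity. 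Using the multiplicity of $\sigma$ given by Proposition~\ref{p-2}, namely $\dfrac{k+\rho(n-1)}{\rho-\sigma}=\dfrac{k}{\mu-\lambda}$ (equivalently $\mathrm{tr}\,A=0$), this reads $r=1+\dfrac{k}{\mu-\lambda}$, whence the key relation
\[
\mu-\lambda=\frac{k}{r-1}\qquad(\text{in particular }r\ge 2).
\]

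Next I would solve for the parameters. Feeding $t=\mu$ into the identity of Proposition~\ref{p-1} gives $k\bigl(k+\mu-\lambda\bigr)=n\mu$, and together with $\mu-\lambda=k/(r-1)$ this yields $\mu=t=\dfrac{rk^{2}}{n(r-1)}$ and $\lambda=\mu-\dfrac{k}{r-1}=\dfrac{k(rk-n)}{n(r-1)}$. Writing $k/n=a/b$ in lowest terms, there is a positive integer $\ell$ with $n=b\ell$, $k=a\ell$, and substitution turns the parameter vector into
\[
\Bigl(b\ell,\ a\ell,\ \tfrac{ra^{2}\ell}{b(r-1)},\ \tfrac{a(ar-b)\ell}{b(r-1)},\ \tfrac{ra^{2}\ell}{b(r-1)}\Bigr).
\]
Since all five entries are integers, the only surviving requirement is that $b(r-1)$ divide both $ra^{2}\ell$ and $a(ar-b)\ell$; because $\gcd\bigl(a^{2}r,\,a^{2}r-ab\bigr)=\gcd\bigl(a^{2}r,ab\bigr)$, these two divisibilities are jointly equivalent to $\ell$ being a multiple of $\dfrac{b(r-1)}{g}$ with $g=\gcd\bigl(b(r-1),\,ra^{2},\,ab\bigr)$, and conversely every such $\ell$ works. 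Putting $\ell=\dfrac{b(r-1)}{g}\,m$ and substituting back reproduces exactly the asserted formulas, provided $g$ equals $c=\gcd\bigl(ab,\,ra^{2},\,(r-1)b^{2}\bigr)$.

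The one step that is not pure bookkeeping, and so the main obstacle, is the arithmetic identity $\gcd\bigl(b(r-1),ra^{2},ab\bigr)=\gcd\bigl(b^{2}(r-1),ra^{2},ab\bigr)$. I would settle it by comparing $p$-adic valuations prime by prime: if $p\nmid b$ then changing $b$ to $b^{2}$ is immaterial; if $p\mid b$ then $p\nmid a$ because $\gcd(a,b)=1$, and since $r$ and $r-1$ are coprime at most one of them is divisible by $p$, after which a two-case check on $v_{p}(r)$ shows the minimum defining the $\gcd$ is unchanged by replacing $b$ with $b^{2}$. This gives $g=c$, hence $\ell=\dfrac{(r-1)b}{c}m$, and the claimed parameter vector follows.
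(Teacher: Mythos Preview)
The paper does not prove this lemma; it is quoted from the references \cite{JG} and \cite{MA} and used as a black box in the proof of Theorem~\ref{t-3}. There is therefore no ``paper's own proof'' to compare against.

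That said, your argument is correct and self-contained. The spectral reduction ($t=\mu$ forces $\rho=0$, $\sigma=\lambda-\mu$, and $A$ diagonalizable, so $\mathrm{rank}(A)=1+k/(\mu-\lambda)$) is right, and the identity $k(k+\mu-\lambda)=n\mu$ from Proposition~\ref{p-1} then pins down $\mu$ and $\lambda$ in terms of $n,k,r$ as you wrote. The integrality analysis is also sound: after writing $n=b\ell$, $k=a\ell$, the two divisibility constraints $b(r-1)\mid ra^{2}\ell$ and $b(r-1)\mid a(ar-b)\ell$ are, via $\gcd(ra^{2},a^{2}r-ab)=\gcd(ra^{2},ab)$, equivalent to $b(r-1)\mid\gcd(ra^{2},ab)\,\ell$, i.e.\ $\ell$ is a multiple of $b(r-1)/g$ with $g=\gcd(b(r-1),ra^{2},ab)$. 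Your prime-by-prime verification that $g=c=\gcd((r-1)b^{2},ra^{2},ab)$ is clean: since $v_p(b^{2}(r-1))\ge v_p(b(r-1))$ one has $v_p(c)\ge v_p(g)$ automatically, and when $p\mid b$ (so $p\nmid a$) the inequality $\min(v_p(r),v_p(b))\le v_p(b)+v_p(r-1)$ holds because $r$ and $r-1$ cannot both be divisible by $p$. Substituting $\ell=b(r-1)m/c$ recovers the stated parameter vector.
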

We now give the possible parameters of DSRGs with $t=\mu$, and $4k=n+2\lambda+2\mu$.
\begin{thm}\label{t-3}
If $( n,k,\mu ,\lambda ,t)$ are the parameters of a DSRG with $t=\mu$, $4k=n+2\lambda+2\mu$, then for positive integer $m$, the possible parameters have just 4 classes: \\
(1)$(4(r-1)m,2(r-1)m,rm,(r-2)m,rm)$ for odd integer $r$;\\
(2)$(2(r-1)m,(r-1)m,\frac{r}{2}m,(\frac{r}{2}-1)m,\frac{r}{2}m)$ for even integer $r$;\\
(3)$(2rm,(r-1)m,\frac{r-1}{2}m,\frac{r-3}{2}m,\frac{r-1}{2}m)$ for odd integer $r$;\\
(4)$(4rm,2(r-1)m,(r-1)m,(r-3)m,(r-1)m)$ for even integer $r$.
\end{thm}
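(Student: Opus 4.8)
The plan is to start from Lemma \ref{l-2}, which already parametrizes \emph{every} parameter tuple of a DSRG with $t=\mu$ in terms of the rank $r$, the reduced ratio $a/b = k/n$ (so $\gcd(a,b)=1$, $0<a<b$), and $c = \gcd(ab,\,ra^2,\,(r-1)b^2)$; then to intersect this family with the hyperplane $4k = n+2\lambda+2\mu$ and record exactly which triples $(a,b,r)$ survive. Substituting the five coordinates of Lemma \ref{l-2} into $4k = n+2\lambda+2\mu$, the common factor $m/c$ cancels and, using $2(ar-b)a + 2ra^2 = 4a^2 r - 2ab$, the condition collapses to the single Diophantine relation
$$(r-1)b^2 - 2(2r-1)ab + 4ra^2 = 0.$$

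Next I would read this as a quadratic in $b$ with $a,r$ as parameters. Its discriminant is $4a^2\bigl[(2r-1)^2 - 4r(r-1)\bigr] = 4a^2$, a perfect square, so
$$b \;=\; \frac{(2r-1\pm 1)\,a}{r-1} \;\in\; \left\{\, 2a,\ \frac{2ra}{r-1} \,\right\}.$$
Imposing $\gcd(a,b)=1$: the branch $b=2a$ forces $a=1$, $b=2$. For the branch $b = 2ra/(r-1)$, the identity $b(r-1)=2ar$ together with $\gcd(a,b)=1$ forces $a\mid r-1$; writing $r-1=a\ell$ gives $b = 2r/\ell$ with $\ell\mid r-1$ and $\ell\mid 2r$, hence $\ell\mid\gcd(r-1,2r)$, which divides $2$, so $\ell\in\{1,2\}$, yielding $(a,b)=(r-1,\,2r)$ or $(a,b)=(\tfrac{r-1}{2},\,r)$.

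Finally, in each of the three surviving cases I would compute $c=\gcd(ab,\,ra^2,\,(r-1)b^2)$ and substitute back into the formula of Lemma \ref{l-2}. The value of $c$, and hence the shape of the tuple, is controlled by the parity of $r$: for $(a,b)=(1,2)$ one gets $c=1$ when $r$ is odd (giving family (1)) and $c=2$ when $r$ is even (giving family (2)); the branch $(a,b)=(r-1,2r)$ is compatible with $\gcd(a,b)=1$ only for even $r$, where $c=r(r-1)$ and one obtains family (4); the branch $(a,b)=(\tfrac{r-1}{2},r)$ requires $r$ odd, where $c=\tfrac{r(r-1)}{2}$ and one obtains family (3). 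A direct check that each listed tuple satisfies $t=\mu$ and $4k=n+2\lambda+2\mu$ closes the argument.

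I expect the only genuine effort to lie in this last step: the three separate $\gcd$ evaluations and the accompanying parity case-split are elementary but error-prone, so the care needed is in correctly tracking which parities of $r$ are admissible in each branch and in simplifying the fractions $\tfrac{(r-1)b^2}{c}m$, $\tfrac{(r-1)ab}{c}m$, etc., down to the clean forms listed in the statement.
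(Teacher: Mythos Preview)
Your proposal is correct and follows essentially the same route as the paper: both start from Lemma~\ref{l-2}, reduce the constraint $4k=n+2\lambda+2\mu$ to the same Diophantine relation in $a,b,r$, extract the two branches $b=2a$ and $b(r-1)=2ar$, and then split on the parity of $r$ to evaluate $c$ and read off the four families. The only cosmetic differences are that the paper factors the relation as $r(b-2a)^2=b(b-2a)$ rather than applying the quadratic formula, and in the second branch it case-splits on the parity of $b$ (using $b\mid 2r$) where you case-split on $\ell\in\{1,2\}$ (using $a\mid r-1$); these are dual bookkeeping choices leading to the same subcases.
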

\begin{proof}
From Lemma \ref{l-2}, let $n=\frac{(r-1){{b}^{2}}}{c}m$, $k=\frac{(r-1)ab}{c}m$, $\mu=t=\frac{r{{a}^{2}}}{c}m$, $\lambda=\frac{(ar-b)a}{c}m$. Then $4k=n+2\lambda+2\mu$ implies that
$$4\frac{(r-1)ab}{c}m=\frac{(r-1){{b}^{2}}}{c}m+2\frac{r{{a}^{2}}}{c}m+2\frac{(ar-b)a}{c}m,$$
then $4(r-1)ab=(r-1){{b}^{2}}+2 r{{a}^{2}}+2(ar-b)a$. Thus
\[r(b-2a)^2=b(b-2a).\tag{6}\]
\textbf{Case 1}. If $b-2a=0$, then $c=\gcd (2a^2,r{{a}^{2}},4(r-1){a^{2}})=a^2\gcd(2,r)$ and $n=\frac{4 (r-1)}{(2,r)}m,k=\frac{2 (r-1)}{(2,r)}m,\mu=\frac{r}{(2,r)}m,\lambda=\frac{r-2}{(2,r)}m,t=\frac{r}{(2,r)}m$.\;If $2\nmid r$, then $(n,k,\mu,\lambda,t)=(4(r-1)m,2(r-1)m,rm,(r-2)m,rm)$.\;If $2|r$, then $(n,k,\mu,\lambda,t)=(2(r-1)m,(r-1)m,\frac{r}{2}m,(\frac{r}{2}-1)m,\frac{r}{2}m)$. \\
\textbf{Case 2}. If $b-2a\neq0$, then $r(b-2a)=b$, $b(r-1)=2ar$, $b\mid 2ar$. Since $gcd(a,b)=1$, we can obtain that $b|2r$.

If $2\nmid b$, then $b|r$. Hence $b-2a=1$, $r=b=2a+1$ and $c=\gcd(a(2a+1),(2a+1)a^2,2a(2a+1)^2)=a(2a+1)=\frac{r(r-1)}{2}$. Thus\; $(n,k,\mu,\lambda,t)=( \frac{(r-1){{b}^{2}}}{c}m,\frac{(r-1)ab}{c}m,$\\$\frac{r{{a}^{2}}}{c}m,\frac{(ar-b)a}{c}m,\frac{r{{a}^{2}}}{c}m )=(2rm,(r-1)m,\frac{r-1}{2}m,\frac{r-3}{2}m,\frac{r-1}{2}m)$.

If $2\mid b$, let $b=2b'$, then $b'|r$. By equation (6), we have $r(b'-a)=b'$. Thus $b'-a=1$, $b=2(a+1)$ and $r=a+1$. Then $c=\gcd(2a (a+1),(a+1)a^2,4a(a+1)^2)=a(a+1)gcd(2,a,4(a+1))=a(a+1)=r(r-1)$, so\;$(n,k,\mu,\lambda,t)=(4rm,2(r-1)m,(r-1)m,(r-3)m,(r-1)m)$.
\end{proof}
\begin{rmk}\label{r-1}
These possible parameters can be rewritten as\\
$(R1)$ $(2nm,nm,(\frac{n}{2}+1)m,(\frac{n}{2}-1)m,(\frac{n}{2}+1)m)$ for integer $4|n$ ($2(r-1)$ is replaced by $n$);\\
$(R2)$$(2nm,nm,\frac{n+1}{2}m,\frac{n-1}{2}m,\frac{n+1}{2}m)$ for odd integer $n$($r-1$ is replaced by $n$);\\
$(R3)$$(2nm,(n-1)m,\frac{n-1}{2}m,\frac{n-3}{2}m,\frac{n-1}{2}m)$ for odd integer $n$($r$ is replaced by $n$);\\
$(R4)$$(2nm,(n-2)m,(\frac{n}{2}-1)m,(\frac{n}{2}-3)m,(\frac{n}{2}-1)m)$ integer $4|n$ ($2r$ is replaced by $n$).
\end{rmk}

For odd $n$, the DSRG with parameters $(2n,n-1,\frac{n-1}{2},\frac{n-3}{2},\frac{n-1}{2})$ corresponding to case $(R3)$ in Remark \ref{r-1} is realizable which was constructed in \cite{K1} by using Cayley graph.\;Then from the Proposition \ref{p-3}, the complementary graph of $(2n,n-1,\frac{n-1}{2},\frac{n-3}{2},\frac{n-1}{2})-$DSRG is a DSRG with parameters $(2n,n,\frac{n+1}{2},\frac{n-1}{2},\frac{n+1}{2})$ which corresponding to case $(R2)$.

For even $n$, the DSRG with parameters $(2n,n-1,\frac{n}{2}-1,\frac{n}{2}-1,\frac{n}{2})$  was constructed in \cite{HO}. Then for integer $4|n$, the complementary graph of it is a DSRG with parameters $(2n,n,\frac{n}{2}+1,\frac{n}{2}-1,\frac{n}{2}+1)$ which corresponding to case $(R1)$. Thus we have following 3 Theorems.
\begin{thm}\label{t-4}
For odd integer $n$, there exist directed strongly regular graph with the parameters $(4n^2,2n^2-2,n^2-1,n^2-3,n^2-1)$.
\end{thm}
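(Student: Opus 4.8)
The plan is to realize the claimed graph as the image, under the construction of Theorem~\ref{t-1}, of the small DSRG recalled immediately before the statement. For odd $n$ there is a DSRG $D_0$ with parameters $(2n,\,n-1,\,\tfrac{n-1}{2},\,\tfrac{n-3}{2},\,\tfrac{n-1}{2})$, constructed in~\cite{K1} as a Cayley graph (this is case $(R3)$ of Remark~\ref{r-1}). Note that oddness of $n$ is precisely what makes $\tfrac{n-1}{2}$ and $\tfrac{n-3}{2}$ integers, so $D_0$ genuinely exists; take $A=A(D_0)$.

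First I would verify that $D_0$ satisfies the two hypotheses of Theorem~\ref{t-1}. Writing its parameters as $(N,K,\mu_0,\lambda_0,t_0)=(2n,\,n-1,\,\tfrac{n-1}{2},\,\tfrac{n-3}{2},\,\tfrac{n-1}{2})$, one has $t_0=\mu_0=\tfrac{n-1}{2}$, so the condition $t=\mu$ holds; and $N+2\lambda_0+2\mu_0 = 2n+(n-3)+(n-1) = 4n-4 = 4K$, so the condition $4k=n+2\lambda+2\mu$ (in the notation of Theorem~\ref{t-1}) holds as well.

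Then Theorem~\ref{t-1} applies with $B=(J-A)\otimes A + A\otimes(J-A)$, giving a DSRG $D(B)$ on $N^2=4n^2$ vertices with parameters $\bigl(N^2,\;2K(N-K),\;2(K^2-2\mu_0\lambda_0),\;2(K^2-\lambda_0^2-\mu_0^2),\;2(K^2-2\mu_0\lambda_0)\bigr)$. It then remains to substitute and simplify: $2K(N-K)=2(n-1)(n+1)=2n^2-2$; $K^2-2\mu_0\lambda_0=(n-1)^2-\tfrac{(n-1)(n-3)}{2}=\tfrac{(n-1)(n+1)}{2}$, so the third and fifth parameters are $n^2-1$; and $K^2-\lambda_0^2-\mu_0^2=(n-1)^2-\tfrac{(n-3)^2+(n-1)^2}{4}=\tfrac{n^2-3}{2}$, so the fourth parameter is $n^2-3$. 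This yields exactly $(4n^2,\,2n^2-2,\,n^2-1,\,n^2-3,\,n^2-1)$.

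There is no real obstacle here: once the base graph of~\cite{K1} is invoked, the proof is a direct application of Theorem~\ref{t-1} followed by elementary algebra. The only point worth double-checking is that the non-automatic hypothesis $4k=n+2\lambda+2\mu$ does hold for this particular family — which, as shown above, it does.
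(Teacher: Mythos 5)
Your proof is correct and follows exactly the paper's route: invoke the $(2n,\,n-1,\,\tfrac{n-1}{2},\,\tfrac{n-3}{2},\,\tfrac{n-1}{2})$-DSRG from \cite{K1} and apply Theorem~\ref{t-1}. You additionally verify the hypotheses $t=\mu$ and $4k=n+2\lambda+2\mu$ and carry out the parameter arithmetic explicitly, which the paper leaves implicit; all of these checks are accurate.
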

\begin{proof}
A DSRG with parameters$(2n,n-1,\frac{n-1}{2},\frac{n-3}{2},\frac{n-1}{2})$ was constructed, then Theorem \ref{t-4} follows from Theorem \ref{t-1}.
\end{proof}

\begin{thm}\label{t-5}
For odd integer $n$, there exist directed strongly regular graph with the parameters $(4n^2,2n^2,n^2+1,n^2-1,n^2+1)$.
\end{thm}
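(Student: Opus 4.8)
The plan is to follow the same two–move strategy used for Theorem~\ref{t-4}: first exhibit a ``small'' DSRG whose parameters fit the hypotheses of Theorem~\ref{t-1}, then apply that theorem. The small graph here is the one with parameters $(2n,n,\frac{n+1}{2},\frac{n-1}{2},\frac{n+1}{2})$, i.e.\ case $(R2)$ of Remark~\ref{r-1} with $m=1$. For odd $n$ this graph exists: by \cite{K1} there is a $(2n,n-1,\frac{n-1}{2},\frac{n-3}{2},\frac{n-1}{2})$-DSRG, and applying Proposition~\ref{p-3} to it --- with $n-2k = 2n-2(n-1) = 2$ --- yields a DSRG with $k'=n$, $\mu'=\frac{n+1}{2}$, $\lambda'=\frac{n-1}{2}$, $t'=\frac{n+1}{2}$ on the same $2n$ vertices, which is exactly the required graph. (Oddness of $n$ is what makes all of these half-integers integral.)

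Next I would verify the two hypotheses of Theorem~\ref{t-1} for this input graph. Writing its parameters as $(N,k,\mu,\lambda,t)=(2n,\,n,\,\frac{n+1}{2},\,\frac{n-1}{2},\,\frac{n+1}{2})$, we have $t=\mu=\frac{n+1}{2}$, and $N+2\lambda+2\mu = 2n+(n-1)+(n+1) = 4n = 4k$, so both conditions $t=\mu$ and $4k=N+2\lambda+2\mu$ of Theorem~\ref{t-1} hold.

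Finally, apply Theorem~\ref{t-1}. It produces a DSRG on $N^2 = 4n^2$ vertices of degree $2k(N-k) = 2n\cdot n = 2n^2$, with $\mu_1 = t_1 = 2(k^2-2\mu\lambda) = 2n^2-(n+1)(n-1) = n^2+1$ and $\lambda_1 = 2(k^2-\lambda^2-\mu^2) = 2n^2-\frac{(n-1)^2+(n+1)^2}{2} = 2n^2-(n^2+1) = n^2-1$. These are precisely the parameters $(4n^2,2n^2,n^2+1,n^2-1,n^2+1)$ asserted in the statement, which finishes the proof.

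Because every step is either an appeal to an already-established result or a one-line substitution, I do not anticipate a genuine obstacle. The only points that need care are (i) confirming via Proposition~\ref{p-3} that the complement of the \cite{K1} graph has exactly the tuple $(2n,n,\frac{n+1}{2},\frac{n-1}{2},\frac{n+1}{2})$, so that it legitimately serves as the input to Theorem~\ref{t-1}; and (ii) checking that the equality $4k=N+2\lambda+2\mu$ actually holds (it does, by the computation above) rather than being merely numerically close, since that identity is exactly what makes the $J\otimes J$ coefficient in Theorem~\ref{t-1} collapse to $2(k^2-2\mu\lambda)$.
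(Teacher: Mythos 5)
Your proposal is correct and matches the paper's own route: the paper likewise obtains the $(2n,n,\frac{n+1}{2},\frac{n-1}{2},\frac{n+1}{2})$-DSRG as the complement (via Proposition~\ref{p-3}) of the $(2n,n-1,\frac{n-1}{2},\frac{n-3}{2},\frac{n-1}{2})$ graph from \cite{K1} and then feeds it into Theorem~\ref{t-1}. Your explicit checks of $t=\mu$, $4k=N+2\lambda+2\mu$, and the output parameters are all accurate, so this is just a more detailed writing of the same argument.
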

\begin{proof}
A DSRG with parameters$(2n,n,\frac{n+1}{2},\frac{n-1}{2},\frac{n+1}{2})$ was constructed, then Theorem \ref{t-5} follows from Theorem \ref{t-1}.
\end{proof}

\begin{thm}\label{t-6}
For integer $4|n$, there exist directed strongly regular graph with the parameters $(4n^2,2n^2,n^2+4,n^2-4,n^2+4)$.
\end{thm}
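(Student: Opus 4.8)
The plan is to reuse the template behind the proofs of Theorems~\ref{t-4} and~\ref{t-5}: produce a ``seed'' DSRG that meets the hypotheses of Theorem~\ref{t-1} and then simply read off the parameters of the derived graph $D(B)$. The seed here is the DSRG with parameters $(2n,n,\frac{n}{2}+1,\frac{n}{2}-1,\frac{n}{2}+1)$, whose existence for $4\mid n$ was recorded in the paragraph preceding Theorem~\ref{t-4}: one starts from the $(2n,n-1,\frac{n}{2}-1,\frac{n}{2}-1,\frac{n}{2})$-DSRG of \cite{HO} (valid for even $n$) and passes to its complement via Proposition~\ref{p-3}. The restriction $4\mid n$, rather than merely $2\mid n$, is exactly what places these complementary parameters in class $(R1)$ of Remark~\ref{r-1}.

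Next I would check that this seed satisfies the two hypotheses of Theorem~\ref{t-1}. Writing $(n',k,\mu,\lambda,t)=(2n,n,\frac{n}{2}+1,\frac{n}{2}-1,\frac{n}{2}+1)$, we have $t=\mu$, and
$$4k=4n=2n+2\left(\tfrac{n}{2}-1\right)+2\left(\tfrac{n}{2}+1\right)=n'+2\lambda+2\mu,$$
so both requirements of Theorem~\ref{t-1} hold, and $D(B)$ is again a DSRG with parameters $\big((n')^2,\,2k(n'-k),\,2(k^2-2\mu\lambda),\,2(k^2-\lambda^2-\mu^2),\,2(k^2-2\mu\lambda)\big)$.

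It then remains to simplify. Using $\mu\lambda=\frac{n^2-4}{4}$ and $\mu^2+\lambda^2=\frac{n^2}{2}+2$, one gets $(n')^2=4n^2$, $2k(n'-k)=2n^2$, $2(k^2-2\mu\lambda)=n^2+4$, and $2(k^2-\lambda^2-\mu^2)=n^2-4$, which is precisely the claimed parameter tuple $(4n^2,2n^2,n^2+4,n^2-4,n^2+4)$. I do not expect any genuine obstacle: the substantive content is already carried by Theorem~\ref{t-1} and the construction of \cite{HO}. The only points needing a little care are confirming that the seed is a bona fide DSRG with $0<t<k$ (so that Theorem~\ref{t-1} is applicable) and performing the four parameter simplifications correctly; both are routine arithmetic.
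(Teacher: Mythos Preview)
Your proposal is correct and follows exactly the paper's own approach: feed the seed DSRG with parameters $(2n,n,\tfrac{n}{2}+1,\tfrac{n}{2}-1,\tfrac{n}{2}+1)$ (obtained as the complement of the Hobart--Shaw construction, as noted before Theorem~\ref{t-4}) into Theorem~\ref{t-1} and read off the resulting parameters. If anything, your write-up is more thorough than the paper's one-line proof, since you explicitly verify the hypotheses $t=\mu$ and $4k=n'+2\lambda+2\mu$ and carry out the arithmetic simplifications.
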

\begin{proof}
A DSRG with parameters$(2n,n,\frac{n}{2}+1,\frac{n}{2}-1, \frac{n}{2}+1)$ was constructed, then Theorem \ref{t-6} follows from Theorem \ref{t-1}.
\end{proof}
\section{Constructions of DSRG(Cayley graphs) by using Semidirect product}
In \cite{D} Duval, Art M., and Dmitri Iourinski constructed a new infinite family of directed strongly regular
graphs, as Cayley graphs of certain semidirect product groups, these results generalizes
an earlier construction of Klin,  Munemasa, Muzychuk, and Zieschang on some
dihedral groups.

In this section, using classical number theory, we also construct some new families directed strongly regular Cayley graphs $\mathcal{C}(G, S)$, where groups $G$ is semidirect product of two groups.

At first, we give some basic definitions of \emph{Cayley graph}, \emph{gruop ring}, Semidirect product and \emph{primitive root}. Let $e_G$ or simply $e$ to be the identity element of $G$. The following definitions are the same with Duval \cite{D}.
\begin{dfn}\label{d-2}
 Let $G$ be a finite group and $S\subset G\backslash e$(Remove identity element $e$ from $G$). The Cayley graph of $G$ generated
by $S$, denoted by $\mathcal{C}(G, S)$, is the digraph $H$ such that $V(H)=G$ and $x\rightarrow y$ if and only if
$x^{-1}y\in S$, for $\forall x, y\in G$.
\end{dfn}
\begin{dfn}\label{d-3}
For any finite group $G$, the group ring $\mathbb{Z}[G]$ is defined as the set of all formal
sums of elements of $G$, with coefficients from $\mathbb{Z}$. i.e.
\[\mathbb{Z}[G]=\left\{\sum_{g\in G}r_gg|r_g\in\mathbb{Z}, r_g\neq 0\text{ for finite g}\right\}.\]
The operations $+$ and $\cdot$ on $\mathbb{Z}[G]$ are given
by
\[\sum_{g\in G}r_gg+\sum_{g\in G}s_gg=\sum_{g\in G}(r_g+s_g)g,\]
\[\left(\sum_{g\in G}r_gg\right)\cdot\left(\sum_{g\in G}r_gg\right)=\left(\sum_{g\in G}t_gg\right),t_g=\sum_{g'g''=g}r_{g'}s_{g''}\]
\end{dfn}
 For any subset $X$ of $G$, Let $\overline{X}$ denote the
the element of the group ring $\mathbb{Z}[G]$ that is the sum of all elements of $X$. i.e. \[\overline{X}=\sum_{x\in X}x\]

The Lemma below allows us to express a sufficient and necessary condition for a Cayley graph to be directed strongly regular graph in terms of the group ring.

\begin{lem}\label{l-7}
The Cayley graph $\mathcal{C}(G, S)$ of $G$ with respect to $S$ is a directed strongly regular graph with parameters $(n, k, \mu,  \lambda, t)$ if and only if $|G| = n$, $|S|= k$, and
\[\overline{S}^2=te+\lambda\overline{S}+\mu(\overline{G}-e-\overline{S}).\tag{7}\]
\end{lem}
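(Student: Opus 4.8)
The plan is to realize the adjacency matrix of $\mathcal{C}(G,S)$ as the image of the group-ring element $\overline{S}$ under a faithful matrix representation of $\mathbb{Z}[G]$, and then to transport the adjacency-matrix characterization of a DSRG (conditions (i) and (ii) from the introduction) term by term between $M_n(\mathbb{Z})$ and $\mathbb{Z}[G]$.

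First I would fix an enumeration $G=\{g_1,\dots,g_n\}$ and, for $\overline{X}=\sum_{g\in G}r_g g\in\mathbb{Z}[G]$, define the $n\times n$ integer matrix $M(\overline{X})$ by $M(\overline{X})_{x,y}=r_{x^{-1}y}$, the coefficient of $x^{-1}y$ in $\overline{X}$. By Definition \ref{d-2} the adjacency matrix $A=A(\mathcal{C}(G,S))$ is exactly $M(\overline{S})$, while $M(e)=I$ and $M(\overline{G})=J$. The key structural fact is that $M$ is an injective ring homomorphism: additivity is immediate; injectivity follows because the row indexed by $e$ satisfies $M(\overline{X})_{e,y}=r_y$, so $M(\overline{X})=0$ forces every $r_g=0$; and multiplicativity is the short computation
\[
\bigl(M(\overline{X})M(\overline{Y})\bigr)_{x,z}=\sum_{y\in G}r_{x^{-1}y}\,s_{y^{-1}z}=\sum_{g'g''=x^{-1}z}r_{g'}s_{g''}=M(\overline{X}\cdot\overline{Y})_{x,z},
\]
where the middle equality uses the substitution $g'=x^{-1}y$, $g''=y^{-1}z$. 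Consequently an identity in $\mathbb{Z}[G]$ holds if and only if its image under $M$ holds in $M_n(\mathbb{Z})$.

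Next I would translate the two defining conditions of a DSRG. Since $g\,\overline{G}=\overline{G}\,g=\overline{G}$ for every $g\in G$, we get $\overline{S}\cdot\overline{G}=\overline{G}\cdot\overline{S}=|S|\,\overline{G}$, hence $AJ=JA=|S|J$ automatically; so condition (i), $JA=AJ=kJ$, holds precisely when $k=|S|$. Applying $M$ to \eqref{7}, using that $M$ is a homomorphism with $M(e)=I$, $M(\overline{G})=J$, $M(\overline{S})=A$, turns its right-hand side into $tI+\lambda A+\mu(J-I-A)$ and its left-hand side into $A^2$; by injectivity of $M$ this passage is an equivalence, so \eqref{7} is equivalent to condition (ii). Combining this with the trivial identification $|V(\mathcal{C}(G,S))|=|G|$, the adjacency-matrix characterization of a DSRG recalled in the introduction becomes: $|G|=n$, $|S|=k$, and \eqref{7}, which is the assertion.

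The only genuine computation is the multiplicativity displayed above, i.e. checking that matrix multiplication corresponds to the group-ring product under the indexing convention $M(\overline{X})_{x,y}=r_{x^{-1}y}$; this is where one must be careful about the direction of arcs and the left/right convention, but it is routine. Everything else is bookkeeping, and I would close by noting that $M$ is nothing but the standard isomorphism of $\mathbb{Z}[G]$ onto the algebra of $G$-circulant matrices, here specialized to the $0$–$1$ element $\overline{S}$.
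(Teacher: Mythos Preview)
Your argument is correct. The paper, however, does not prove Lemma~\ref{l-7} at the point where it is stated; it is presented as a known fact. The only justification the paper offers comes later: Theorem~\ref{t-16} gives the analogous criterion for Cayley coset graphs, and the remark following it observes that the case $H=\{e\}$ recovers Lemma~\ref{l-7}. The proof there is a direct combinatorial count (Lemma~\ref{l-15}): one shows that the coefficient of $x^{-1}y$ in $\overline{S}^{\,2}$ equals the number of length-$2$ paths from $x$ to $y$, and then reads off the DSRG conditions coefficient by coefficient.

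Your route is slightly different in packaging: instead of counting paths directly, you build the faithful ring embedding $M:\mathbb{Z}[G]\hookrightarrow M_n(\mathbb{Z})$ given by the left regular representation and transport the matrix identities (i) and (ii) through it. This is of course the same content---your multiplicativity computation \emph{is} the path-counting identity---but it has the minor advantage of handling the regularity condition $AJ=JA=kJ$ and the quadratic relation in one stroke, and it makes the ``if and only if'' direction transparent via injectivity of $M$ rather than via a separate converse argument. Either approach is standard; yours is complete as written.
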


\begin{dfn}\label{d-4}
Let $\theta:B\rightarrow Aut\;A$ be an action of a group $B$ on another group A: Let
$A\rtimes_\theta B$ be the direct product set of $A$ and $B$; i.e., set of pairs $(a, b)$ operation for the product of two elements
\[(a,b)(a',b')=(a[\theta(b)(a')], bb')\]
Then $A\rtimes_\theta B$ is a group of order $|A||B|$, This group is called the semidirect product of $A$ and $B$
with respect to the action $\theta$.
\end{dfn}
From the definition of semidirect product of $A$ and $B$, we can obtain that $A$ is isomorphic to $A\times e_{B}$,\;i.e.,\;$A\cong A\times e_{B}$, and $B$ is isomorphic to $e_{A}\times B$,\;i.e.,\;$B\cong e_{A}\times B$.
 Then we can equate $a$ with $(a, e_B)$ for $a\in A$, and $b$ with $(e_A, b)$ for $b\in B$, and it is easy to verify that $ab=(a, e_B)(e_A, b)=(a, b)$, $ba=(e_A, b)(a, e_B)=(a, b)(a', b')=([\theta(b)(a)], b)=[\theta(b)(a)]b$, $e_A=(e_A, e_B)=e_B=e$($e$ is the identity element of $A\rtimes_\theta B$).
\begin{dfn}\label{d-5}
For positive integer $s$, which coprime with $n$, we denote $\delta_n(s)$ with the the least positive integer $k$ such that $s^k\equiv 1(mod\hspace{3pt}n)$. If $\delta_n(s)$ is equal to the totient of $n$
, the number of positive integers that are both less $n$ and coprime to $n$, i.e. $\delta_n(s)=\varphi(n)$, then we say that $s$ is a primitive root of modulo $n$.
\end{dfn}
From the classical number theory, the following result characterizes the existence of primitive root of modulo $n$.

\begin{lem}\label{l-8}
There is a primitive root of modulo $n$ if and only if $n=2$ or $4$, or $n=p^k$ or $2p^k$ for  odd prime an $k\in\mathbb{N}$.
\end{lem}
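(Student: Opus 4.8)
The plan is to translate the statement into the language of group theory. By Definition \ref{d-5}, an integer $s$ coprime to $n$ is a primitive root modulo $n$ exactly when its residue generates the multiplicative group $(\mathbb{Z}/n\mathbb{Z})^\times$, whose order is $\varphi(n)$; hence ``there is a primitive root modulo $n$'' is equivalent to ``$(\mathbb{Z}/n\mathbb{Z})^\times$ is cyclic''. So the lemma amounts to: $(\mathbb{Z}/n\mathbb{Z})^\times$ is cyclic if and only if $n\in\{2,4,p^k,2p^k\}$ for an odd prime $p$ and $k\in\mathbb{N}$. Throughout I would use the Chinese Remainder Theorem in the form $(\mathbb{Z}/n\mathbb{Z})^\times\cong\prod_i(\mathbb{Z}/p_i^{a_i}\mathbb{Z})^\times$ when $n=\prod_i p_i^{a_i}$, and the elementary fact that a finite cyclic group of even order has exactly one element of order $2$, while a quotient of a cyclic group is again cyclic.

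For necessity, suppose $n$ is not of the listed shape. Inspecting the $2$-part of $n$, one of three things happens: either $8\mid n$, or $n$ is divisible by two distinct odd primes, or $n$ is divisible by $4$ and by an odd prime. In the first case the reduction map $(\mathbb{Z}/n\mathbb{Z})^\times\to(\mathbb{Z}/8\mathbb{Z})^\times$ is surjective, and $(\mathbb{Z}/8\mathbb{Z})^\times$ already contains the three distinct square roots of unity $\overline{3},\overline{5},\overline{7}$, so it is not cyclic, hence neither is $(\mathbb{Z}/n\mathbb{Z})^\times$. In the remaining two cases $n$ factors as $n=n_1n_2$ with $\gcd(n_1,n_2)=1$ and $\varphi(n_1),\varphi(n_2)$ both even (take $n_1$ to be one odd prime power factor and $n_2$ the rest; or $n_1=4$ and $n_2$ the odd part); then $(\mathbb{Z}/n\mathbb{Z})^\times\cong(\mathbb{Z}/n_1\mathbb{Z})^\times\times(\mathbb{Z}/n_2\mathbb{Z})^\times$ has an element of order $2$ in each factor, so it is not cyclic.

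For sufficiency, the cases $n=2,4$ are immediate ($\overline{1}$, resp.\ $\overline{3}$, generates). For $n=p$ an odd prime I would prove $(\mathbb{Z}/p\mathbb{Z})^\times$ is cyclic by the classical counting argument: in the field $\mathbb{Z}/p\mathbb{Z}$ the polynomial $x^d-1$ has at most $d$ roots, so at most $d$ elements have order dividing $d$ for each $d\mid p-1$; comparing with $\sum_{d\mid p-1}\varphi(d)=p-1$ forces exactly $\varphi(d)$ elements of each order $d$, in particular $\varphi(p-1)\ge 1$ generators. For $n=p^k$ with $k\ge 2$ I would lift a primitive root $g$ modulo $p$: after replacing $g$ by $g+p$ if necessary one may assume $g^{p-1}=1+c_1p$ with $p\nmid c_1$, and then an induction on $j$ shows $g^{p^{j-1}(p-1)}=1+c_jp^{j}$ with $p\nmid c_j$, the inductive step coming from the binomial expansion of $(1+c_jp^{j})^p$, where the term $\binom{p}{2}c_j^2p^{2j}$ and all later terms are divisible by $p^{j+2}$ precisely because $p$ is odd. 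Taking $j=k-1$ gives $g^{p^{k-2}(p-1)}\not\equiv 1\pmod{p^k}$, so the order of $g$ modulo $p^k$, which is a multiple of $p-1$ dividing $\varphi(p^k)=p^{k-1}(p-1)$, must equal $\varphi(p^k)$. Finally $n=2p^k$ follows from $(\mathbb{Z}/2p^k\mathbb{Z})^\times\cong(\mathbb{Z}/2\mathbb{Z})^\times\times(\mathbb{Z}/p^k\mathbb{Z})^\times\cong(\mathbb{Z}/p^k\mathbb{Z})^\times$, which is cyclic by the previous case.

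I expect the delicate point to be the prime-power lifting step: one must track the exact power of $p$ dividing $g^{p^{j-1}(p-1)}-1$ through the induction and see clearly where the hypothesis $p\neq 2$ is used (it is exactly the divisibility of $\binom{p}{2}$ by $p$, equivalently the failure at $n=2^k$). On the necessity side, the small bookkeeping is to be sure every excluded $n$ indeed falls into one of the three cases above so that one always produces either the non-cyclic quotient $(\mathbb{Z}/8\mathbb{Z})^\times$ or a coprime factorization with both totients even. The field-theoretic count for $n=p$ and the Chinese Remainder Theorem manipulations are routine by comparison. Since this is the standard structure theorem for $(\mathbb{Z}/n\mathbb{Z})^\times$, one could also simply cite it and omit the proof.
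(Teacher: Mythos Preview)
Your proof is correct and is the standard textbook argument for the primitive root theorem. The paper, however, does not prove this lemma at all: it introduces it with the sentence ``From the classical number theory, the following result characterizes the existence of primitive root of modulo $n$'' and simply states it without proof. So your final remark that ``one could also simply cite it and omit the proof'' is exactly what the paper does. Your written-out argument is thus more detailed than the paper's treatment, not different in approach.
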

Let $C_{n}=\langle{x}\rangle$ be multiplicative cyclic groups of orders $n$ with generator $x$.

\subsection{Cayley graph $\mathcal{C}{(C_{p}\rtimes_{\theta(m)} C_{n}, S)}$ to be DSRG}
Let $A=C_{p}=\langle{a}\rangle$ be multiplicative cyclic groups of odd prime orders $p$, $B=C_{n}=\langle{x}\rangle$ be another multiplicative cyclic groups of  orders $n$.

Let $m$ be an integer such that $(m, p)=1$, $m\not\equiv1(mod \hspace*{3pt}p)$, $m^{n}\equiv1(mod \hspace*{3pt}p)$. The map $\beta_m\in{Aut\hspace*{3pt}C_{p}}$ given by $\beta_m$ :\;$a^i\rightarrow a^{mi}$\;is an automorphism, and the map $\theta(m)$: $B\rightarrow Aut\hspace*{3pt}C_{p}$ given by $\theta(m)(x^u)=\beta_m^{u}$\hspace*{3pt}is a homomorphism. Then $A\rtimes_{\theta(m)} B=\langle{a, x|a^{p}=x^n=e, xa=a^mx}\rangle$ \hspace*{3pt}is a group of order $pn$.

Let $H$ be a proper subset of $\{1, 2, \cdots, p-1\}$. We denote $v$ with the cardinalty of $H$, i.e.$|H|=v$.
\subsubsection{\texorpdfstring{$(pn, vn, \frac{n}{p-1}v^2, \frac{n}{p-1}v(v-1), \frac{n}{p-1}v^2)-$}{Lg}DSRG for \texorpdfstring{$1\leq v\leq p-2$}{Lg}}
\begin{thm}\label{t-9}
Let $A=C_{p}$, $p$ be an odd prime, $m$ be a primitive root of module $p$, $n$ be an integer such that $p-1\mid n$. Let $B=\langle x|x^{n}=e_{B}\rangle$ and $A'=\{a^l|l\in H\}$.\;Then the Cayley graph $\mathcal{C}{(A\rtimes_{\theta(m)} B, A'\times B)}$ is a directed strongly regular graph with the parameters
\[(pn, vn, \frac{n}{p-1}v^2, \frac{n}{p-1}v(v-1), \frac{n}{p-1}v^2).\]
\end{thm}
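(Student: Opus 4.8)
The plan is to reduce everything to Lemma \ref{l-7}. The cardinality conditions are immediate: under the identification $(a^l,x^j)\mapsto a^lx^j$ the set $A'\times B$ consists of the $vn$ distinct elements $a^lx^j$ with $l\in H$, $0\le j\le n-1$ (here $l\mapsto a^l$ is injective because $H\subseteq\{1,\dots,p-1\}$ and $\operatorname{ord}(a)=p$), so $|A\rtimes_{\theta(m)}B|=pn$ and $|A'\times B|=vn$; also $e\notin A'\times B$ since $0\notin H$. Thus it remains to verify the group‑ring identity (7) for $S=A'\times B$ with $t=\mu=\frac{n}{p-1}v^2$ and $\lambda=\frac{n}{p-1}v(v-1)$. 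I would introduce $\overline{A'}=\sum_{l\in H}a^l$, $\overline{A}=\sum_{i=0}^{p-1}a^i$ and $\overline{B}=\sum_{j=0}^{n-1}x^j$, record the factorization $\overline{S}=\overline{A'}\,\overline{B}$ (which holds because the products $a^lx^j$ are pairwise distinct), and observe that then $\overline{S}^2=\overline{A'}\,(\overline{B}\,\overline{A'}\,\overline{B})$, so the whole task is the evaluation of the middle block $\overline{B}\,\overline{A'}\,\overline{B}$ in $\mathbb{Z}[G]$.

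The core of the argument is the commutation rule $x^j a^l=a^{lm^j}x^j$, which follows from $\theta(m)(x^j)=\beta_m^{\,j}$ and $\beta_m(a^i)=a^{mi}$. It gives $\overline{B}\,\overline{A'}=\sum_{j=0}^{n-1}\bigl(\sum_{l\in H}a^{lm^j}\bigr)x^j$, and multiplying on the right by $\overline{B}$ and reindexing the $x$‑powers ($\{x^{j+i}:0\le i\le n-1\}=B$ for every fixed $j$) yields $\overline{B}\,\overline{A'}\,\overline{B}=\bigl(\sum_{j=0}^{n-1}\sum_{l\in H}a^{lm^j}\bigr)\overline{B}$. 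Now the number‑theoretic hypotheses enter: writing $n=(p-1)q$ (legitimate since $p-1\mid n$) and using that $m$ is a primitive root mod $p$, the list $m^0,m^1,\dots,m^{n-1}$ runs through each nonzero residue mod $p$ exactly $q=n/(p-1)$ times; for each fixed $l\in H$ multiplication by the unit $l$ permutes $(\mathbb{Z}/p)^{\times}$, so $\sum_{j=0}^{n-1}a^{lm^j}=q(\overline{A}-e)$. Summing over the $v$ elements of $H$ gives $\sum_{j,l}a^{lm^j}=\frac{nv}{p-1}(\overline{A}-e)$, hence $\overline{B}\,\overline{A'}\,\overline{B}=\frac{nv}{p-1}(\overline{A}-e)\overline{B}=\frac{nv}{p-1}(\overline{G}-\overline{B})$, where I use $\overline{A}\,\overline{B}=\overline{G}$.

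Finally I would multiply back by $\overline{A'}$ on the left, using $\overline{A'}\,\overline{G}=v\,\overline{G}$ and $\overline{A'}\,\overline{B}=\overline{S}$, to get $\overline{S}^2=\frac{nv}{p-1}\bigl(v\,\overline{G}-\overline{S}\bigr)=\frac{nv^2}{p-1}\overline{G}-\frac{nv}{p-1}\overline{S}$. Comparing with the target $te+\lambda\overline{S}+\mu(\overline{G}-e-\overline{S})=\mu\overline{G}+(t-\mu)e+(\lambda-\mu)\overline{S}$ and substituting $t=\mu=\frac{nv^2}{p-1}$, $\lambda=\frac{nv(v-1)}{p-1}$, one has $t-\mu=0$ and $\lambda-\mu=-\frac{nv}{p-1}$, so the two expressions coincide; the constraint $1\le v\le p-2$ is exactly what guarantees $0<t<k$. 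I do not expect a real obstacle: the only delicate points are the bookkeeping in the commutation/reindexing step and making sure the primitive‑root hypothesis and the divisibility $p-1\mid n$ are invoked precisely where the exact count ``each nonzero residue appears $q$ times'' is needed.
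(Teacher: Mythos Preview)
Your proposal is correct and follows essentially the same route as the paper: compute the middle block $\overline{B}\,\overline{A'}\,\overline{B}$ via the commutation rule and the primitive-root/divisibility count, then left-multiply by $\overline{A'}$ to obtain $\overline{S}^2=\frac{nv^2}{p-1}\overline{G}-\frac{nv}{p-1}\overline{S}$ and invoke Lemma~\ref{l-7}. The only cosmetic difference is that you shortcut the final step using $\overline{A'}\,\overline{G}=v\,\overline{G}$, whereas the paper expands the double sum $\sum_{l'\in H}\sum_{l=1}^{p-1}a^{l'+l}$ explicitly before simplifying.
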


\begin{proof}
Let $S=A'\times B$, first we compute $\overline{B}\hspace*{3pt}\overline{A'}\hspace*{3pt}\overline{B}$\hspace*{3pt} in the group ring $\mathbb{Z}[A\rtimes_{\theta(m)} B]$:
\[\overline{B}\hspace*{3pt}\overline{A'}\hspace*{3pt}\overline{B}=\left(\sum_{u=0}^{n-1}x^{u} \right)\left(\sum_{l\in H}a^k\right)\overline{B}\hspace{40pt}\]
\[=\sum_{l\in H}\sum_{u=0}^{n-1}[\theta(x^{u})(a^l)]x^u\overline{B}\hspace*{7pt}\]
\[=\sum_{l\in H}\sum_{u=0}^{n-1}\beta_m^{u}(a^l)\overline{B}\hspace*{32pt}\]
\[=\sum_{l\in H}\sum_{u=0}^{n-1}a^{lm^{u}}\overline{B}\hspace{39pt}\]
\[=\frac{n|H|}{p-1}\sum_{l=1}^{p-1}a^{l}\overline{B}\hspace{39pt}.\tag{8}\]
The last equation follows from $gcd(l, p)=1$ for each $l\in H$.\;Then
\[\overline{S}^2=(\overline{A'\times B})^2=\overline{A'}\hspace{2pt}\overline{B}\hspace{2pt}\overline{A'}\hspace{2pt}\overline{B}\hspace{65pt}\]
\[=\frac{n|H|}{p-1}\sum_{l'\in H}\sum_{l=1}^{p-1}a^{l'+l}\overline{B}\hspace{59pt}\]
\[=\frac{n|H|}{p-1}\sum_{l'\in H}\sum_{\substack{l=0\\l\neq l'}}^{p-1}a^{l}\overline{B}\hspace{70pt}\]
\[=\frac{n|H|^2}{p-1}\sum_{l=0}^{p-1}a^{l}\overline{B}-\frac{n|H|}{p-1}\sum_{l'\in H}a^{l'}\overline{B}\hspace{9pt}\]
\[=\frac{nv^2}{p-1}\overline{A\rtimes_\theta B}-\frac{nv}{p-1}\overline{S}.\hspace{45pt}\tag{9}\]
Thus
\[\overline{S}^2=\frac{nv^2}{p-1}e+(\frac{nv^2}{p-1}-\frac{nv}{p-1})\overline{S}+\frac{nv^2}{p-1}(\overline{A\rtimes_\theta B}-e-\overline{S}).\]
Hence from Lemma \ref{l-7}, Cayley graph $\mathcal{C}{(A\rtimes_{\theta(m)} B, A'\times B)}$ is a directed strongly regular graph with the parameters
\[(pn, vn, \frac{n}{p-1}v^2, \frac{n}{p-1}v(v-1), \frac{n}{p-1}v^2).\]
\end{proof}
\begin{rmk}\label{r-3}
Indeed, if $v=p-1$, then we can also get $(\alpha p(p-1), \alpha (p-1)^2, \alpha (p-1)^2, \alpha (p-2)(p-1), \alpha (p-1)^2)-$DSRG, but it is a SRG in this case.
\end{rmk}

\subsubsection{\texorpdfstring{$(pn, n(v+1)-1, \frac{nv(v+1)}{p-1}, n-2+\frac{nv^2}{p-1}, n-1+\frac{nv^2}{p-1})-$}{Lg}DSRG for \texorpdfstring{$1\leq v\leq p-2$}{Lg}}

The subset $(A'+e_A)\times B\backslash e_A$ of $A\rtimes_{\theta(m)} B$ means that remove element $e_A$ from $(A'+e_A)\times B$, so we can get $(A'+e_A)\times B\backslash e_A=\{a^lx^j, a^c|1\leq j\leq n-1, l\in H\cup\{0\}, c\in H\}$,
\begin{thm}\label{t-10}
Let $A=C_{p}=\langle a\rangle$, $p$ be an odd prime, $m$ be a primitive root of module $p$, $n$ be an integer such that $p-1\mid n$. Let $B=C_n=\langle x|x^{n}=e_{B}\rangle$, and $A'=\{a^l|l\in H\}$.\;Then the Cayley graph $\mathcal{C}{(A\rtimes_{\theta(m)} B, (A'+e_A)\times B\backslash e_A)}$ is a directed strongly regular graph with the parameters
\[(pn, n(v+1)-1, \frac{nv(v+1)}{p-1}, n-2+\frac{nv^2}{p-1}, n-1+\frac{nv^2}{p-1}).\]
\end{thm}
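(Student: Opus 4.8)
The plan is to apply Lemma~\ref{l-7}. Writing $S=(A'+e_A)\times B\backslash e_A$, it suffices to check that $|A\rtimes_{\theta(m)}B|=pn$, that $|S|=n(v+1)-1$, and that $\overline{S}$ satisfies the group-ring identity~(7) with the asserted parameters. The order of the group is clear, and since $0\notin H$ we have $e_A\notin A'$, so $(A'\cup\{e_A\})\times B$ has $(v+1)n$ elements and, deleting the identity $e=(e_A,e_B)$, we get $|S|=n(v+1)-1=k$. In $\mathbb{Z}[A\rtimes_{\theta(m)}B]$ the connection element is $\overline{S}=\overline{A'}\,\overline{B}+\overline{B}-e$. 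Abbreviate $R=\overline{A'}\,\overline{B}$ (this is the element written $\overline{S}$ in Theorem~\ref{t-9}) and $\overline{G}=\overline{A\rtimes_\theta B}$; then $\overline{S}=R+\overline{B}-e$ and, usefully, $R+\overline{B}=\overline{S}+e$.

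The main step is to expand, using that $e$ is central,
\[\overline{S}^2=(R+\overline{B}-e)^2=R^2+R\,\overline{B}+\overline{B}\,R+\overline{B}^2-2R-2\overline{B}+e,\]
and to substitute four identities: (i) $R^2=\frac{nv^2}{p-1}\overline{G}-\frac{nv}{p-1}R$, which is precisely equation~(9); (ii) $\overline{B}^2=n\overline{B}$, since $B$ is a subgroup of order $n$; (iii) $R\,\overline{B}=\overline{A'}\,\overline{B}^2=nR$; and (iv) the ``twisted'' product $\overline{B}\,R=\overline{B}\,\overline{A'}\,\overline{B}=\frac{nv}{p-1}(\overline{G}-\overline{B})$, which is equation~(8) combined with $\overline{A}\,\overline{B}=\overline{G}$ (where $\overline{A}=\sum_{l=0}^{p-1}a^l$, and $\overline{A}\,\overline{B}=\overline{G}$ because $(a,b)\mapsto ab$ is a bijection from $A\times B$ onto $A\rtimes_{\theta(m)}B$). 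Collecting terms and using $R+\overline{B}=\overline{S}+e$ gives
\[\overline{S}^2=\frac{nv(v+1)}{p-1}\,\overline{G}+\Bigl(n-2-\tfrac{nv}{p-1}\Bigr)\overline{S}+\Bigl(n-1-\tfrac{nv}{p-1}\Bigr)e.\]
Matching the right-hand side against $t\,e+\lambda\overline{S}+\mu(\overline{G}-e-\overline{S})=\mu\,\overline{G}+(\lambda-\mu)\overline{S}+(t-\mu)e$, the coefficient of $\overline{G}$ forces $\mu=\frac{nv(v+1)}{p-1}$, the coefficient of $\overline{S}$ then gives $\lambda=\mu+n-2-\frac{nv}{p-1}=n-2+\frac{nv^2}{p-1}$, and the coefficient of $e$ gives $t=\mu+n-1-\frac{nv}{p-1}=n-1+\frac{nv^2}{p-1}$; these are the claimed parameters, so Lemma~\ref{l-7} finishes the proof.

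I do not expect a genuine obstacle here: with equations~(8) and~(9) already established, the argument is a short computation in the group ring. The two points that need care are the noncommutativity — note $R\,\overline{B}=nR$ but $\overline{B}\,R=\frac{nv}{p-1}(\overline{G}-\overline{B})$, so it is genuinely identity~(8) that controls the left multiplication by $\overline{B}$ — and keeping track of which of the three collected coefficients is $\mu$, which is $\lambda-\mu$, and which is $t-\mu$, since a slip there would propagate into wrong values of $\lambda$ and $t$.
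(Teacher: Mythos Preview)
Your proof is correct and follows essentially the same approach as the paper: both compute $\overline{S}^2$ for $\overline{S}=\overline{A'}\,\overline{B}+\overline{B}-e$ by invoking equations~(8) and~(9) from Theorem~\ref{t-9} together with $\overline{B}^2=n\overline{B}$ and $\overline{A'}\,\overline{B}^2=n\,\overline{A'}\,\overline{B}$, then collect terms via $R+\overline{B}=\overline{S}+e$. The only cosmetic difference is that the paper groups the expansion as $(\overline{A'}\,\overline{B})^2+(\overline{B}-e)^2+(\overline{B}-e)\overline{A'}\,\overline{B}+\overline{A'}\,\overline{B}(\overline{B}-e)$ rather than writing out all six cross terms directly.
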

\begin{proof}
Let $S=(A'+e_A)\times B\backslash e_A)$, then $\overline{S}=\sum\limits_{k\in H}a^k\overline{B}+\overline{B}- e_A=\overline{A'}\;\overline{B}+\overline{B}-e_A$. Thus
\[\overline{S}^2=\left(\overline{A'}\hspace*{3pt}\overline{B}+\overline{B}-e_A\right)^2\hspace{163pt}\]
\[\hspace{7pt}=(\overline{A'}\hspace*{3pt}\overline{B})^2+(\overline{B}-e_A)^2+(\overline{B}-e_A)\overline{A'}\hspace*{3pt}\overline{B}+\overline{A'}\hspace*{3pt}\overline{B}(\overline{B}-e_A)\]
\[\hspace{38pt}=(\overline{A'}\hspace*{3pt}\overline{B})^2+(|B|-2)\overline{B}+e_A+\overline{B}\;\overline{A'}\hspace*{3pt}\overline{B}-\overline{A'}\hspace*{3pt}\overline{B}+(|B|-1)\overline{A'}\hspace*{3pt}\overline{B}\]
\[=(\overline{A'}\hspace*{3pt}\overline{B})^2+e_A+\overline{B}\;\overline{A'}\hspace*{3pt}\overline{B}+(|B|-2)(\overline{A'}\hspace*{3pt}\overline{B}+\overline{B}).\hspace{26pt}\]
It follows from equations (8) and (9) in Theorem \ref{t-9} that
\[\overline{B}\;\overline{A'}\;\overline{B}=\frac{nv}{p-1}\sum_{k=1}^{p-1}a^{k}\overline{B}=\frac{nv}{p-1}(\overline{A}-e_A)\overline{B}\]
and \[(\overline{A'}\hspace*{3pt}\overline{B})^2=\frac{nv^2}{p-1}\overline{A\rtimes_{\theta(m)} B}-\frac{nv}{p-1}\overline{A'}\;\overline{B}.\]
Then $\overline{S}^2$ becomes
\[\hspace{8pt}=\frac{nv^2}{p-1}\overline{A\rtimes_{\theta(m)} B}-\frac{nv}{p-1}\overline{A'}\;\overline{B}+\frac{nv}{p-1}(\overline{A}-e_A)\overline{B}+e_A+(|B|-2)(\overline{S}+e_A)\]
\[\hspace{9pt}=\frac{nv^2}{p-1}\overline{A\rtimes_{\theta(m)} B}-\frac{nv}{p-1}(\overline{A'}+e_A)\overline{B}+\frac{nv}{p-1}\overline{A\rtimes_{\theta(m)} B}+(n-2)\overline{S}+(n-1)e_A\]
\[\hspace{2pt}=\frac{nv(v+1)}{p-1}\overline{A\rtimes_{\theta(m)} B}-\frac{nv}{p-1}(\overline{S}+e_A)+(n-2)\overline{S}+(n-1)e_A\hspace{50pt}\]
\[\hspace{2pt}=\frac{nv(v+1)}{p-1}\overline{A\rtimes_{\theta(m)} B}+(n-2-\frac{nv}{p-1})\overline{S}+(n-1-\frac{nv}{p-1})e.\hspace{53pt}\]
Thus
\[\overline{S}^2=(n-1+\frac{nv^2}{p-1})e+(n-2+\frac{nv^2}{p-1})\overline{S}+\frac{nv(v+1)}{p-1}(\overline{A\rtimes_{\theta(m)} B}-e-\overline{S}).\]
Hence from Lemma \ref{l-7}, Cayley graph $\mathcal{C}{(A\rtimes_{\theta(m)} B, S)}$ is a directed strongly regular graph with the parameters
\[(pn, n(v+1)-1, \frac{nv(v+1)}{p-1}, n-2+\frac{nv^2}{p-1}, n-1+\frac{nv^2}{p-1}).\]
\end{proof}
\subsection{\texorpdfstring{$(mq, m+q-2, \frac{m-1}{q}+1, \frac{m-1}{q}+q-2, \frac{m-1}{q}+q-1)-$}{Lg}DSRG}
For $A$ be a finite group of order $m$ and some $\beta \in Aut\;A$, Duval, Art M., and Dmitri Iourinski \cite{D} define a group automorphism $\beta$ which has the $q$-orbit condition if each of its untrivial orbits contains $q$ elements. i.e. If the group $\langle\beta\rangle$ (\;The subgroup of $Aut\;A$ generated by $\beta$) acts on $A$ naturally,\;then each untrivial orbit have $q$ elements. In other words, $\beta$ satisfies $\beta^{q}(a) = a$, for all $a \in A$, and $\beta^{u}(a) = a$ implies $q|u$, for all $a \neq e_{A}$.

 Let $A$ be a finite group of order $m$ and some $\beta \in Aut\;A$ has the $q$-orbit condition, let $B$ be the cyclic group of order $q$   with generator $b$, and define $\theta \colon B \rightarrow Aut\;A$ by $\theta(b^{u})=\beta^{u}$. let $A'$ be a set of representatives of the nontrivial orbits of $\beta$. Duval observed that the Cayley graph $\mathcal{C}(A \rtimes_{\theta} B,  A' \times B)$
is a directed strongly regular graph with the parameters $$(mq, \ m-1, \ (m-1)/q, \ ((m-1)/q)-1, \ (m-1)/q).$$

A similar result will be given in the next Theorem.
\begin{thm}\label{t-11}
Let $A$ be a finite group of order $m$.\;If some $\beta \in Aut\;A$ has the
$q$-orbit condition, let $B$ be the cyclic group of order $q$ with generator $b$, and define $\theta \colon B \rightarrow Aut\;A$ by $\theta(b^{u})=\beta^{u}$ for $1\leq u\leq q$. Let $A'$ be a set of representatives of the nontrivial orbits of $\beta$. Then the Cayley graph $\mathcal{C}(A \rtimes_{\theta} B,(A'+e_A)\times B\backslash e_A)$
    is a directed strongly regular graph with the parameters
    \[(mq, m+q-2, \frac{m-1}{q}+1, \frac{m-1}{q}+q-2, \frac{m-1}{q}+q-1)\]
\end{thm}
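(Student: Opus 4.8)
The plan is to run the same group-ring computation used in the proof of Theorem~\ref{t-10}, but with the general group $A$, the automorphism $\beta$, and the cyclic group $B=C_q$ in place of $C_p$, $\beta_m$, and $C_n$. Write $G=A\rtimes_\theta B$ and $S=(A'+e_A)\times B\setminus e_A$. First I would record the bookkeeping: $|G|=mq$; since $A'$ is a complete set of representatives of the nontrivial $\beta$-orbits and each such orbit has exactly $q$ elements, $|A'|=(m-1)/q$, so $|S|=(|A'|+1)q-1=m+q-2=k$; and in $\mathbb{Z}[G]$ one has $\overline{S}=\overline{A'}\,\overline{B}+\overline{B}-e_A$. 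Expanding the square exactly as in the proof of Theorem~\ref{t-10} (keeping both cross terms and using only $\overline{B}^2=q\overline{B}$ and $e_A\overline{B}=\overline{B}$) reduces $\overline{S}^2$ to
\[\overline{S}^2=(\overline{A'}\,\overline{B})^2+\overline{B}\,\overline{A'}\,\overline{B}+e_A+(q-2)(\overline{A'}\,\overline{B}+\overline{B}).\]

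The main step is to evaluate $\overline{B}\,\overline{A'}\,\overline{B}$. Using the semidirect-product relation $b^u a=[\theta(b^u)(a)]b^u=\beta^u(a)b^u$ and then $b^u\overline{B}=\overline{B}$, one gets
\[\overline{B}\,\overline{A'}\,\overline{B}=\sum_{a\in A'}\sum_{u=0}^{q-1}\beta^u(a)\,b^u\overline{B}=\left(\sum_{a\in A'}\sum_{u=0}^{q-1}\beta^u(a)\right)\overline{B}.\]
Here the $q$-orbit condition enters: for each $a\in A'$ the elements $\beta^0(a),\dots,\beta^{q-1}(a)$ are the $q$ distinct members of the orbit of $a$, and as $a$ ranges over the transversal $A'$ these orbits are disjoint and exhaust $A\setminus\{e_A\}$, so the inner double sum equals $\overline{A}-e_A$. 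Hence $\overline{B}\,\overline{A'}\,\overline{B}=(\overline{A}-e_A)\overline{B}=\overline{G}-\overline{B}$, using the unique factorization of each element of $G$ as $ab$ with $a\in A$, $b\in B$ to get $\overline{A}\,\overline{B}=\overline{G}$. Multiplying on the left by $\overline{A'}$ and using $\overline{A'}\,\overline{A}=|A'|\overline{A}$ then gives
\[(\overline{A'}\,\overline{B})^2=\overline{A'}\,(\overline{B}\,\overline{A'}\,\overline{B})=\overline{A'}\,(\overline{A}-e_A)\overline{B}=\tfrac{m-1}{q}\,\overline{G}-\overline{A'}\,\overline{B}.\]

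Substituting these two identities into the reduced form of $\overline{S}^2$, collecting terms, and using $\overline{A'}\,\overline{B}+\overline{B}=\overline{S}+e_A$, I expect to land on
\[\overline{S}^2=\left(\tfrac{m-1}{q}+1\right)\overline{G}+(q-3)\overline{S}+(q-2)\,e.\]
Comparing with $te+\lambda\overline{S}+\mu(\overline{G}-e-\overline{S})=\mu\overline{G}+(\lambda-\mu)\overline{S}+(t-\mu)e$ forces $\mu=\tfrac{m-1}{q}+1$, $\lambda-\mu=q-3$, and $t-\mu=q-2$, i.e. $\lambda=\tfrac{m-1}{q}+q-2$ and $t=\tfrac{m-1}{q}+q-1$, which are the claimed parameters; together with $|G|=mq$ and $|S|=m+q-2$, Lemma~\ref{l-7} then completes the proof.

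The only genuinely non-routine point is the evaluation $\sum_{a\in A'}\sum_{u=0}^{q-1}\beta^u(a)=\overline{A}-e_A$: this is exactly where the hypotheses that every nontrivial orbit has size precisely $q$ and that $A'$ is a complete transversal are used, and it is the analogue of equation~(8), now without the multiplicity factor $n/(p-1)$ that appeared in Theorem~\ref{t-10}. Everything after that is routine group-ring manipulation whose shape is identical to the proof of Theorem~\ref{t-10}, so the risk there is only arithmetic slips in collecting coefficients.
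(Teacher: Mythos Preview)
Your proposal is correct and follows essentially the same approach as the paper: both expand $\overline{S}^2=(\overline{A'}\,\overline{B}+\overline{B}-e_A)^2$ in $\mathbb{Z}[G]$, use the two key identities $\overline{B}\,\overline{A'}\,\overline{B}=(\overline{A}-e_A)\overline{B}$ and $(\overline{A'}\,\overline{B})^2=|A'|\,\overline{G}-\overline{A'}\,\overline{B}$, and then collect terms to reach $\overline{S}^2=(|A'|+1)\overline{G}+(q-3)\overline{S}+(q-2)e$ before invoking Lemma~\ref{l-7}. The only cosmetic difference is that the paper quotes those two identities from Duval--Iourinski~\cite{D}, whereas you rederive them from the $q$-orbit condition; your derivation of the first identity is exactly where that hypothesis is used, as you note.
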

\begin{proof}
We recall that $\overline{B}\hspace*{3pt}\overline{A'}\hspace*{3pt}\overline{B}=(\overline{A}-e_A)\overline{B}=\overline{A}\hspace*{3pt}\overline{B}-\overline{B}$,
$\overline{A'}\hspace*{3pt}\overline{B}\hspace*{3pt}\overline{A'}\hspace*{3pt}\overline{B}=|A'|\overline{A\rtimes_\theta B}-\overline{A'}\hspace*{3pt}\overline{B}$ from the proof of Theorem 3.3 in \cite{D}. Let $S=(A'+e_A)\times B\backslash e_A$, then
\[\overline{S}^2=\left(\overline{A'}\hspace*{3pt}\overline{B}+\overline{B}-e_A\right)^2\hspace{163pt}\]
\[\hspace{7pt}=(\overline{A'}\hspace*{3pt}\overline{B})^2+(\overline{B}-e_A)^2+(\overline{B}-e_A)\overline{A'}\hspace*{3pt}\overline{B}+\overline{A'}\hspace*{3pt}\overline{B}(\overline{B}-e_A)\]
\[\hspace{8pt}=|A'|\overline{A\rtimes_\theta B}-\overline{A'}\hspace*{3pt}\overline{B}+(|B|-2)\overline{B}+e_A+(|B|-1)\overline{A'}\hspace*{3pt}\overline{B}\]
\[+\overline{A}\hspace*{3pt}\overline{B}-\overline{B}-\overline{A'}\hspace*{3pt}\overline{B}\hspace{140pt}\]
\[=(|A'|+1)\overline{A\rtimes_\theta B}+(|B|-3)\overline{B}+e_A+(|B|-3)\overline{A'}\hspace*{3pt}\overline{B}\]
\[=(|A'|+1)\overline{A\rtimes_\theta B}+(|B|-3)\overline{S}+(|B|-2)e.\hspace{39pt}\]
Thus
\[\overline{S}^2=(|A'|+1)(\overline{A\rtimes_\theta B}-e-\overline{S})+(|A'|+|B|-2)\overline{S}+(|A'|+|B|-1)e.\]
It now only remains to note that
$|B|=q$, $|A'|=\frac{m-1}{q}$, $|A\rtimes_\theta B|=mq$, $|(A'+e_A)\times B\backslash e_A|=(\frac{m-1}{q}+1)q-1=m+q-2$. Hence from Lemma \ref{l-7}, Cayley graph $\mathcal{C}{(A\rtimes_\theta B, (A'+e_A)\times B\backslash e_A)}$ is a DSRG with the parameters
    \[(mq, m+q-2, \frac{m-1}{q}+1, \frac{m-1}{q}+q-2, \frac{m-1}{q}+q-1)\]
\end{proof}

\subsection{\texorpdfstring{$(p^{2}n, p(p-1)n, n((p-1)^2+1), n\frac{(p-1)^3-1}{p-1}, n((p-1)^2+1))-$}{Lg}DSRG }
In the previous sections, we concentrate on the Cayley graph $\mathcal{C}{(G, S)}$, $G$ is a semidirect product of two cyclic group. In this section, we discuss Cayley graph $\mathcal{C}{(G, S)}$ with $G=(C_p\rtimes_{\theta(s)}C_n)\rtimes_\vartheta B$, where
$C_p=\langle a\rangle$, $C_n=\langle x\rangle$, and $B=\langle y|y^{p}=e_{B}\rangle$ is a  cyclic group of order $p$ with generator $y$.
Let $s$ be an integer such that $(s, p)=1$, $s\not\equiv1(mod \hspace*{3pt}p)$, $s^{n}\equiv1(mod \hspace*{3pt}p)$.\;The map $\beta_s\in{Aut\hspace*{3pt}\langle a\rangle}$ given by $\beta_s :a^i\rightarrow a^{si}$ is an automorphism, and the homomorphism $\theta(s)$ from $\langle x\rangle$ to $Aut\; \langle a\rangle$ is defined by: $x^\alpha\rightarrow \beta_s^\alpha$.

Let $D_{p, n, s}=C_p\rtimes_{\theta(s)}C_n$, the inner automorphism $f(g)\in{Aut\hspace*{3pt}D_{p, n, s}}$ is defined by $f(g)$: $\phi\rightarrow g^{-1}\phi g$, for each $\phi\in D_{p, n, s}$. And the map $\vartheta$: $B\rightarrow Aut\hspace*{3pt}D_{p, n, s}$ given by $\vartheta(y^u)=f(a^u)$\hspace*{3pt}is a homomorphism. Thus,
$D_{p, n, s}\rtimes_\vartheta B$\hspace*{3pt}is a group of order $p^2n$. From the definition, we can obtain $x^ta^u=a^{us^t}x^t$ easily.

Let $\mathcal{E}(n)$ denote with the set of positive integers that are both less n and coprime to n,\;i.e.$\mathcal{E}(n)=\{q|1\leq q\leq{n-1}, (q, n)=1\}$,\;so $|\mathcal{E}(n)|=\varphi(n)$.

The next two Lemmas will be used in the proof of Theorem \ref{t-14}.
\begin{lem}\label{l-12}
Let $A=C_{p^l}=\langle a\rangle$ and $A'=\{a^k|k\in\mathcal{E}(p^l)\}$.\;Then
\[\overline{A'}^2=(p^l-p^{l-1})\overline{A}-p^{l-1}\overline{A'}.\]

\end{lem}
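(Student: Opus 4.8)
The plan is to exploit the fact that $A'=\{a^k : \gcd(k,p^l)=1\}$ is exactly the set of generators of the cyclic group $A$, hence the complement in $A$ of its unique subgroup of order $p^{l-1}$. So I would first put $A''=\langle a^p\rangle$, the cyclic subgroup of $A$ of order $p^{l-1}$. Since $a^k$ generates $A$ precisely when $p\nmid k$, we have the disjoint decomposition $A=A'\,\dot\cup\,A''$ as sets, and therefore $\overline{A'}=\overline{A}-\overline{A''}$ in the group ring $\mathbb{Z}[A]$.

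Next I would record the three elementary group-ring identities needed: for any finite group $G$ one has $\overline{G}^{\,2}=|G|\,\overline{G}$, and for any subgroup $H\le G$ one has $\overline{H}\,\overline{G}=\overline{G}\,\overline{H}=|H|\,\overline{G}$ (each element of $G$ is produced exactly $|H|$ times). Here $A$ is cyclic and $A''\le A$, so applying these with $(G,H)=(A,A'')$ and with $G=A''$ gives $\overline{A}^{\,2}=p^l\,\overline{A}$, $\ \overline{A}\,\overline{A''}=p^{l-1}\,\overline{A}$, and $\overline{A''}^{\,2}=p^{l-1}\,\overline{A''}$.

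Then it is a one-line expansion:
\[
\overline{A'}^{\,2}=\bigl(\overline{A}-\overline{A''}\bigr)^{2}=\overline{A}^{\,2}-2\,\overline{A}\,\overline{A''}+\overline{A''}^{\,2}=p^l\overline{A}-2p^{l-1}\overline{A}+p^{l-1}\overline{A''}.
\]
Substituting $\overline{A''}=\overline{A}-\overline{A'}$ back in collapses this to $(p^l-p^{l-1})\overline{A}-p^{l-1}\overline{A'}$, which is the assertion. (An equivalent hands-on route is to compute directly the coefficient of each $a^m$ in $\overline{A'}^{\,2}=\sum_{j,k}a^{j+k}$ by counting solutions of $j+k\equiv m\ (\mathrm{mod}\ p^l)$ with $p\nmid j$, $p\nmid k$: this count is $p^l-p^{l-1}$ when $p\mid m$ and $p^l-2p^{l-1}$ when $p\nmid m$, and since $\{a^m:p\mid m\}=A''$ and $\{a^m:p\nmid m\}=A'$ the same formula drops out.)

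There is essentially no obstacle here; the computation is routine once one observes that $A\setminus A'$ is a subgroup. The only point requiring a line of care is justifying that $A''=\langle a^p\rangle$ really is a subgroup of order $p^{l-1}$ and that it is precisely the complement of $A'$ — both immediate because $A$ is cyclic of prime-power order — so that the $\overline{H}\,\overline{G}=|H|\,\overline{G}$ identity is legitimately applicable.
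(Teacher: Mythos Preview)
Your proof is correct and follows essentially the same route as the paper: both write $\overline{A'}=\overline{A}-\sum_{k=1}^{p^{l-1}}a^{pk}$ and expand the square. The only cosmetic difference is that you name $A''=\langle a^p\rangle$ as a subgroup and invoke the identity $\overline{A''}^{\,2}=|A''|\,\overline{A''}$ directly, whereas the paper verifies that same identity by hand via a complete-residue-system argument; this is a slightly cleaner packaging of the identical computation.
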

\begin{proof}
We compute $\overline{A'}^2$ in the group ring $\mathbb{Z}[C_{p^l}]$:
\[\overline{A'}^2=\left(\overline{A}-\sum_{k=1}^{p^{l-1}}a^{pk}\right)^2\hspace*{47pt}\]
\[\hspace*{80pt}=\overline{A}^2-\sum_{k=1}^{p^{l-1}}a^{pk}\overline{A}-\overline{A}\sum_{k=1}^{p^{l-1}}a^{pk}+\left(\sum_{k=1}^{p^{l-1}}a^{pk}\right)^2\]
\[\hspace*{57pt}=(p^l-2p^{l-1})\overline{A}+\sum_{j=1}^{p^{l-1}}\left(\sum_{k=1}^{p^{l-1}}a^{p(k+j)}\right).\]

For each $1\leq j\leq p^{l-1}$, when $k$ takes over the complete residue system of module $p^{l-1}$, the $k+j$ also takes over the complete residue system of module $p^{l-1}$. Then we can get that $\sum\limits_{k=1}^{p^{l-1}}a^{p(k+j)}=\sum\limits_{k=1}^{p^{l-1}}a^{pk}$. Thus we can obtain that
\[\hspace*{25pt}\overline{A'}^2=(p^l-2p^{l-1})\overline{A}+p^{l-1}\left(\sum_{k=1}^{p^{l-1}}a^{pk}\right)\]
\[\hspace*{27pt}=(p^l-2p^{l-1})\overline{A}+p^{l-1}(\overline{A}-\overline{A'})\]
\[\hspace*{9pt}=(p^l-p^{l-1})\overline{A}-p^{l-1}\overline{A'}.\hspace*{11pt}\]
\end{proof}
\begin{lem}\label{l-13}
Let $H\subseteq\{0, 1, \cdots, p-1\}$, $T\subseteq\{0, 1, \cdots, n-1\}$, $S=\{a^lx^i|l\in H, i\in T\}\times B$. Then
\[\overline{S}^2=\sum_{u=0}^{m-1}\sum_{l', l\in H}\sum_{t', i\in T}a^{l'+(l-u+us^i)s^{i'}}x^{i'+i}\overline{B}.\]
\end{lem}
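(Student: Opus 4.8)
The plan is to expand $\overline{S}^2$ in the group ring $\mathbb{Z}[D_{p,n,s}\rtimes_\vartheta B]$ and push every occurrence of $\overline{B}$ to the right-hand end of each monomial, tracking the $a$-exponents produced by the commutation relations.

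First I would record the identities I need. Since $S=\{a^lx^i\,|\,l\in H,\ i\in T\}\times B$, we have $\overline{S}=\big(\sum_{l\in H,\,i\in T}a^lx^i\big)\overline{B}$, and $\overline{B}\,y^u=y^u\,\overline{B}=\overline{B}$ for every $u$, and $\overline{B}^2=p\,\overline{B}$. In $G=D_{p,n,s}\rtimes_\vartheta B$ the relevant rules are: $x^ta^u=a^{us^t}x^t$ (already noted in the text), $y^ua^v=a^vy^u$ (because $\vartheta(y^u)$ is conjugation by $a^u$, which fixes the abelian subgroup $\langle a\rangle$), and $y^ux^v=\vartheta(y^u)(x^v)\,y^u=(a^{-u}x^va^u)\,y^u=a^{u(s^v-1)}x^vy^u$, where the last step uses $x^va^u=a^{us^v}x^v$. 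From these I extract the two key group-ring identities $\overline{B}\,a^v=a^v\,\overline{B}$ and $\overline{B}\,x^v\,\overline{B}=\sum_{u=0}^{p-1}a^{u(s^v-1)}x^v\,\overline{B}$, the latter using $y^u\overline{B}=\overline{B}$ to absorb the trailing $y^u$.

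Second I would write $\overline{S}^2=\sum_{l,i}\sum_{l',i'}(a^lx^i\overline{B})(a^{l'}x^{i'}\overline{B})$, with both double sums ranging over $H\times T$. Using $\overline{B}\,a^{l'}=a^{l'}\overline{B}$ and then $x^ia^{l'}=a^{l's^i}x^i$, the inner monomial becomes $a^{l+l's^i}\,x^i\,\overline{B}\,x^{i'}\,\overline{B}$; applying the second key identity and then $x^i\,a^{u(s^{i'}-1)}=a^{u(s^{i'}-1)s^i}x^i$ turns it into $\sum_{u=0}^{p-1}a^{\,l+l's^i+u(s^{i'}-1)s^i}\,x^{i+i'}\,\overline{B}$. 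Summing over all indices and then swapping the dummy names $l\leftrightarrow l'$ and $i\leftrightarrow i'$ (legitimate, since the double sum is symmetric in these) rewrites the exponent of $a$ as $l'+\big(l+u(s^i-1)\big)s^{i'}=l'+(l-u+us^i)s^{i'}$ and the exponent of $x$ as $i'+i$, which is precisely the asserted formula (the upper limit ``$m-1$'' on the $u$-sum should read $p-1=|B|-1$).

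The whole thing is bookkeeping; the only delicate point is the noncommutativity, concretely the identity $\overline{B}\,x^v\,\overline{B}=\sum_{u}a^{u(s^v-1)}x^v\overline{B}$, where one must get the direction of the action $\vartheta$ right and must use $x^va^u=a^{us^v}x^v$ correctly when simplifying $a^{-u}x^va^u$. Everything else is substitution of the defining relations of $G$ and collecting like terms.
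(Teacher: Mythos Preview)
Your proof is correct and follows essentially the same route as the paper's: both hinge on the commutation identities $\overline{B}\,a^v=a^v\,\overline{B}$ and $\overline{B}\,x^v\,\overline{B}=\sum_{u}a^{u(s^v-1)}x^v\,\overline{B}$ coming from the inner-automorphism action $\vartheta$, then push $\overline{B}$ to the right and collect the $a$-exponents using $x^ta^u=a^{us^t}x^t$. The only organizational difference is that the paper first evaluates $\overline{B}\,\overline{A'}\,\overline{B}$ as a block and then left-multiplies by $\overline{A'}=\sum_{l',i'}a^{l'}x^{i'}$, whereas you expand $\overline{S}^2$ term by term and swap the dummy indices $(l,i)\leftrightarrow(l',i')$ at the end; the underlying algebra is identical, and your observation that the upper limit on the $u$-sum should be $p-1$ rather than $m-1$ is correct (this is a typo in the paper).
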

\begin{proof}
First we compute $\overline{B}\hspace*{3pt}\overline{A'}\hspace*{3pt}\overline{B}$\hspace*{3pt}in the group ring $\mathbb{Z}[D_{p, n, s}\rtimes_\vartheta B]$:
\[\overline{B}\hspace*{3pt}\overline{A'}\hspace*{3pt}\overline{B}=\left(\sum_{u=0}^{m-1} y^u\right)\left(\sum_{l\in H, i\in T}a^lx^i\right)\overline{B}\hspace{60pt}\]
\[=\sum_{l\in H, i\in T}\sum_{u=0}^{m-1} y^u(a^lx^i)\overline{B}\hspace{55pt}\]
\[=\sum_{l\in H, i\in T}\sum_{u=0}^{m-1} [\vartheta(y^u)(a^lx^i)]y^u\overline{B}\hspace{27pt}\]
\[=\sum_{l\in H, i\in T}\sum_{u=0}^{m-1} a^{l-u}x^ia^{u}\overline{B}\hspace{50pt}\]
\[=\sum_{l\in H, i\in T}\sum_{u=0}^{m-1} a^{l-u+us^i}x^i\overline{B}.\hspace{40pt}\]
Thus
\[\overline{S}^2=(\overline{A'\times B})^2=\overline{A'}\hspace{2pt}\overline{B}\hspace{2pt}\overline{A'}\hspace{2pt}\overline{B}\hspace{65pt}\]
\[=\overline{A'}\sum_{l\in H, i\in T}\sum_{u=0}^{m-1} a^{l-u+us^i}x^i\overline{B}\hspace{30pt}\]
\[=\sum_{u=0}^{m-1}\sum_{l', l\in H}\sum_{i', i\in T}a^{l'}x^{i'}a^{l-u+us^i}x^i\overline{B}\hspace{3pt}\]
\[\hspace{15pt}=\sum_{u=0}^{m-1}\sum_{l',l\in H}\sum_{i', i\in T}a^{l'+(l-u+us^i)s^{i'}}x^{i'+i}\overline{B}.\]
\end{proof}
\begin{thm}\label{t-14}
Let $A=D_{p, n, s}=C_p\rtimes_{\theta(s)}C_n$, $p$ be a prime, $s$ be a primitive root of module $p$, $n$ be an integer such that $p-1\mid n$. $B=\langle y|y^{p}=e_{B}\rangle$, $A'=\{a^lx^i|l\in\mathcal{E}(p), i\in\{0, 1\cdots, n-1\}\}$. Then the Cayley graph $\mathcal{C}{(A\rtimes_\vartheta B, A'\times B)}$ is a directed strongly regular graph with the parameters
\[(p^{2}n, p(p-1)n, n((p-1)^2+1), n\frac{(p-1)^3-1}{p-1}, n((p-1)^2+1)).\]
\end{thm}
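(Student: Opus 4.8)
The plan is to verify the group-ring identity (7) of Lemma~\ref{l-7} for $S=A'\times B$ in $G=A\rtimes_\vartheta B$, where $|G|=p^2n$ and $|S|=|A'|\,|B|=(p-1)n\cdot p=p(p-1)n$. The starting point is the formula for $\overline S^2$ supplied by Lemma~\ref{l-13}, applied with $H=\mathcal E(p)=\{1,\dots,p-1\}$, $T=\{0,1,\dots,n-1\}$ and $B=\langle y\rangle$ of order $p$:
\[
\overline S^2=\sum_{u=0}^{p-1}\sum_{l',l\in H}\sum_{i',i\in T}a^{\,l'+(l-u+us^i)s^{i'}}x^{\,i'+i}\,\overline B .
\]
The idea is to evaluate the inner sum over $u$ by separating the two possible behaviours of the exponent $l-u+us^i=l+u(s^i-1)\pmod p$.

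First I would split the index $i$ into $T_0=\{\,i\in T:\ s^i\equiv1\pmod p\,\}$ and $T_1=T\setminus T_0$. Since $s$ is a primitive root modulo $p$, one has $s^i\equiv1\pmod p$ exactly when $(p-1)\mid i$, so $|T_0|=n/(p-1)$ (this is where $p-1\mid n$ enters) and $|T_1|=n-n/(p-1)$. For $i\in T_1$ the factor $s^i-1$ is a unit modulo $p$, so as $u$ runs over $\mathbb Z/p\mathbb Z$ the exponent runs over all residues and the $u$-sum collapses to $\overline{\langle a\rangle}:=e+a+\cdots+a^{p-1}$, independently of $l,l',i'$. For $i\in T_0$ the exponent is simply $l'+ls^{i'}$ and the $u$-sum merely contributes the scalar $p$.

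The key arithmetic step is the identity
\[
\sum_{l',l\in\mathcal E(p)}a^{\,l'+ls^{i'}}=(p-2)\,\overline{\langle a\rangle}+e ,
\]
valid for \emph{every} $i'$: the map $l\mapsto ls^{i'}$ permutes $\mathcal E(p)=(\mathbb Z/p\mathbb Z)^{\times}$, and for a fixed residue $d$ the number of pairs $(l',b)\in(\mathbb Z/p\mathbb Z)^{\times}\times(\mathbb Z/p\mathbb Z)^{\times}$ with $l'+b\equiv d$ is $p-1$ if $d=0$ and $p-2$ otherwise. Combining this with $\sum_{i'\in T}\sum_{i\in T_0}x^{\,i'+i}=|T_0|\,\overline{\langle x\rangle}$ and the analogous collapse for $T_1$, the $T_0$-part of $\overline S^2$ becomes $\tfrac{pn}{p-1}\bigl((p-2)\overline{\langle a\rangle}+e\bigr)\overline{\langle x\rangle}\,\overline B$ and the $T_1$-part becomes $(p-1)^2\cdot\tfrac{n(p-2)}{p-1}\,\overline{\langle a\rangle}\,\overline{\langle x\rangle}\,\overline B$.

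Finally I would recognise the group-ring elements that appear. Since $\langle a\rangle\trianglelefteq G$ the element $\overline{\langle a\rangle}$ is central in $\mathbb Z[G]$, $\overline{\langle a\rangle}\,\overline{\langle x\rangle}=\overline{D_{p,n,s}}$, hence $\overline{\langle a\rangle}\,\overline{\langle x\rangle}\,\overline B=\overline G$; moreover $\overline{A'}=(\overline{\langle a\rangle}-e)\overline{\langle x\rangle}=\overline{D_{p,n,s}}-\overline{\langle x\rangle}$, so $\overline{\langle x\rangle}\,\overline B=\overline G-\overline S$. Substituting these gives
\[
\overline S^2=\bigl(pn+n(p-1)(p-2)\bigr)\overline G-\tfrac{pn}{p-1}\overline S=n\bigl((p-1)^2+1\bigr)\overline G-\tfrac{pn}{p-1}\overline S ,
\]
and matching this against $te+\lambda\overline S+\mu(\overline G-e-\overline S)$ forces $\mu=t=n((p-1)^2+1)$ and $\lambda=\mu-\tfrac{pn}{p-1}=n\tfrac{(p-1)^3-1}{p-1}$; Lemma~\ref{l-7} then yields the assertion. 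I expect the hardest part to be purely the bookkeeping: keeping $\overline{\langle a\rangle},\overline{\langle x\rangle},\overline B$ in the correct order (licensed by normality of $\langle a\rangle$) and confirming that the $\mathcal E(p)$-sum is genuinely independent of $i'$ so that it can be pulled outside the $i'$-summation; once those points are settled the remainder is routine counting.
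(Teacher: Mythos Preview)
Your proposal is correct and follows essentially the same route as the paper: both start from Lemma~\ref{l-13}, split the $i$-sum according to whether $s^i\equiv 1\pmod p$, use that for $i\in T_1$ the $u$-sum collapses to $\overline{\langle a\rangle}$, and for $i\in T_0$ reduce to the identity $\sum_{l',l\in\mathcal E(p)}a^{l'+ls^{i'}}=(p-2)\overline{\langle a\rangle}+e$ (which is exactly Lemma~\ref{l-12} with $l=1$, written in the paper as $\overline X=(p-1)\overline{\langle a\rangle}-\overline{A'}$). Your final identifications $\overline{\langle a\rangle}\,\overline{\langle x\rangle}\,\overline B=\overline G$ and $\overline{\langle x\rangle}\,\overline B=\overline G-\overline S$ match the paper's, and the arithmetic agrees.
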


\begin{proof}
Let $S=A'\times B$, then from Lemma \ref{l-13} and Lemma \ref{l-12} for $l=1$, we have
\[\overline{S}^2=\sum_{u=0}^{p-1}\sum_{l', l\in\mathcal{E}(p)}\sum_{i', i=0}^{n-1}a^{l'+(l-u+us^i)s^{i'}}x^{i'+i}\overline{B}\hspace{20pt}\]
\[\hspace{56pt}=\sum_{i', i=0}^{n-1}\sum_{l', l\in\mathcal{E}(p)}a^{l'+ls^{i'}}\sum_{u=0}^{p-1}a^{u(s^i-1)s^{i'}}x^{i'+i}\overline{B}\text{(Lemma \ref{l-12})}\]
\[\hspace{4pt}=\sum_{i', i=0}^{n-1}\overline{X}\sum_{u=0}^{p-1}a^{u(s^i-1)s^{i'}}x^{i'+i}\overline{B}\hspace{54pt}\]
\[=\sum_{\substack{i', i=0\\s^i\not\equiv1(mod p)}}^{n-1}\overline{X}\sum_{u=0}^{p-1}a^{u(s^i-1)s^{i'}}x^{i'+i}\overline{B}\hspace{28pt}\]
\[+p\sum_{\substack{i', i=0\\s^i\equiv1(mod p)}}^{n-1}\overline{X}x^{i'+i}\overline{B}\triangleq \Delta_1+\Delta_2,\hspace{32pt}\]
where $\overline{X}=(p-1)\sum\limits_{l=0}^{p-1}a^{l}-\sum\limits_{l\in\mathcal{E}(p)}a^{l}$. We note that $gcd((s^i-1)s^{i'}, p)=1$ for each $i$ satisfies $s^i\not\equiv1(mod\;p)$. Then
\[\Delta_1=\sum_{\substack{i', i=0\\s^i\not\equiv1(mod p)}}^{n-1}\left((p-1)\sum\limits_{l=0}^{p-1}a^{l}-\sum\limits_{l\in\mathcal{E}(p)}a^{l}\right)\sum_{u=0}^{p-1}a^{u(s^i-1)s^{i'}}a^{i'+i}\overline{B}\]
\[=\sum_{\substack{i', i=0\\s^i\not\equiv1(mod p)}}^{n-1}\left((p-1)\sum\limits_{l=0}^{p-1}a^{l}-\sum\limits_{l\in\mathcal{E}(p)}a^{l}\right)\left(\sum_{u=0}^{p-1}a^{u}\right)x^{i'+i}\overline{B}\]
\[=(p(p-1)-\varphi(p))\sum_{\substack{i', i=0\\s^i\not\equiv1(mod p)}}^{n-1}\left(\sum_{u=0}^{p-1}a^{u}\right)x^{i'+i}\overline{B}\hspace{47pt}\]
\[=v(p-1)^2\sum_{i'=0}^{n-1}\left(\sum_{u=0}^{p-1}a^{u}\right)x^{i'}\overline{B}=v(p-1)^2\overline{A\rtimes_\vartheta B},\hspace{30pt}\]
where $v=|\{i|s^i\not\equiv1(mod p), i\in\{0, 1, \cdots, n-1\}\}|=n-\frac{n}{p-1}$. And
\[\Delta_2=p\sum_{\substack{i', i=0\\s^i\equiv1(mod p)}}^{n-1}\left((p-1)\sum_{l=0}^{p-1}a^{l}-\sum_{l\in\mathcal{E}(p)}a^{l}\right)x^{i'+i}\overline{B}\hspace{52pt}\]
\[=p(n-v)\sum_{i'=0}^{n-1}\left((p-1)\sum_{l=0}^{p-1}a^{l}-\sum_{l\in\mathcal{E}(p)}a^{l}\right)x^{i'}\overline{B}\hspace{43pt}\]
\[=p(n-v)(p-1)\overline{A\rtimes_\vartheta B}-p(n-v)\overline{S}.\hspace{84pt}\]
Thus
\[\overline{S}^2=(v(p-1)^2+p(n-v)(p-1))\overline{A\rtimes_\vartheta B}-p(n-v)\overline{S}.\hspace{12pt}\]

We note that $v(p-1)^2+p(n-v)(p-1)=n(p-1)(p-2)+pn=n(p^2-2p+2)=n((p-1)^2+1)$, so
\[\overline{S}^2=(n((p-1)^2+1)-p(n-v))(\overline{S}+e)+n((p-1)^2+1)(\overline{A\rtimes_\vartheta B}-e-\overline{S}).\]

It now only remains to note that $|A\times_\vartheta B|=p^{2}n$, $|A'\times B|=p\varphi(p)n=p(p-1)n$ and $v(p-1)^2+p(n-v)(p-1)-p(n-v)=n\frac{(p-1)^3-1}{p-1}$. Thus Cayley graph $\mathcal{C}{(A\rtimes_\vartheta B, A'\times B)}$ is a directed strongly regular graph with the parameters
\[(p^{2}n,p(p-1)n, n((p-1)^2+1), n\frac{(p-1)^3-1}{p-1}, n((p-1)^2+1)).\]

\end{proof}
\section{Constructions of DSRG by using Cayley coset graph}
For a group G and a subgroup $H\leq G$,  denote by $[G:H]$ the set of left cosets of $H$ in
G,  that is
\[[G:H]=\{xH|x\in G\}.\]
$|G:H|$ is the index  of $H$ in $G$. For any subset $S\subset G$,  we may define a digraph on $[G : H]$ as follows:
\begin{dfn}\label{d-6}

Let $G$ be a group, $H$ a subgroup of $G$,  and $S$ a subset of $G$. define
the Cayley coset graph of $G$ with respect to $H$ and $S$ to be the directed graph with vertex set
$[G : H]$ and such that,  for any $xH, yH\in V$,
$xH$ is connected to $yH$ if and only if $x^{-1}y\in HSH$,
and denote the digraph by $\mathcal{C}(G,H,HSH)$.
\end{dfn}
We note that if $H=1$, the Cayley coset graph is a Cayley graph. In this section we just give a sufficient and necessary condition
for a Cayley coset graph to be DSRG in terms of the group ring.

\begin{lem}\label{l-15}
The number of paths of length 2 from $xH$ to $yH$ in $\mathcal{C}(G,H,HSH)$ equals the
coefficient of $x^{-1}y$ in $\frac{1}{|H|}\overline{HSH}^2$.
\end{lem}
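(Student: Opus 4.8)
The plan is to set up an explicit $|H|$-to-$1$ correspondence between the ordered pairs $(g,g')\in HSH\times HSH$ with $gg'=x^{-1}y$ and the length-$2$ paths from $xH$ to $yH$ in $\mathcal{C}(G,H,HSH)$. Recall that, by Definition \ref{d-6}, a length-$2$ path from $xH$ to $yH$ is a coset $zH\in[G:H]$ with $x^{-1}z\in HSH$ and $z^{-1}y\in HSH$; since $H$ is a subgroup we have $HSH\cdot H=HSH$ and $H\cdot HSH=HSH$, so both membership conditions depend only on the coset $zH$ and not on the chosen representative $z$. On the other side, expanding $\overline{HSH}^2=\sum_{g,g'\in HSH}gg'$ shows that the coefficient of $x^{-1}y$ in $\overline{HSH}^2$ is precisely $\#\{(g,g')\in HSH\times HSH:\ gg'=x^{-1}y\}$.

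First I would define a map $\Phi$ from such pairs to cosets by $\Phi(g,g')=xgH$. Given $(g,g')$ with $g,g'\in HSH$ and $gg'=x^{-1}y$, put $z=xg$; then $x^{-1}z=g\in HSH$ and $z^{-1}y=g^{-1}x^{-1}y=g^{-1}(gg')=g'\in HSH$, so $zH=\Phi(g,g')$ is indeed a length-$2$ path from $xH$ to $yH$. Conversely, every such path $zH$ arises this way: choosing any representative $z$, set $g=x^{-1}z$ and $g'=z^{-1}y$, so that $g,g'\in HSH$, $gg'=x^{-1}y$, and $\Phi(g,g')=zH$. Hence $\Phi$ maps onto the set of length-$2$ paths from $xH$ to $yH$.

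Next I would compute the fibres of $\Phi$. Note that $(g,g')$ is determined by its first coordinate, since $gg'=x^{-1}y$ forces $g'=g^{-1}x^{-1}y$. Suppose $\Phi(g_1,g_1')=\Phi(g_2,g_2')=zH$; from $xg_1H=xg_2H$ we get $g_2=g_1h$ for a unique $h\in H$, and then $g_2'=h^{-1}g_1'$. Conversely, for every $h\in H$ the pair $(g_1h,\,h^{-1}g_1')$ again lies in $HSH\times HSH$ (using $HSH\cdot H=HSH$ and $H\cdot HSH=HSH$), satisfies $(g_1h)(h^{-1}g_1')=g_1g_1'=x^{-1}y$, and maps under $\Phi$ to $zH$. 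These $|H|$ pairs are pairwise distinct because the first coordinate $g_1h$ determines $h$. Therefore every nonempty fibre of $\Phi$ has exactly $|H|$ elements.

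Combining the two counts gives $\#\{(g,g')\in HSH\times HSH:\ gg'=x^{-1}y\}=|H|\cdot(\text{number of length-}2\text{ paths from }xH\text{ to }yH)$, which is exactly the assertion of the lemma. I do not expect any serious obstacle here; the only point that needs care is verifying that the membership conditions defining a path, and the pairs $(g_1h,h^{-1}g_1')$, are insensitive to the choice of coset representative, and this is precisely where the subgroup identities $H\cdot HSH=HSH=HSH\cdot H$ enter.
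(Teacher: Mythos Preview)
Your proof is correct and follows essentially the same approach as the paper: both set up the map $(g,g')\mapsto xgH$ from ordered pairs in $HSH\times HSH$ with product $x^{-1}y$ to intermediate cosets, show it is surjective onto the length-$2$ paths, and verify that every fibre has exactly $|H|$ elements via the parametrization $(gh,h^{-1}g')$ for $h\in H$. Your write-up is in fact a bit more careful than the paper's in explicitly invoking $HSH\cdot H=H\cdot HSH=HSH$ to justify well-definedness.
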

\begin{proof}
The coefficient of $x^{-1}y$ in $\overline{HSH}^2$ is the number of ordered pairs $(x_1, x_2)\in HSH\times HSH$ such that $x_1x_2=x^{-1}y$.

Let $\mathcal{Q}$ be the set of all the ordered pair $(x_1, x_2)\in HSH\times HSH$ such that $x_1x_2=x^{-1}y$, $\mathcal{P}$ be the set of all paths of length 2 from $xH$ to $yH$. We define a map $\eta:\mathcal{Q} \rightarrow \mathcal{P}$ by:
\[(x_1, x_2)\rightarrow p(x,xx_1,y),\]
where $p(x,z,y)$ denote with the path $xH\rightarrow zH\rightarrow yH$ of length 2.

Let $\eta^{-1}(p)$ denote with the preimage of $p$.

At first, we prove that $\eta^{-1}(p(x,z,y))=\{(x^{-1}zh, h^{-1}z^{-1}y)|h\in H\}$.

 We can get $\{(x^{-1}zh, h^{-1}z^{-1}y)|h\in H\}\subset\eta^{-1}(p(x,z,y))$ easily.\;It now only remains to prove that $\eta^{-1}(p(x,z,y))\subset\{(x^{-1}zh, h^{-1}z^{-1}y)|h\in H\}$.\;Indeed,\;for each $(x_1, x_2)\in \eta^{-1}(xH\rightarrow zH\rightarrow yH)$, we can obtain that $xx_1H=zH$,
i.e.\;$x_1\in x^{-1}zH$, so $(x_1, x_2)\in\{(x^{-1}zh, h^{-1}z^{-1}y)|h\in H\}$.\;so $\{(x^{-1}zh, h^{-1}z^{-1}y)|h\in H\}=\eta^{-1}(p(x,z,y))$ and $|\eta^{-1}(p(x,z,y))|=|H|$.

Secondly, the map $\eta$ is a surjection obviously. Thus
\[|\mathcal{Q}|=\sum_{p(x,z,y)\in \mathcal{P}}|\eta^{-1}(p(x,z,y))|=|H||\mathcal{P}|\]
 Then the result is now immediate.
\end{proof}

\begin{thm}\label{t-16}
The Cayley graph $\mathcal{C}(G;H;HSH)$ is a directed strongly regular graph with parameters $(n, k, \mu,  \lambda, t)$ if and only if $|G:H|=n$,  $\frac{|HSH|}{|H|}= k$,  and
\[\frac{1}{|H|}\overline{HSH}^2=te+\lambda\overline{HSH}+\mu(\overline{G}-e-\overline{HSH}).\]
\end{thm}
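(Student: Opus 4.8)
The plan is to unwind the adjacency-matrix characterization of a DSRG recalled in the introduction --- conditions (i) $JA=AJ=kJ$ and (ii) $A^{2}=tI+\lambda A+\mu(J-I-A)$ --- for the specific digraph $\mathcal{C}(G,H,HSH)$, using Lemma \ref{l-15} to handle the quadratic term. Write $A$ for the adjacency matrix of $\mathcal{C}(G,H,HSH)$, with rows and columns indexed by the cosets in $[G:H]$. The first thing I would record is that $HSH$ is a union of double cosets $HgH$, so $\overline{HSH}$ is fixed under left and under right multiplication by every $h\in H$; hence so is $\overline{HSH}^{2}$, and therefore the coefficient of $x^{-1}y$ in each of $\overline{G}$, $\overline{H}$, $\overline{HSH}$, $\overline{HSH}^{2}$ depends only on the ordered pair of cosets $xH,yH$ --- indeed only on the double coset $Hx^{-1}yH$ --- not on the chosen representatives. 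This is what makes Lemma \ref{l-15}, and the comparison below, well posed.

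Next I would settle condition (i). The out-neighbours of $xH$ are the cosets $yH$ with $y\in xHSH$; since $xHSH$ is a union of left cosets of $H$, their number is $|xHSH|/|H|=|HSH|/|H|$, independent of $x$. Dually the in-neighbours of $xH$ are the cosets $yH$ with $y\in xHS^{-1}H$, and $|HS^{-1}H|=|HSH|$, so the in-degree is $|HSH|/|H|$ as well. Hence $\mathcal{C}(G,H,HSH)$ satisfies (i) with valency $k$ exactly when $k=|HSH|/|H|$, while $n=|G:H|$ is simply the number of vertices.

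For condition (ii) I would read off matrix entries as group-ring coefficients. Fix $g\in G$ and representatives with $x^{-1}y=g$; then the $(xH,yH)$-entry equals: for $J$, the coefficient of $g$ in $\overline{G}$ (namely $1$); for $A$, the coefficient of $g$ in $\overline{HSH}$ (which is $1$ if $g\in HSH$ and $0$ otherwise); for $I$, the coefficient of $g$ in $\overline{H}$ (which is $1$ if $g\in H$ and $0$ otherwise); and, by Lemma \ref{l-15}, for $A^{2}$ the coefficient of $g$ in $\frac{1}{|H|}\overline{HSH}^{2}$. Since every $g\in G$ occurs as some $x^{-1}y$ and each of these coefficients is constant along the double coset $HgH$, equating the two sides of (ii) entry by entry over all ordered pairs of vertices is equivalent to the single identity $\frac{1}{|H|}\overline{HSH}^{2}=t\,\overline{H}+\lambda\,\overline{HSH}+\mu(\overline{G}-\overline{H}-\overline{HSH})$ in $\mathbb{Z}[G]$. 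Combined with the degree computation this proves both implications at once, since the criterion is itself an ``if and only if''; specializing $H=\{e\}$ recovers Lemma \ref{l-7}.

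The step I expect to take the most care is exactly this dictionary between matrices and group-ring elements: one must pin down the group-ring element that realizes the identity matrix (it should be $\overline{H}$, which reduces to $e$ only when $H$ is trivial) together with the $\frac{1}{|H|}$ normalization that comes from the $|H|$-to-$1$ collapse in the proof of Lemma \ref{l-15}, and one must justify that ``the matrix identity holds at every ordered pair of cosets'' is equivalent to ``the identity holds coefficientwise in $\mathbb{Z}[G]$'' --- which is exactly where the $H$-bi-invariance of $\overline{HSH}^{2}$ from the first paragraph enters. One should also note that a DSRG has no loops, forcing $e\notin HSH$ (equivalently $S\cap H=\emptyset$), which is consistent with comparing the coefficients of $e$ on the two sides of (ii).
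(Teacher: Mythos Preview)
Your approach is essentially the paper's: both invoke Lemma~\ref{l-15} to convert path counts into group-ring coefficients and then read off the DSRG conditions. The paper's proof is three lines --- it picks a representative $x^{-1}y$ in each case ($e$ when $xH=yH$, an element of $HSH$ when $xH\to yH$, and otherwise) and quotes Lemma~\ref{l-15}; you supply the surrounding details (bi-invariance, the regularity check, the matrix dictionary) that the paper leaves implicit.

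One point worth flagging: your observation that the identity matrix corresponds to $\overline{H}$ rather than $e$ is sharper than the paper. The paper's proof works because it only evaluates the coefficient at the particular representative $x^{-1}y=e$ when $xH=yH$, which is enough for the combinatorial conclusion. But as a literal identity in $\mathbb{Z}[G]$ the displayed equation with $e$ cannot hold when $|H|>1$ unless $t=\mu$: by the bi-invariance you note, the coefficient of any $h\in H$ on the left equals that of $e$ (namely $t$), while on the right it is $\mu$. Your version with $\overline{H}$ in place of $e$ is the equation that holds without restriction, and it specializes to Lemma~\ref{l-7} when $H=\{e\}$. So you have not so much deviated from the paper's argument as tightened its statement.
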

\begin{proof}
Suppose\;$\frac{1}{|H|}\overline{HSH}^2=te+\lambda\overline{HSH}+\mu(\overline{G}-e-\overline{HSH})$,  By Lemma \ref{l-15} then, the number of
paths of length 2 from $xH$ to $yH$ is: $t$ if $x^{-1}y=e$; i.e. $xH=yH$; $\lambda$ if $x^{-1}y\in HSH$, i.e. $xH\rightarrow yH$;
and $\mu$ otherwise. Thus, we can get $\mathcal{C}(G,H,HSH)$ is a directed strongly regular graph. The reverse direction is a direct consequence of Lemma \ref{l-15}.
\end{proof}
\begin{rmk}
if $H=1$, we can get Lemma \ref{l-7} from Theorem \ref{t-16} easily.
\end{rmk}

\section{Sufficient and necessary condition of Cayley graph \texorpdfstring{$\mathcal{C}{(C_n\rtimes_{\theta(k)} C_m, S)}$}{Lg} to be DSRG}
In this section, we focus on the sufficient and necessary conditions of Cayley graphs $\mathcal{C}{(C_n\rtimes_{\theta(k)} C_m,A'\times C_m)}$ and $\mathcal{C}{(C_n\rtimes_{\theta(k)} C_m,(A'+e_A)\times C_m\backslash e_A)}$ to be DSRG, where $A'$ is a proper subet of $C_n\setminus e $. These sufficient and necessary conditions can be used to verify Cayley graphs which constructed in section 3.1, 3.2, and \cite{D},\cite{HO} are DSRGs.

Let $x$ be a generator for $C_n$ and $y$ a generator for $C_m$. Let $k$ be an integer such that $(k, n)=1$, $k\not\equiv1(mod \hspace*{3pt}n)$, $k^{m}\equiv1(mod \hspace*{3pt}n)$.\;Then the map $\beta_k\in{Aut\hspace*{3pt}C_n}$ given by $\beta_k$ :$x^i\rightarrow x^{ki}$ is an automorphism, and the map $\theta(k)$: $C_m\rightarrow Aut\hspace*{3pt}C_n$\hspace*{3pt} given by $\theta(k)(y^u)=\beta_k^{u}$ is a homomorphism. $A'=\{x^a|a\in H\}$,\;where $H$ is a proper subset of $\{1, 2, \cdots, n-1\}$ with $|H|=v$.

\subsection{A sufficient and necessary condition of Cayley graph \texorpdfstring{\\$\mathcal{C}{(C_n\rtimes_{\theta(k)} C_m, A'\times C_m)}$}{Lg} to be DSRG}
At first, we give some basic definitions of group representation theory.

Let $V$ be a vector space over the field $\mathbb{C}$ of complex numbers and let
$GL(V)$ be the group of isomorphisms of $V$ onto itself. Let $GL_{n}(\mathbb{C})$ be the general linear group consisting of all invertible matrices of order $n$. A representation of $G$ in $V$ is a homomorphism from the group $G$ into the group $GL(V)$. Let $\rho$ and $\rho'$ be two representations of the same group $G$ in vector spaces $V$ and $V'$,\;these representations are said to be isomorphic if there exists a linear isomorphism $\omega:V\rightarrow V'$,\;which satisfies the identity
\[\omega\circ \psi(g)=\varphi(g)\circ \omega\]
for all $g\in G$.

Let $\rho$ be a representations of  group $G$ in vector spaces $V$ with $dim_{\mathbb{C}}V=n$. Choose a basis for $V$,\;and let $T:V\rightarrow \mathbb{C}^n$ be the isomorphism taking coordinates with respect to this basis.\;Then setting $\varphi_g=T\rho_gT^{-1}$,\;for $g\in G$,\;yields a representation $\varphi:G\rightarrow GL_{n}(\mathbb{C})$ isomorphic to $\rho$.\;So,\;lf $\rho$ and $\rho'$ are given in matrix form by $\varphi_g$ and $\varphi_g'$ respectively, then $\rho$ and $\rho'$ are isomorphic
means that there exists an invertible matrix $H$ such that for all $g\in G$
\[H\varphi_g=\varphi_g'H \text{\hspace{4pt}or\hspace{4pt}}\varphi_g=H^{-1}\varphi_g'H.\]

\begin{dfn}{(see \cite{FO})}\label{d-7}
Given a group $G$ and an element $g\in G$, let $A_g = A(\mathcal{C}(G, \{g\}))$,\;the adjacent matrix of Cayley graph $\mathcal{C}(G,\{g\})$.
Additionally,\;given a group $G$ and a multiset $S$ of elements of $G$,\;let $A_S = A(\mathcal{C}(G, S))$. Then we can define the map $\psi:G\rightarrow GL_{|G|}(\mathbb{C})$ gived by $g\rightarrow A_g$.
\end{dfn}
\begin{dfn}{(see \cite{FO})}\label{d-8}
Let $R_h$ be $h\times h$ matrix with entries $1$ in positions $(1, 2), (2, 3)$,$\cdots, (h-1, h), (h, 1)$.\;Let $\Omega_h=diag\{e^{2\pi i\frac{hk^{0}}{n}}, e^{2\pi i\frac{hk^{1}}{n}}, \cdots, e^{2\pi i\frac{hk^{m-1}}{n}}\}$ be $m\times m$ matrix satisfy $k^m\equiv 1(mod\hspace{3pt}n)$. Let $X=diag\{\Omega_1, \Omega_2, \cdots, \Omega_{n}\}$ and $Y=diag\{R_m, R_m,\cdots, R_m\}$ be two $n\times n$ block matrices with $m\times m$ blocks.\;
We can also define the map $\overline{\psi}:C_n\rtimes_{\theta(k)} C_m\rightarrow GL_{nm}(\mathbb{C})$ gived by $x^ay^b\rightarrow X^aY^b$.
\end{dfn}

\begin{lem}{(see \cite{FO})}\label{l-17}
$\psi$ and $\overline{\psi}$ are two representations of $C_n\rtimes_{\theta(k)} C_m$ in $GL_{nm}(\mathbb{C})$ and these two representations
of $C_n\rtimes_{\theta(k)} C_m$ are isomorphic group representations.
\end{lem}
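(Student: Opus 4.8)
The plan is to verify in turn that $\psi$ is a representation, that $\overline{\psi}$ is a representation, and that the two are isomorphic, settling the last point by a character comparison.

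\emph{$\psi$ is a representation.} By Definition~\ref{d-7} the $(h_1,h_2)$-entry of $A_g=A(\mathcal{C}(G,\{g\}))$, with $G=C_n\rtimes_{\theta(k)}C_m$, equals $1$ precisely when $h_1^{-1}h_2=g$, i.e.\ $h_2=h_1g$; thus $A_g$ is the permutation matrix of right multiplication by $g$ and in particular invertible. An entrywise computation gives $(A_gA_h)_{h_1,h_2}=\sum_{h_3}[h_3=h_1g][h_2=h_3h]=[h_2=h_1gh]=(A_{gh})_{h_1,h_2}$ and $A_e=I_{nm}$, so $\psi\colon g\mapsto A_g$ is a homomorphism $G\to GL_{nm}(\mathbb{C})$, namely the right regular representation.

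\emph{$\overline{\psi}$ is a representation.} Put $\zeta=e^{2\pi i/n}$. The group has presentation $G=\langle x,y\mid x^n=y^m=e,\ yxy^{-1}=x^{k}\rangle$, which is consistent because $k^m\equiv1\pmod n$; hence by the universal property of this presentation it suffices to check that the matrices of Definition~\ref{d-8} satisfy $X^n=I_{nm}$, $Y^m=I_{nm}$ and $YXY^{-1}=X^{k}$. The first holds since each diagonal entry of $X$ is an $n$-th root of unity, the second since $R_m^m=I_m$. For the third, $X$ and $Y$ are block diagonal with matching $m\times m$ blocks, so the identity reduces to $R_m\,\Omega_h\,R_m^{-1}=\Omega_h^{\,k}$ for every $h$; since conjugation by $R_m$ cyclically shifts the diagonal of a diagonal $m\times m$ matrix by one place, the $c$-th diagonal entry of $R_m\Omega_hR_m^{-1}$ is $\zeta^{hk^{c+1}}$ (indices read mod $m$), while that of $\Omega_h^{\,k}$ is $(\zeta^{hk^{c}})^{k}=\zeta^{hk^{c+1}}$ as well, the two matching because $\zeta^{hk^{m}}=\zeta^{h}$ by $k^m\equiv1\pmod n$. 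Consequently $X^aY^b$ is invertible for all $a,b$ and $\overline{\psi}\colon x^ay^b\mapsto X^aY^b$ is a well-defined homomorphism into $GL_{nm}(\mathbb{C})$.

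\emph{The two are isomorphic.} Both representations have degree $nm=|G|$, so over $\mathbb{C}$ it is enough to show that their characters agree. For $\psi$, $\mathrm{tr}(A_g)=\#\{h\in G:h=hg\}$, which is $nm$ when $g=e$ and $0$ otherwise. For $\overline{\psi}$ and $g=x^ay^b$: when $m\nmid b$, each block $R_m^{b}$ of $Y^b$ is a nontrivial cyclic shift and so has zero diagonal, and since $X^a$ is diagonal, $X^aY^b$ has zero diagonal and trace $0$; when $m\mid b$, $Y^b=I$ and $\mathrm{tr}(X^a)=\sum_{h=1}^{n}\sum_{c=0}^{m-1}\zeta^{hk^{c}a}=\sum_{c=0}^{m-1}\Big(\sum_{h=1}^{n}\zeta^{(k^{c}a)h}\Big)$, where, because $\gcd(k,n)=1$, the inner geometric sum equals $n$ if $n\mid a$ and $0$ otherwise, so $\mathrm{tr}(X^a)$ is $nm$ if $n\mid a$ and $0$ if not. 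As $x^ay^b=e$ iff $n\mid a$ and $m\mid b$, the characters of $\psi$ and $\overline{\psi}$ coincide, whence $\psi\cong\overline{\psi}$.

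The only genuinely delicate step is the relation $YXY^{-1}=X^{k}$ in the second part: this is exactly where the precise shape of $\Omega_h$ and the hypothesis $k^m\equiv1\pmod n$ are used, and it is what makes the construction work; the remaining steps are the routine entrywise homomorphism check and a standard root-of-unity cancellation. One may also obtain the isomorphism constructively: the vectors $f_{h,b}=\sum_{a=0}^{n-1}\zeta^{-ha}x^ay^b$ ($0\le h<n$, $0\le b<m$) form a basis of $\mathbb{C}[G]$ on which $\psi(x)$ acts by a scalar of the form $\zeta^{\pm hk^{b}}$ and $\psi(y)$ acts as an index shift in $b$, so matching it, after a suitable reindexing, with the standard basis adapted to the block structure of $X$ and $Y$ yields an explicit invertible $H$ with $H\psi(x)H^{-1}=X$ and $H\psi(y)H^{-1}=Y$, hence $H\psi(g)H^{-1}=\overline{\psi}(g)$ for all $g\in G$.
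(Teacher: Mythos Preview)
Your proof is correct. The paper does not supply its own argument for this lemma at all; it simply imports the result from the cited source \cite{FO}, so there is no ``paper's proof'' to compare against. Your write-up therefore adds genuine content: the verification that $\psi$ is the right regular representation is the standard entrywise computation, the check that $\overline{\psi}$ respects the defining relation $yxy^{-1}=x^{k}$ is carried out correctly (the diagonal-shift computation $R_m\Omega_hR_m^{-1}=\Omega_h^{k}$ is exactly the place where $k^{m}\equiv1\pmod n$ enters, as you note), and the character comparison is a clean way to conclude isomorphism over $\mathbb{C}$, since both characters vanish off the identity and equal $nm$ there. The final sketch of an explicit intertwiner via the Fourier-type vectors $f_{h,b}$ is optional and a bit loose (the sign in ``$\zeta^{\pm hk^{b}}$'' would need to be pinned down if you wanted to make that route rigorous), but it is not needed for the main argument.
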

\begin{rmk}{(see \cite{FO})}\label{r-4}
Now we know that these representations are isomorphic,\;then there exists an invertible matrix $H$ such that for each $g\in G$
\[\psi_g=H^{-1}\overline{\psi}_gH.\]
So $\psi_g$ and $\overline{\psi}_g$ will have the same characteristic polynomial and minimum  polynomial.
\end{rmk}
\begin{lem}{(see \cite{FO})}\label{l-18}
Given a group G and a multiset S of elements of G, then
\[A_S=\sum_{s\in S}A_s.\]
\end{lem}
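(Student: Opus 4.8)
The plan is to prove the identity entrywise, indexing the rows and columns of every $|G|\times|G|$ matrix in sight by the elements of $G$ in one fixed ordering. First I would recall from Definition \ref{d-2} that in $\mathcal{C}(G,S)$ there is an arc $x\to y$ precisely when $x^{-1}y\in S$; when $S$ is a multiset this should be read as saying that the number of arcs from $x$ to $y$ equals the multiplicity $m_S(x^{-1}y)$ of the element $x^{-1}y$ in $S$, so that the $(x,y)$-entry of $A_S$ is exactly $m_S(x^{-1}y)$.

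Next I would specialise to a single group element $s\in G$: the singleton Cayley graph $\mathcal{C}(G,\{s\})$ has an arc $x\to y$ iff $x^{-1}y=s$, i.e.\ iff $y=xs$, so $A_s$ is the permutation matrix of right translation by $s$ and its $(x,y)$-entry is the indicator $[\,x^{-1}y=s\,]$. Adding these up over $s$ ranging through the multiset $S$ (each $s$ counted with its multiplicity), the $(x,y)$-entry of $\sum_{s\in S}A_s$ is $\sum_{s\in S}[\,x^{-1}y=s\,]$, which counts, with multiplicity, how many members of $S$ equal the fixed element $x^{-1}y$; that is precisely $m_S(x^{-1}y)$. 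Hence $\sum_{s\in S}A_s$ and $A_S$ have the same $(x,y)$-entry for every pair $x,y\in G$, and the two matrices coincide.

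There is no real obstacle here; the statement is essentially a bookkeeping identity, linear in $S$ in the obvious sense. The only point requiring a little care is the convention that, for a multiset $S$, the Cayley (multi)graph and its adjacency matrix record multiplicities rather than mere membership — once that is fixed, the computation above is immediate. One may also phrase the argument structurally: $A_S$ is the matrix of the operator on $\mathbb{C}[G]$ sending the basis vector $e_x$ to $\sum_{s\in S}e_{xs}$, which is manifestly the sum over $s\in S$ of the right-translation operators whose matrices are the $A_s$.
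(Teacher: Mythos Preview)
Your proof is correct. Note, however, that the paper does not supply its own proof of this lemma: it is quoted from \cite{FO} and stated without argument, so there is no ``paper's proof'' to compare against. Your entrywise verification---observing that the $(x,y)$-entry of $A_S$ is the multiplicity of $x^{-1}y$ in $S$, while the $(x,y)$-entry of $\sum_{s\in S}A_s$ is $\sum_{s\in S}[x^{-1}y=s]$, and that these agree---is the natural and standard argument, and your remark about the multiset convention is exactly the point that needs care.
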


The symbol $\chi(A, \gamma)$ and $\chi_0(A, \gamma)$ are denoted with characteristic polynomial and minimum polynomial of $A$ respectively.
\begin{lem}\label{l-19}
Let matrix $A=diag\{a_1,a_2,\cdots,a_n\}J_n$,\;then the minimum polynomial $\chi_0(A, \gamma)$ of $A$ is a factor of $\gamma(\gamma-\sum\limits_{i=1}^na_i)$.\;Particularly, if $\sum\limits_{i=1}^na_i\neq 0$, then $\chi_0(A, \gamma)=\gamma(\gamma-\sum\limits_{i=1}^na_i)$.
\end{lem}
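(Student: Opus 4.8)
The plan is to reduce the whole statement to the single identity $A^2 = sA$, where I set $s=\sum_{i=1}^n a_i$. Write $A = DJ_n$ with $D=\operatorname{diag}\{a_1,\dots,a_n\}$. First I would record the elementary fact that for any $n\times n$ matrix $M$ one has $J_n M J_n = \sigma(M)\,J_n$, where $\sigma(M)$ is the sum of all entries of $M$; applying this with $M=D$ gives $J_n D J_n = s J_n$, since the entries of the diagonal matrix $D$ sum to $s=\operatorname{tr}(A)$. Multiplying on the left by $D$ yields
\[
A^2 = D J_n D J_n = D(sJ_n) = s\,DJ_n = sA .
\]
Hence $A$ is annihilated by $f(\gamma)=\gamma(\gamma-s)=\gamma^2-s\gamma$, so the minimum polynomial $\chi_0(A,\gamma)$ divides $\gamma\bigl(\gamma-\sum_{i=1}^n a_i\bigr)$. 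This already proves the first assertion.

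For the refinement, assume $s\neq 0$ (and $n\geq 2$, as is the case in all applications below). The monic divisors of $\gamma(\gamma-s)$ are $1$, $\gamma$, $\gamma-s$ and $\gamma(\gamma-s)$, so it suffices to exclude the first three. The constant $1$ is excluded because $\chi_0(A,\gamma)=1$ would force $I=0$; the factor $\gamma$ is excluded because $\operatorname{tr}(A)=\sum_{i=1}^n a_i = s\neq 0$, so $A\neq 0$; and $\gamma-s$ is excluded because $\chi_0(A,\gamma)=\gamma-s$ would give $A=sI$, which is impossible since $A=DJ_n$ has rank at most $\operatorname{rank}(J_n)=1$ while $sI$ has rank $n\geq 2$. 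Therefore $\chi_0(A,\gamma)=\gamma(\gamma-s)$, as claimed.

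There is no serious obstacle here; the only point requiring a moment's care is the last exclusion. An alternative that bypasses it is to note directly that $A$ has rank at most $1$ and trace $s\neq 0$, so its eigenvalues are $s$ (once) and $0$ (with multiplicity $n-1\geq 1$); since $A^2=sA$ shows $A$ is diagonalizable with spectrum contained in $\{0,s\}$, both $\gamma$ and $\gamma-s$ must occur in $\chi_0(A,\gamma)$, again forcing $\chi_0(A,\gamma)=\gamma(\gamma-s)$.
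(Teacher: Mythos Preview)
Your argument is correct and follows essentially the same route as the paper: both compute $A(A-sI)=0$ (equivalently $A^2=sA$) and then exclude the proper divisors of $\gamma(\gamma-s)$ by noting $A\neq 0$ and $A\neq sI$. Your justification of $A\neq sI$ via the rank bound and your explicit remark that $n\geq 2$ is needed are in fact more careful than the paper, which simply asserts $A\neq sI$ without comment.
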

\begin{proof}Let $s=\sum\limits_{i=1}^na_i$, then
\[A(A-sI)=\left(\begin{array}{cccc}
a_1 & & & \\
&a_2 & & \\
& & \ddots& \\
& & &a_n \\
\end{array}\right)J_n\left(\left(\begin{array}{cccc}
a_1 & & & \\
&a_2 & & \\
& & \ddots& \\
& & &a_n \\
\end{array}\right)J_n-sI\right)=0.\]
Thus $\chi_0(A, \gamma)|\gamma(\gamma-\sum\limits_{i=1}^na_i)$. Suppose $\sum\limits_{i=1}^na_i\neq 0$, then $A\neq 0$ and $A\neq \left(\sum\limits_{i=1}^na_i\right)I$, so $\chi_0(A, \gamma)=\gamma(\gamma-\sum\limits_{i=1}^na_i)$.
\end{proof}
\begin{dfn}\label{d-9}
Let $S_u=\sum\limits_{h=0}^{m-1}\sum\limits_{a\in H}e^{2\pi i\frac{uak^{h}}{n}}$ and $E_u(h)=\sum\limits_{a\in H}e^{2\pi i\frac{uak^{h}}{n}}$ for $0\leq u \leq n-1$, $0\leq h \leq m-1$.\;Then $S_0=vm$.
\end{dfn}
In \cite{FO}, Nathan Foxa gave an expression of characteristic polynomial of the semidirect product of
two cyclic groups and this result will be represented in the following Lemma.
\begin{lem}{(see \cite{FO})}\label{l-20}
The characteristic polynomial of the  semidirect product of two cyclic groups is given by the following:
\[\chi(A(\mathcal{C}(C_n\rtimes_{\theta(k)} C_m, S)), \gamma)=\prod_{u=0}^{n-1}\chi(\sum_{x^ay^b\in S}{(\Omega_{ua})}(R_m)^b, \gamma).\]
Particularly, let $S=A'\times C_m$, then
\[\chi(A(\mathcal{C}(C_n\rtimes_{\theta(k)} C_m, A'\times C_m)), \gamma)=\gamma^{n(m-1)}(\gamma-vm)\prod_{u=1}^{n-1}(\gamma-S_u).\]
\end{lem}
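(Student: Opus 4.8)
The plan is to build directly on the representation-theoretic machinery already set up: Lemma~\ref{l-18}, Lemma~\ref{l-17}, and Remark~\ref{r-4}. First I would record that, by Definition~\ref{d-7} and Lemma~\ref{l-18}, the adjacency matrix of $\mathcal{C}(C_n\rtimes_{\theta(k)}C_m,S)$ is $A_S=\sum_{s\in S}\psi_s$. By Lemma~\ref{l-17} together with Remark~\ref{r-4} there is a \emph{single} invertible matrix $H$ with $\psi_g=H^{-1}\overline{\psi}_gH$ for every $g$, so $A_S=\sum_{x^ay^b\in S}\psi_{x^ay^b}=H^{-1}\bigl(\sum_{x^ay^b\in S}X^aY^b\bigr)H$. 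Hence $A_S$ is similar to $\sum_{x^ay^b\in S}X^aY^b$ and the two share a characteristic polynomial, which reduces the problem to evaluating $\chi\bigl(\sum_{x^ay^b\in S}X^aY^b,\gamma\bigr)$.

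Next I would exploit the explicit block form from Definition~\ref{d-8}. Since $X=\mathrm{diag}\{\Omega_1,\dots,\Omega_n\}$ and $Y=\mathrm{diag}\{R_m,\dots,R_m\}$ are block-diagonal with $m\times m$ blocks, so is $X^aY^b$; and because each $\Omega_u$ is diagonal, $\Omega_u^a=\Omega_{ua}$, so the $u$-th block of $X^aY^b$ is $\Omega_{ua}(R_m)^b$. Summing over $s\in S$ preserves this block-diagonal shape, so $\sum_{x^ay^b\in S}X^aY^b$ is block-diagonal with $u$-th block $\sum_{x^ay^b\in S}\Omega_{ua}(R_m)^b$. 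Since the characteristic polynomial of a block-diagonal matrix is the product of those of its blocks, this yields the general identity $\chi(A(\mathcal{C}(C_n\rtimes_{\theta(k)}C_m,S)),\gamma)=\prod_{u=0}^{n-1}\chi\bigl(\sum_{x^ay^b\in S}\Omega_{ua}(R_m)^b,\gamma\bigr)$.

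For the special case $S=A'\times C_m$ I would factor the double sum: $\sum_{a\in H}\sum_{b=0}^{m-1}\Omega_{ua}(R_m)^b=\bigl(\sum_{a\in H}\Omega_{ua}\bigr)\bigl(\sum_{b=0}^{m-1}(R_m)^b\bigr)$, then note $\sum_{b=0}^{m-1}(R_m)^b=J_m$ (sum of all powers of the $m$-cycle permutation matrix) and $\sum_{a\in H}\Omega_{ua}=\mathrm{diag}\{E_u(0),\dots,E_u(m-1)\}$ by Definition~\ref{d-9}. Thus the $u$-th block is exactly a matrix of the shape treated in Lemma~\ref{l-19}, with row sum $\sum_{h=0}^{m-1}E_u(h)=S_u$; being of rank at most one, its characteristic polynomial is $\gamma^{m-1}(\gamma-S_u)$. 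Taking $u=0$ gives $S_0=vm$, and multiplying the $n$ factors gives $\gamma^{n(m-1)}(\gamma-vm)\prod_{u=1}^{n-1}(\gamma-S_u)$, as claimed.

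The main obstacle — essentially the only non-mechanical point — is the first step: one must be certain the isomorphism $\psi\simeq\overline{\psi}$ is realized by a conjugating matrix independent of the group element, so that conjugation commutes with the sum $\sum_{s\in S}$; this is exactly what Remark~\ref{r-4} provides. After that, the argument is bookkeeping with block-diagonal matrices and the identity $\sum_{b=0}^{m-1}(R_m)^b=J_m$, plus the rank-one computation packaged in Lemma~\ref{l-19}. One small care point: when $S_u=0$, Lemma~\ref{l-19} bounds only the minimal polynomial, but then the $u$-th block is nilpotent of rank at most one, so its characteristic polynomial is still $\gamma^m=\gamma^{m-1}(\gamma-S_u)$, and the stated formula holds uniformly in $u$.
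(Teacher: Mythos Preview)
Your proposal is correct and follows essentially the same route as the paper. The paper simply cites the first (general) formula from \cite{FO} rather than deriving it via Lemma~\ref{l-17}, Lemma~\ref{l-18}, and Remark~\ref{r-4} as you do, and for the $S=A'\times C_m$ case both you and the paper factor the block as $\mathrm{diag}\{E_u(0),\dots,E_u(m-1)\}J_m$ and read off $\chi=\gamma^{m-1}(\gamma-S_u)$ from the rank-one structure (the paper phrases this via $\mathbf{1}_m\mathbf{1}_m^T$ and the determinant identity, you via Lemma~\ref{l-19}/rank-one, which is the same computation).
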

\begin{proof}
The first assertion is from \cite{FO}. To prove the second assertion, we note that if $S=A'\times C_m$, then
\[\sum_{x^ay^b\in S}{(\Omega_u)}^a(R_m)^b=\sum_{a\in H}{(\Omega_{ua})}\sum_{b=0}^{m-1}(R_m)^b=\sum_{a\in H}{(\Omega_{ua})}J\]
\[=\sum_{a\in H}diag\{e^{2\pi i\frac{uak^{0}}{n}}, e^{2\pi i\frac{uak^{1}}{n}}, \cdots, e^{2\pi i\frac{uak^{m-1}}{n}}\}J\hspace{15pt}\]
\[=diag\{E_u(0), E_u(1), \cdots, E_u(m-1)\}J.\hspace{51pt}\tag{10}\]
Then
\[\chi(\sum_{x^ay^b\in S}{(\Omega_{ua})}(R_m)^b, \gamma)=|\gamma I-\sum_{a\in H}{(\Omega_{ua})}J|=|\gamma I-\sum_{a\in H}{(\Omega_{ua})}\mathbf{1}_{m}\mathbf{1}_{m}^T|\]
\[=\gamma^{m-1}(\gamma-\mathbf{1}_{m}^T\sum_{a\in H}{(\Omega_{ua})}\mathbf{1}_{m})=\gamma^{m-1}(\gamma-S_u).\hspace{98pt}\]
Thus
\[\chi(A(\mathcal{C}(C_n\rtimes_{\theta(k)} C_m, S)), \gamma)=\prod_{u=0}^{n-1}\chi(\sum_{x^ay^b\in S}{(\Omega_{ua})}(R_m)^b, \gamma)=\prod_{u=0}^{n-1}\gamma^{m-1}(\gamma-S_u).\]
Then the result follows.
\end{proof}
In a similar way, we will give an expression of minimum  polynomial of the semidirect product of two cyclic groups.

Let $\mathbf{lcm}\{f_1(\gamma),f_2(\gamma),\cdots,f_{n-1}(\gamma),f_n(\gamma)\}=\mathbf{lcm}\{f_u(\gamma)|1\leq u\leq n\}$ denote with the least common multiple polynomial among the following polynomials  $f_1(\gamma),f_2(\gamma),\cdots,f_{n-1}(\gamma),f_n(\gamma)$.
\begin{lem}\label{l-21}
The minimum  polynomial of the semidirect product of two cyclic groups is given by the following:
\[\chi_0(A(\mathcal{C}(C_n\rtimes_{\theta(k)} C_m, A'\times C_m)), \gamma)\Big|\mathbf{lcm}\{\gamma(\gamma-S_u)|0\leq u\leq n-1\}.\]
Additionally, if $S_u\neq 0$ for each $1\leq u\leq n-1$, then
\[\chi_0(A(\mathcal{C}(C_n\rtimes_{\theta(k)} C_m, A'\times C_m)), \gamma)=\mathbf{lcm}\{\gamma(\gamma-S_u)|0\leq u\leq n-1\}.\]
\end{lem}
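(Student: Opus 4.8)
The plan is to track minimum polynomials through exactly the representation-theoretic machinery that produced the characteristic polynomial in Lemma \ref{l-20}, replacing the multiplicativity of the characteristic polynomial over diagonal blocks by the corresponding fact that the minimum polynomial of a block-diagonal matrix is the least common multiple of the minimum polynomials of its blocks.

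First I would reduce from $A := A(\mathcal{C}(C_n\rtimes_{\theta(k)} C_m, A'\times C_m))$ to a conjugate block-diagonal matrix. Writing $S = A'\times C_m$, Lemma \ref{l-18} gives $A = \psi_S = \sum_{s\in S}A_s$, and by Lemma \ref{l-17} and Remark \ref{r-4} the representation $\psi$ is isomorphic to $\overline{\psi}$; summing the relation $\psi_g = H^{-1}\overline{\psi}_gH$ over $g\in S$ shows $A = H^{-1}\overline{\psi}_S H$ with $\overline{\psi}_S = \sum_{x^ay^b\in S}X^aY^b$, so $\chi_0(A,\gamma) = \chi_0(\overline{\psi}_S,\gamma)$ and it suffices to work with $\overline{\psi}_S$.

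Next, since $X = diag\{\Omega_1,\dots,\Omega_n\}$ and $Y = diag\{R_m,\dots,R_m\}$ are block-diagonal with $n$ blocks of size $m$, so is $\overline{\psi}_S$, its $u$-th block being $B_u := \sum_{x^ay^b\in S}(\Omega_{ua})(R_m)^b$ (the subscripts of $\Omega$ being periodic mod $n$, so $u$ may be taken to range over $0,\dots,n-1$). For $S = A'\times C_m$ the computation carried out for equation (10) gives $B_u = diag\{E_u(0),\dots,E_u(m-1)\}J_m$. Because the minimum polynomial of a direct sum of matrices is the lcm of the minimum polynomials of the summands, $\chi_0(\overline{\psi}_S,\gamma) = \mathbf{lcm}\{\chi_0(B_u,\gamma)\mid 0\le u\le n-1\}$. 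Now Lemma \ref{l-19} applies to each $B_u$: since $\sum_{h=0}^{m-1}E_u(h) = S_u$ by Definition \ref{d-9}, it yields $\chi_0(B_u,\gamma)\mid\gamma(\gamma-S_u)$, with equality exactly when $S_u\neq 0$. Substituting into the lcm identity gives the divisibility claim immediately; for the sharp statement one uses that $S_0 = vm\neq 0$, so that once $S_u\neq 0$ is also assumed for $1\le u\le n-1$, every $\chi_0(B_u,\gamma)$ equals $\gamma(\gamma-S_u)$ and the lcm is attained.

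There is no serious obstacle here; the argument is the bookkeeping analogue of the proof of Lemma \ref{l-20}. The only points deserving a line of care are the standard linear-algebra fact that $\chi_0$ of a block-diagonal matrix is the lcm of the $\chi_0$ of its blocks — so that coincidences among the values $S_u$ are automatically absorbed by the lcm — and the observation that the hypothesis must effectively include $S_0\neq 0$ when asserting equality, which is harmless since $S_0 = vm$.
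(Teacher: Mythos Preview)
Your argument is essentially identical to the paper's: reduce via Lemmas \ref{l-17}, \ref{l-18} and Remark \ref{r-4} to the block-diagonal matrix $\sum X^aY^b$, take the lcm of the minimum polynomials of the blocks, and apply Lemma \ref{l-19} together with equation (10) to each block. Your extra remark that $S_0=vm\neq 0$ is needed for the equality case is a point the paper leaves implicit.
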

\begin{proof}
Let $G=C_n\rtimes_{\theta(k)} C_m$, $S=A'\times C_m$,\;Then from Lemma \ref{l-18}
\[\chi_0(A(\mathcal{C}(G, S)), \gamma)=\chi_0(\sum_{s\in S}A(\mathcal{C}(G, \{x\})), \gamma)=\chi_0(\sum_{s\in S}A_s, \gamma).\]
Since $s\in G$, it can be written uniquely as $x^ay^b$ for some $0\leq a<n$ and
some $0\leq b<m$. From the Lemma \ref{l-17} and Remark \ref{r-4}, we have
\[\chi_0(\sum_{x^ay^b\in S}A_{x^ay^b}, \gamma)=\chi_0(\sum_{x^ay^b\in S}X^aY^b, \gamma).\]
From the definition of $X$ and $Y$ in Definition \ref{d-8}, we can obtain that the minimum  polynomial of $A(\mathcal{C}(C_n\rtimes_{\theta(k)} C_m, A'\times C_m)$ is
\[\mathbf{lcm}\{\chi_0(\sum\limits_{x^ay^b\in S}{(\Omega_u)}^a(R_m)^b, \gamma)|0\leq u\leq n-1\}.\tag{11}\]
We can get
\[\chi_0(\sum_{x^ay^b\in S}{(\Omega_u)}^a(R_m)^b, \gamma)\Big|\gamma(\gamma-\sum_{h=0}^{m-1}\sum\limits_{a\in H}e^{2\pi i\frac{uak^{h}}{n}})=\gamma(\gamma-S_u)\]
from Lemma \ref{l-19} and the equation (10) in Lemma \ref{l-20}.\;Thus from $(11)$, we can obtain that
\[\chi_0(A(\mathcal{C}(C_n\rtimes_{\theta(k)} C_m, A'\times C_m)), \gamma)\Big|\mathbf{lcm}\{\gamma(\gamma-S_u)|0\leq u\leq n-1\}.\]
This proves the first assertion.\;Additionally, if $S_u\neq 0$ for each $1\leq u\leq n-1$, then from Lemma \ref{l-19},\;we have
\[\chi_0(A(\mathcal{C}(C_n\rtimes_{\theta(k)} C_m, A'\times C_m)), \gamma)=\mathbf{lcm}\{\gamma(\gamma-S_u)|0\leq u\leq n-1\}.\]
The second assertion is proved.
\end{proof}

In the following theroem, we will give a sufficient and necessary condition of Cayley graph $\mathcal{C}{(C_n\rtimes_{\theta(k)} C_m, A'\times C_m)}$ to be DSRG with parameters $(nm, vm, \mu, \lambda, t)$ and propositions $(\star)$ $t=\mu$, $\frac{vm}{\mu-\lambda}=n-1$ in term of $S_1$, $S_2$, $\cdots$, $S_{n-1}$.

We note that both $(pn, vn, \frac{n}{p-1}v^2, \frac{n}{p-1}v(v-1), \frac{n}{p-1}v^2)-$DSRG and $(mq, \ m-1, \ (m-1)/q, \ ((m-1)/q)-1, \ (m-1)/q)-$DSRG satisfy propositions $(\star)$.

The $r, s, \rho, \sigma$ occuring in the following Theorems \ref{t-22},\ref{t-23} are defined as Proposition \ref{p-2}.
\begin{thm}\label{t-22}
Cayley graph $\mathcal{C}{(C_n\rtimes_{\theta(k)} C_m, A'\times C_m)}$ is a DSRG with parameters $(nm, vm, \mu, \lambda, t)$ and $t=\mu$, $\frac{vm}{\mu-\lambda}=n-1$ if and only if $S_1=S_2\cdots=S_{n-1}$ are negative integers.
\end{thm}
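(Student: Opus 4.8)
The plan is to combine the spectral formulas of Lemmas~\ref{l-20} and~\ref{l-21} with the matrix characterization (i)--(ii) of a DSRG, treating the two implications separately. Throughout, write $A$ for the adjacency matrix of $\mathcal{C}(C_n\rtimes_{\theta(k)} C_m, A'\times C_m)$, put $k=vm$ and $N=nm$, and note that $AJ=JA=kJ$ because the graph is regular and that $\operatorname{tr}(A)=0$ because $e\notin A'\times C_m$ (no loops).

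For ($\Rightarrow$), assume the graph is a DSRG with the stated parameters. Proposition~\ref{p-1} gives $d^2=(\mu-\lambda)^2+4(t-\mu)=(\mu-\lambda)^2$, so $d=\mu-\lambda$ (since $0\le\lambda<t=\mu$), and then Proposition~\ref{p-2} yields eigenvalues $k$, $\rho=0$, $\sigma=-(\mu-\lambda)<0$ with multiplicities $1$, $r=(N-1)-\frac{k}{\mu-\lambda}$, $s=\frac{k}{\mu-\lambda}$. The hypothesis $\frac{vm}{\mu-\lambda}=n-1$ turns these into $r=n(m-1)$ and $s=n-1$, so $\chi(A,\gamma)=\gamma^{n(m-1)}(\gamma-k)(\gamma-\sigma)^{n-1}$. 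On the other hand Lemma~\ref{l-20} gives $\chi(A,\gamma)=\gamma^{n(m-1)}(\gamma-vm)\prod_{u=1}^{n-1}(\gamma-S_u)$; cancelling the common factor $\gamma^{n(m-1)}(\gamma-vm)$ in the integral domain $\mathbb{C}[\gamma]$ and invoking unique factorization forces $S_u=\sigma$ for every $u$, where $\sigma=\lambda-\mu$ is a negative integer.

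For ($\Leftarrow$), assume $S_1=\cdots=S_{n-1}=c$ with $c$ a negative integer; then $v\ge1$, so $k=vm>0$ and $c\notin\{0,k\}$, and likewise $S_0=vm\notin\{0,c\}$. Lemma~\ref{l-21} then gives $\chi_0(A,\gamma)=\mathbf{lcm}\{\gamma(\gamma-vm),\gamma(\gamma-c)\}=\gamma(\gamma-k)(\gamma-c)$, so $A$ is diagonalizable with spectrum $\{k,0,c\}$; since $A$ commutes with the symmetric matrix $J$, the two are simultaneously diagonalizable, and the $k$-eigenspace of $A$ is one-dimensional (its multiplicity in $\chi(A,\gamma)$ is $1$), hence equals $\langle\mathbf{1}\rangle$. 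I would then set $\mu:=k(k-c)/N$ and verify that $A^2-cA-\mu J$ annihilates every common eigenvector: on $\mathbf{1}$ it equals $k^2-ck-\mu N=0$ by the choice of $\mu$, and on a common eigenvector orthogonal to $\mathbf{1}$ (on which $J$ acts as $0$ and $A$ as $0$ or $c$) it equals $\theta^2-c\theta=0$. Hence $A^2=\mu J+cA=(t-\mu)I+(\lambda-\mu)A+\mu J$ with $t:=\mu$ and $\lambda:=\mu+c$, which together with $AJ=JA=kJ$ is exactly conditions (i)--(ii); reading the diagonal and off-diagonal entries of this identity shows $t,\lambda,\mu$ are nonnegative integers (with $\mu=k(k-c)/N>0$), so $\mathcal{C}(C_n\rtimes_{\theta(k)} C_m, A'\times C_m)$ is a DSRG with parameters $(nm,vm,\mu,\lambda,\mu)$. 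Finally $t=\mu$ holds by construction, $\mu-\lambda=-c$, and the identity $k+c(n-1)=\operatorname{tr}(A)=0$ gives $-c=vm/(n-1)$, i.e.\ $\frac{vm}{\mu-\lambda}=n-1$.

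I expect the one genuinely delicate step to be the passage in ($\Leftarrow$) from ``all $S_u$ equal $c$'' to the matrix identity: the characteristic polynomial alone does not preclude Jordan blocks, so it is essential to use Lemma~\ref{l-21} to pin the minimum polynomial down to $\gamma(\gamma-k)(\gamma-c)$ (hence diagonalizability, with only the three eigenvalues occurring) and to locate the $k$-eigenspace as $\langle\mathbf{1}\rangle$ before splitting off the $J$-component. Dually, in ($\Rightarrow$) the crux is that $\frac{vm}{\mu-\lambda}=n-1$ is precisely what pins the multiplicity of $\sigma$ to $n-1$, so that the two factorizations of $\chi(A,\gamma)$ can be matched term by term; the remaining simultaneous-diagonalization bookkeeping and the entrywise integrality checks are routine.
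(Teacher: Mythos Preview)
Your proof is correct and follows essentially the same plan as the paper: match the characteristic polynomial from Lemma~\ref{l-20} against the DSRG spectrum for $(\Rightarrow)$, and use Lemma~\ref{l-21} to pin down the minimum polynomial and then derive $A^2=cA+\mu J$ for $(\Leftarrow)$. The only differences are cosmetic: where the paper invokes Perron--Frobenius to see that the $vm$-eigenspace is one-dimensional and then argues column-by-column that $A(A-\bar d I)=\mu J$, you instead read the simplicity of $vm$ directly off the characteristic polynomial and pass through simultaneous diagonalization with $J$; and for the final check $\frac{vm}{\mu-\lambda}=n-1$ the paper uses the multiplicity formula for $s$ from Proposition~\ref{p-2}, while you use $\operatorname{tr}(A)=0$. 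Both routes are equivalent, and your avoidance of Perron--Frobenius (which would need strong connectedness) is arguably cleaner.
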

\begin{proof}
Let $A=A(\mathcal{C}{(C_n\rtimes_{\theta(k)} C_m, A'\times C_m)})$. If $\mathcal{C}{(C_n\rtimes_{\theta(k)} C_m, A'\times C_m)}$ is a DSRG with parameters  $(nm, vm, \mu, \lambda, t)$ and $t=\mu$, $\frac{vm}{\mu-\lambda}=n-1$. Then from Proposition \ref{p-2},\;the three distinct eigenvalues of it are $vm$, $\rho=0$ and $\sigma=\lambda-\mu<0$ with multiplicities $1, r=\frac{-vm+(\mu-\lambda)\left(nm-1\right)}{\mu-\lambda}=n(m-1), s=\frac{vm+\rho \left( nm-1 \right)}{\rho -\sigma }=\frac{vm}{\mu-\lambda}=n-1$ respectively. From the Lemma \ref{l-20}, the characteristic polynomial of $A$ is $(\gamma-vm)\gamma^{n(m-1)}(\gamma-\sigma)^{n-1}=\gamma^{n(m-1)}(\gamma-vm)\prod\limits_{u=1}^{n-1}(\gamma-S_u)$,
so $S_1=S_2\cdots=S_{n-1}=\sigma<0$. \\
{\hspace*{20pt}}On the other hand, suppose $S_1=S_2\cdots=S_{n-1}$ are negative integer $\overline{d}$, then from the Lemma \ref{l-21},
the minimum polynomial of $A$ is $\mathbf{lcm}\{\gamma(\gamma-S_u)|0\leq u\leq n-1\}=\gamma(\gamma-vm)(\gamma-\overline{d})$, and
$AJ=JA=vm$.\;Then $A(A-vmI)(A-\overline{d}I)=0$.\;Let $B=A(A-\overline{d}I)$, so $(A-vmI)B=AB-vmB=0$, i.e. $AB=vmB$, then each column of $B$
is an eigenvector corresponding to simple eigenvalue $vm$ (from the Perron-Frobenius theory, can see, e.g., Horn and
Johnson\cite{RA}), but the eigenspace associated with the eigenvalue $vm$ has
dimension one and hence each column of $B$ is a suitable multiple of $\mathbf{1}_{nm}$. Let $B=(b_1\mathbf{1}_{nm}, b_2\mathbf{1}_{nm}, \cdots, b_{nm}\mathbf{1}_{nm})$, since $\mathbf{1}_{nm}^TB=\mathbf{1}_{nm}^TA(A-\overline{d}I)=vm(vm-\overline{d})\mathbf{1}_{nm}^T$, we can get ${nm}b_1={nm}b_2=\cdots={nm}b_n=vm(vm-e)$. Thus $A(A-\overline{d}I)=B=\frac{v(vm-\overline{d})}{n}J\triangleq\mu J$, then
\[A^2=\frac{v(vm-\overline{d})}{n}J+\overline{d}A=\mu I+\lambda A+\mu(J-I-A).\]
where $\mu+\overline{d}\triangleq\lambda$. Thus Cayley graph $\mathcal{C}{(C_n\rtimes_{\theta(k)} C_m, A'\times C_m)}$ is a DSRG with parameters $(nm, vm, \mu, \lambda, t)$ and $t=\mu$. Additionally, from Lemma \ref{l-20} the characteristic polynomial of $A$ is
$\chi(A(\mathcal{C}(C_n\rtimes_{\theta(k)} C_m, A'\times C_m)), \gamma)=\gamma^{n(m-1)}(\gamma-vm)\prod\limits_{u=1}^{n-1}(\gamma-\overline{d})=\gamma^{n(m-1)}(\gamma-vm)\prod\limits_{u=1}^{n-1}(\gamma-(\lambda-\mu))$, hence $\rho=0$,\;$\sigma=\lambda-\mu$, and the multiplicity of eigenvalue $\sigma=\lambda-\mu$ is $s=n-1$ which implies that $s=\frac{vm+\rho \left( nm-1 \right)}{\rho -\sigma }=\frac{vm}{\mu-\lambda}=n-1$,\;then the result follows.
\end{proof}
\begin{rmk}
We can verify Theorem \ref{t-9} from Theorem \ref{t-22}, recalling the conditions in the Theorem \ref{t-9}, Cayley graph $\mathcal{C}{(C_p\rtimes_{\theta(m)} C_n, A'\times C_n)}$ satisfies $S_0=vn$, and $$S_u=\sum\limits_{h=0}^{n-1}\sum\limits_{l\in H}e^{2\pi i\frac{ulm^{h}}{p}}=\sum\limits_{h=0}^{n-1}\sum\limits_{l\in H}e^{2\pi i\frac{ulm^{h}}{p}}=\frac{n}{p-1}\sum\limits_{h=1}^{p-1}\sum\limits_{l\in H}e^{2\pi i\frac{h}{p}}=-\frac{nv}{p-1}$$ for $1\leq u\leq p-1$, so $\mathcal{C}{(C_p\rtimes_{\theta(m)} C_n, A'\times C_n)}$ is a DSRG with parameters $(pn, vn, \mu, \lambda, t)$ such that $t=\mu$, $\lambda-\mu=-\frac{nv}{p-1}$, and $\mu=\frac{v(vn+\frac{nv}{p-1})}{p}=\frac{nv^2}{p-1}$, i.e.$(pn, vn, \frac{n}{p-1}v^2, \frac{n}{p-1}v(v-1), \frac{n}{p-1}v^2)$.
\end{rmk}

A generalized result will be exhibited in the following Theorem.
\begin{thm}\label{t-23}
Cayley graph $\mathcal{C}{(C_n\rtimes_{\theta(k)} C_m, A'\times C_m)}$ is a DSRG with given $s, \sigma<0$ if and only if $s$ numbers in $S_1, S_2\cdots, S_{n-1}$ have same value $\sigma$ and others are $0$, and if $S_u=0$, then $E_u(h)=0$ for $0\leq h\leq m-1$.
\end{thm}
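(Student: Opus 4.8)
The plan is to push the block-diagonalization of Lemma~\ref{l-17} all the way to the \emph{minimum} polynomial, and then compare with the eigenvalue/annihilator structure of a DSRG given by Propositions~\ref{p-1} and~\ref{p-2}. Write $G=C_n\rtimes_{\theta(k)}C_m$, $S=A'\times C_m$, $k=|S|=vm$, and $A=A(\mathcal{C}(G,S))=\sum_{s\in S}A_s$ (Lemma~\ref{l-18}). By Lemma~\ref{l-17} the representations $\psi$ and $\overline{\psi}$ are isomorphic, so $A$ is similar to the block-diagonal matrix $\mathrm{diag}\{B_0,\dots,B_{n-1}\}$ with $B_u=\sum_{x^ay^b\in S}(\Omega_{ua})(R_m)^b=\mathrm{diag}\{E_u(0),\dots,E_u(m-1)\}J_m$ (equation~(10)); here $B_0=vJ_m$, and for $u\ge1$ the block $B_u$ has eigenvalues $0$ (multiplicity $m-1$) and $S_u=\sum_h E_u(h)$. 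I would isolate three facts to reuse: (i) $\chi(A,\gamma)=\gamma^{n(m-1)}(\gamma-vm)\prod_{u=1}^{n-1}(\gamma-S_u)$ (Lemma~\ref{l-20}); (ii) $\chi_0(A,\gamma)=\mathbf{lcm}\{\chi_0(B_u,\gamma)\mid 0\le u\le n-1\}$, and by Lemma~\ref{l-19} each $\chi_0(B_u,\gamma)$ divides $\gamma(\gamma-S_u)$, equals $\gamma(\gamma-S_u)$ when $S_u\neq0$, and equals $\gamma$ or $\gamma^2$ when $S_u=0$ (the value $\gamma$ exactly when $B_u=0$); (iii) a $k$-regular DSRG adjacency matrix satisfies $(A-kI)(A-\rho I)(A-\sigma I)=0$ --- obtained by multiplying $A^2=tI+\lambda A+\mu(J-I-A)$ by $A-kI$ and using $AJ=kJ$ --- so $\chi_0(A,\gamma)$ is a squarefree polynomial of degree at most $3$ (the roots $k,\rho,\sigma$ are pairwise distinct).

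For the ``only if'' direction, assume $\mathcal{C}(G,S)$ is a DSRG with the prescribed $s$ and with $\sigma<0$. Fact~(i) shows $\gamma^{n(m-1)}$, with $n(m-1)\ge1$, divides $\chi(A,\gamma)$, so $0$ is an eigenvalue, hence $0\in\{\rho,\sigma\}$; as $\sigma<0$ this forces $\rho=0$, and then Proposition~\ref{p-2} gives $t=\mu$ and $\sigma=\lambda-\mu$. Writing the DSRG characteristic polynomial as $\gamma^{\,r}(\gamma-vm)(\gamma-\sigma)^{s}$ with $r=nm-1-s$, comparing with fact~(i) and cancelling common factors yields $\prod_{u=1}^{n-1}(\gamma-S_u)=\gamma^{\,n-1-s}(\gamma-\sigma)^{s}$, i.e.\ exactly $s$ of $S_1,\dots,S_{n-1}$ equal $\sigma$ and the remaining $n-1-s$ equal $0$. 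It remains to rule out $S_{u_0}=0$ with $B_{u_0}\neq0$: in that case $\chi_0(B_{u_0},\gamma)=\gamma^2$ by fact~(ii), hence $\gamma^2\mid\chi_0(A,\gamma)$, contradicting fact~(iii) since $vm>0>\sigma$ makes $\gamma(\gamma-vm)(\gamma-\sigma)$ squarefree. Therefore $S_{u_0}=0$ implies $E_{u_0}(h)=0$ for all $h$.

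For the converse, suppose $s$ of the $S_u$ ($1\le u\le n-1$) equal $\sigma$, the remaining $n-1-s$ equal $0$, and $E_u(h)=0$ for all $h$ whenever $S_u=0$. Then every block satisfies the cubic $B_u(B_u-vmI)(B_u-\sigma I)=0$: for $B_0=vJ_m$ already $B_0(B_0-vmI)=0$; for $S_u=\sigma\neq0$, $\chi_0(B_u,\gamma)=\gamma(\gamma-\sigma)$ by Lemma~\ref{l-19}; and for $S_u=0$ the hypothesis forces $B_u=0$. Consequently $A(A-vmI)(A-\sigma I)=0$ and $AJ=JA=vmJ$. Now I would repeat the argument of Theorem~\ref{t-22}: put $B:=A(A-\sigma I)$; from $(A-vmI)B=0$ we get $AB=vmB$, and since $vm$ is a simple eigenvalue with eigenvector $\mathbf{1}$ (Perron--Frobenius, cf.\ \cite{RA}; here simplicity is visible from $\chi(A,\gamma)$), every column of $B$ is a scalar multiple of $\mathbf{1}$; evaluating $\mathbf{1}^{T}B=\mathbf{1}^{T}A(A-\sigma I)=vm(vm-\sigma)\mathbf{1}^{T}$ gives $B=\tfrac{v(vm-\sigma)}{n}J=:\mu J$. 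Hence $A^2=\mu J+\sigma A=\mu I+(\mu+\sigma)A+\mu(J-I-A)$, so $\mathcal{C}(G,S)$ is a DSRG with $t=\mu$ and $\lambda=\mu+\sigma$; counting the zeros of $\prod_{u=1}^{n-1}(\gamma-S_u)$ in fact~(i) shows $\sigma$ has multiplicity exactly $s$, as required.

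The step I expect to be the main obstacle is precisely the ``only if'' control of the extra condition $E_u(h)=0$: the characteristic polynomial alone cannot separate a DSRG from a digraph whose adjacency matrix carries a nontrivial nilpotent part, so one must work with the minimum polynomial and exploit that a genuine DSRG is annihilated by the squarefree cubic $\gamma(\gamma-vm)(\gamma-\sigma)$ --- this is exactly where Lemmas~\ref{l-19} and~\ref{l-21} (not merely Lemma~\ref{l-20}) become indispensable. A secondary technical point is justifying, in the converse direction, that $vm$ is a simple eigenvalue with eigenvector $\mathbf{1}$ so that the Perron--Frobenius reduction of Theorem~\ref{t-22} applies; this is immediate since $A$ is $vm$-regular and the characteristic polynomial computed from fact~(i) already exhibits $vm$ with multiplicity one.
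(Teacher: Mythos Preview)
Your proposal is correct and follows essentially the same route as the paper: compare the characteristic polynomial from Lemma~\ref{l-20} with the DSRG spectrum to pin down the $S_u$, use the block minimum-polynomial formula (equation~(11) in the proof of Lemma~\ref{l-21}) together with the squarefree cubic annihilator of a DSRG to force $E_u(h)=0$ when $S_u=0$, and for the converse repeat the Perron--Frobenius argument of Theorem~\ref{t-22}. If anything, you are a bit more explicit than the paper in justifying the annihilator $(A-vmI)A(A-\sigma I)=0$ directly from the DSRG equation and in checking the simplicity of the eigenvalue $vm$.
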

\begin{proof}
Let $A=A(\mathcal{C}{(C_n\rtimes_{\theta(k)} C_m, A'\times C_m)})$. Suppose  $\mathcal{C}{(C_n\rtimes_{\theta(k)} C_m, A'\times C_m)}$ is a DSRG with given $s, \sigma<0$, then from Proposition \ref{p-2} and Lemma \ref{l-20}, the characteristic polynomial of $A$ is $(\gamma-vm)(\gamma-\rho)^{r}(\gamma-\sigma)^{s}=\gamma^{n(m-1)}(\gamma-vm)\prod\limits_{u=1}^{n-1}(\gamma-S_u)$.
Thus $\rho=0$, and $s$ numbers in $S_1, S_2\cdots, S_{n-1}$ have same value $\sigma$.\;And if $S_u=0$, we can obtain that $\sum\limits_{a\in H}{(\Omega_{ua})}J=0$.\;Otherwise, $\chi_0(\sum\limits_{a\in H}{(\Omega_{ua})}J, \gamma)=\gamma^2$, then from lemma \ref{l-21},\;the power of factor $\gamma$ in $\chi_0(A)$ greater than 1, this is a contradiction to $\chi_0(A)=(\gamma-vm)\gamma(\gamma-\sigma)$. Thus from equation (10),\;we can get $E_u(h)=0$ for each $0\leq h\leq m-1$. \\
{\hspace*{20pt}}On the other hand, if $S_1, S_2\cdots, S_{n-1}$ satisfy the conditions, then from the Lemma \ref{l-21},
the minimum polynomial of $A$ is $\mathbf{lcm}\{\gamma(\gamma-vm),\gamma,\gamma-\sigma\}=(\gamma-vm)\gamma(\gamma-\sigma)$.\;Similar to the proof of Theorem \ref{t-22},\;we can also obtain that the Cayley graph $\mathcal{C}{(C_n\rtimes_{\theta(k)} C_m, A'\times C_m)}$ is a DSRG.
Additionally,\;from Lemma \ref{l-20}, the characteristic polynomial of $A$ is
$\chi(A(\mathcal{C}(C_n\rtimes_{\theta(k)} C_m, A'\times C_m)), \gamma)=(\gamma-\sigma)^{s}(\gamma-vm)\gamma^r$, then the result follows.
\end{proof}
\begin{rmk}
If Cayley graph $\mathcal{C}{(C_n\rtimes_{\theta(k)} C_m, A'\times C_m)}$ is a DSRG with parameters $(nm, vm, \mu,  \lambda, t)$, then $t=\mu$.
\end{rmk}
\subsection{A sufficient and necessary condition of Cayley graph
\texorpdfstring{\\$\mathcal{C}{(C_n\rtimes_\theta C_m, (A'+e_A)\times C_m\backslash e_A)}$}{Lg} to be DSRG}
When $n$ is even,
 Hobart, Sylvia A.,\;and T.Justin Shaw in \cite{HO} constructed Cayley graph $\mathcal{C}(G, \widehat{S})$ with dihedral group $G=D_n=\langle b,a|b^n=a^2=e,ab=b^{-1}a\rangle$,\;and
$\widehat{S}=\{b, b^2, \cdots, b^{\frac{n}{2}-1}, a, ab, ab^2, \cdots, ab^{\frac{n}{2}-1}\}=\{b^0,b^1,\cdots,b^{\frac{n}{2}-1}\}\times\{a^0,a^1\}\backslash\{b^0 \}$.\;This Cayley graph is a DSRG with parameters $(2n, n-1, \frac{n}{2}-1, \frac{n}{2}-1, \frac{n}{2})$.

In this section, we will investigate $\mathcal{C}{(C_n\rtimes_\theta C_m, (A'+e_A)\times C_m\backslash e_A)}$ respect to it. Let $H$  be a proper subset of $\{1, 2, \cdots, n-1\}$  with $|H|=v$. And $(A'+e_A)\times C_m\backslash e_A=\{x^ay^b, x^c|1\leq b\leq m-1, a\in H\cup\{0\}, c\in H\}$.
\begin{dfn}\label{d-10}
Let $S_u^*=\sum\limits_{h=0}^{m-1}\sum\limits_{a\in H\cup\{0\}}e^{2\pi i\frac{uak^{h}}{n}}$ and $E_u^*(h)=\sum\limits_{a\in H\cup\{0\}}e^{2\pi i\frac{uak^{h}}{n}}$ for $0\leq u \leq n-1$, $0\leq h \leq m-1$. Then $S_0^*=(v+1)m$.
\end{dfn}
We can also obtain the characteristic polynomial and minimum polynomial of Cayley graph $\mathcal{C}{(C_n\rtimes_{\theta(k)} C_m, (A'+e_A)\times C_m\backslash e_A))}$ as section 5.1.\;Let $S^*=(A'+e_A)\times C_m\backslash e_A$.

\begin{lem}\label{l-24}
The characteristic polynomial of the $\mathcal{C}{(C_n\rtimes_{\theta(k)} C_m, S^*)}$ is given by the following:
\[\chi(A(\mathcal{C}(C_n\rtimes_{\theta(k)} C_m, S^*)), \gamma)=(\gamma+1)^{n(m-1)}(\gamma+1-(v+1)m)\prod_{u=1}^{n-1}(\gamma+1-S_u^*).\]
\end{lem}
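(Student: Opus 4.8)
The plan is to derive this as a one-line consequence of Lemma \ref{l-20} together with the observation that $S^*$ is obtained from a set of the form (subset of $C_n)\times C_m$ by deleting the identity. Put $A'' := A'+e_A = \{x^a \mid a\in H\cup\{0\}\}$, so that $|A''| = v+1$. From the explicit description $S^* = \{x^ay^b, x^c \mid 1\le b\le m-1,\ a\in H\cup\{0\},\ c\in H\}$ one sees that $S^* = (A''\times C_m)\setminus\{e\}$: the unique element of $A''\times C_m$ that is removed is $x^0y^0 = e$. Hence, by Lemma \ref{l-18} and the fact that $A_e = I_{nm}$,
\[
A(\mathcal{C}(C_n\rtimes_{\theta(k)}C_m, S^*)) = A(\mathcal{C}(C_n\rtimes_{\theta(k)}C_m, A''\times C_m)) - I_{nm}.
\]

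Next I would apply the second assertion of Lemma \ref{l-20} with $A'$ replaced by $A''$. Its proof only uses that the generating set has the form (subset of $C_n)\times C_m$, never that the subset avoids the identity; re-running the computation of equation (10) with $H\cup\{0\}$ in place of $H$ replaces $E_u(h)$ by $E_u^*(h)$ and $S_u$ by $S_u^*$ (Definition \ref{d-10}), and, using $S_0^* = (v+1)m$ for the $u=0$ block, yields
\[
\chi\big(A(\mathcal{C}(C_n\rtimes_{\theta(k)}C_m, A''\times C_m)),\gamma\big) = \gamma^{n(m-1)}\big(\gamma-(v+1)m\big)\prod_{u=1}^{n-1}(\gamma-S_u^*).
\]

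Finally, subtracting the identity shifts the spectrum by $-1$: for any square matrix $M$ one has $\chi(M-I,\gamma) = \det\big((\gamma+1)I - M\big) = \chi(M,\gamma+1)$. Applying this with $M = A(\mathcal{C}(C_n\rtimes_{\theta(k)}C_m, A''\times C_m))$ and substituting the displayed formula gives exactly
\[
\chi\big(A(\mathcal{C}(C_n\rtimes_{\theta(k)}C_m, S^*)),\gamma\big) = (\gamma+1)^{n(m-1)}\big(\gamma+1-(v+1)m\big)\prod_{u=1}^{n-1}\big(\gamma+1-S_u^*\big),
\]
which is the claim. The only point requiring any care is the first step — verifying that $A_e = I_{nm}$ and that $e$ occurs exactly once in $A''\times C_m$, through the pair $(e_A, e_B)$ — and this is immediate from the definitions. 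After that the argument is purely the spectral-shift identity together with a verbatim re-run of Lemma \ref{l-20}, so I do not anticipate any genuine obstacle.
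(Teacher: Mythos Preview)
Your proof is correct and is essentially the same argument as the paper's. The only cosmetic difference is where the identity is subtracted: the paper works block-by-block, computing $\sum_{x^ay^b\in S^*}(\Omega_u)^a(R_m)^b=\sum_{a\in H\cup\{0\}}(\Omega_{ua})J-I$ for each $u$ and then taking the determinant $(\gamma+1)^{m-1}(\gamma+1-S_u^*)$, whereas you subtract $I$ once from the full adjacency matrix and invoke the global spectral-shift identity $\chi(M-I,\gamma)=\chi(M,\gamma+1)$; both lead to the same product. Your remark that the second assertion of Lemma~\ref{l-20} never uses $0\notin H$ is accurate, so re-running it with $H\cup\{0\}$ is legitimate.
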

\begin{proof}
We can obtain
\[\sum_{x^ay^b\in S^*}{(\Omega_u)}^a(R_m)^b=\sum_{a\in H\cup\{0\}}{(\Omega_{ua})}\sum_{b=0}^{m-1}(R_m)^b-\Omega_0=\sum_{a\in H\cup\{0\}}{(\Omega_{ua})}J-I\]
\[=\sum_{a\in H\cup\{0\}}diag\{e^{2\pi i\frac{uak^{0}}{n}}, e^{2\pi i\frac{uak^{1}}{n}}, \cdots, e^{2\pi i\frac{uak^{m-1}}{n}}\}J-I\]
\[=diag\{E_u^*(0), E_u^*(1), \cdots, E_u^*(m-1)\}J-I.\hspace{54pt}\tag{12}\]
and
\[\chi(\sum_{x^ay^b\in S^*}{(\Omega_{ua})}(R_m)^b, \gamma)=|(\gamma+1)I-\sum_{a\in H\cup\{0\}}{(\Omega_{ua})}\mathbf{1}_{m}\mathbf{1}_{m}^T|=(\gamma+1)^{m-1}(\gamma+1-S_u^*).\]
From the Lemma \ref{l-20}, we have
\[\chi(A(\mathcal{C}(C_n\rtimes_{\theta(k)} C_m, S^*),\gamma)=\prod_{u=0}^{n-1}(\gamma+1)^{m-1}(\gamma+1-S_u^*),\]
then the result follows.
\end{proof}

\begin{lem}\label{l-25}
The minimum polynomial of $\mathcal{C}{(C_n\rtimes_{\theta(k)} C_m, S^*)}$ is given by the following:
\[\chi_0(A(\mathcal{C}(C_n\rtimes_{\theta(k)} C_m, S^*)), \gamma)\Big|\mathbf{lcm}\{(\gamma+1)(\gamma+1-S_u^*)|0\leq u\leq n-1\}.\]
\end{lem}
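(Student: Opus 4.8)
The plan is to follow the argument of Lemma~\ref{l-21} almost verbatim, the only difference being an extra $-I$ in each local block arising from deleting $e_A$ from the connection set. First I would apply Lemma~\ref{l-18} to write $A(\mathcal{C}(C_n\rtimes_{\theta(k)} C_m, S^*)) = \sum_{s\in S^*} A_s = \sum_{x^ay^b\in S^*} A_{x^ay^b}$, and then Lemma~\ref{l-17} together with Remark~\ref{r-4} to pass to the isomorphic representation $\sum_{x^ay^b\in S^*} X^a Y^b$, which has the same minimum polynomial.

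Since $X$ and $Y$ are the block-diagonal matrices of Definition~\ref{d-8}, the matrix $\sum_{x^ay^b\in S^*} X^a Y^b$ is block diagonal with $n$ blocks of size $m\times m$, the $u$-th block being $M_u := \sum_{x^ay^b\in S^*}(\Omega_u)^a(R_m)^b$ for $0\leq u\leq n-1$. Because the minimum polynomial of a block-diagonal matrix is the least common multiple of the minimum polynomials of its diagonal blocks, it suffices to show $\chi_0(M_u,\gamma)\mid(\gamma+1)(\gamma+1-S_u^*)$ for each $u$.

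To establish this I would use equation~(12) from the proof of Lemma~\ref{l-24}, which identifies $M_u = \mathrm{diag}\{E_u^*(0),\dots,E_u^*(m-1)\}J - I$. Setting $D_u := \mathrm{diag}\{E_u^*(0),\dots,E_u^*(m-1)\}J$, Lemma~\ref{l-19} applied to the $m$ diagonal entries $E_u^*(0),\dots,E_u^*(m-1)$ (whose sum is $S_u^*$ by Definition~\ref{d-10}) gives $D_u(D_u-S_u^*I)=0$. Substituting $D_u = M_u+I$ yields $(M_u+I)(M_u+(1-S_u^*)I)=0$, so $\chi_0(M_u,\gamma)\mid(\gamma+1)(\gamma+1-S_u^*)$; taking the least common multiple over $u=0,1,\dots,n-1$ then finishes the proof.

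The computation is entirely routine and I do not expect a genuine obstacle; the only point demanding mild care is the index shift $D_u\mapsto D_u-I$ and confirming that Lemma~\ref{l-19} applies verbatim to these $m\times m$ blocks. As in Lemma~\ref{l-21}, one could append a converse-type statement asserting equality when all the $S_u^*$ are nonzero, but that is not part of the present claim and I would omit it.
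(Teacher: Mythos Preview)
Your proposal is correct and follows essentially the same route as the paper. The only cosmetic difference is that the paper compresses your steps 6--8 into the single observation $\chi_0(D_uJ-I,\gamma)=\chi_0(D_uJ,\gamma+1)$ and then invokes the framework of Lemma~\ref{l-21} with $\gamma$ replaced by $\gamma+1$, whereas you unfold this shift explicitly via the substitution $D_u=M_u+I$; the mathematical content is identical.
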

\begin{proof}
From the Lemma \ref{l-19} and the equation (12) in Lemma \ref{l-24},\;we have
\[\chi_0(\sum_{x^ay^b\in S^*}{(\Omega_{ua})}(R_m)^b, \gamma)=\chi_0(\sum_{a\in H\cup\{0\}}{(\Omega_{ua})}J-
I, \gamma)=\chi_0(\sum_{a\in H\cup\{0\}}{(\Omega_{ua})}J, \gamma+1),\]
then the result follows from Lemma \ref{l-21} by replacing $\gamma$ with $\gamma+1$.
\end{proof}

\begin{thm}\label{t-26}
Cayley graph $\mathcal{C}{(C_n\rtimes_{\theta(k)} C_m, S^*)}$ is a DSRG with given $r, \rho$ if and only if $r$ numbers in $S_1^*, S_2^*\cdots, S_{n-1}^*$ have same value $1+\rho$ and others are $0$, and if $S_u^*=0$, then $E_u^*(h)=0$ for $0\leq h\leq m-1$.
\end{thm}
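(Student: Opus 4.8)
The plan is to mirror the arguments of Theorems \ref{t-22} and \ref{t-23}, with $S_u^*$ in place of $S_u$ and the eigenvalue $-1$ in place of $0$, using Lemmas \ref{l-24} and \ref{l-25} in place of Lemmas \ref{l-20} and \ref{l-21}. Write $G=C_n\rtimes_{\theta(k)}C_m$, $A=A(\mathcal{C}(G,S^*))$ and $k_0=|S^*|=(v+1)m-1$. As in Definition \ref{d-8}, Lemma \ref{l-17} and Remark \ref{r-4}, $A$ is conjugate (via the intertwiner $H$) to the block-diagonal matrix whose $u$-th block is
\[B_u=\sum_{x^ay^b\in S^*}\Omega_{ua}R_m^b=\mathrm{diag}\{E_u^*(0),\dots,E_u^*(m-1)\}J-I\]
by equation (12), so that, by the same reasoning that yields equation (11) in the proof of Lemma \ref{l-21}, $\chi_0(A,\gamma)=\mathbf{lcm}\{\chi_0(B_u,\gamma)\mid 0\le u\le n-1\}$. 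Applying Lemma \ref{l-19} to $B_u+I$ gives the key trichotomy: $\chi_0(B_u,\gamma)=(\gamma+1)(\gamma+1-S_u^*)$ if $S_u^*\neq 0$; $\chi_0(B_u,\gamma)=(\gamma+1)^2$ if $S_u^*=0$ but $E_u^*(h)\neq 0$ for some $h$; and $\chi_0(B_u,\gamma)=\gamma+1$ if $E_u^*(h)=0$ for all $h$. Moreover $B_0=(v+1)J-I$ contributes $(\gamma+1)(\gamma-k_0)$.

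For the forward direction I would first observe that $\sigma=-1$: from $0\le\lambda<t$ and $0<\mu\le t$ one gets $\rho=\tfrac12(-(\mu-\lambda)+d)\ge 0$, while Lemma \ref{l-24} shows $-1$ is an eigenvalue of $A$, so among the three eigenvalues $k_0>\rho>\sigma$ the value $-1$ can only be $\sigma$. Since a DSRG with $0<t<k$ has diagonalizable adjacency matrix with minimal polynomial $(\gamma-k)(\gamma-\rho)(\gamma-\sigma)$ (as already used in Theorems \ref{t-22} and \ref{t-23}), here $\chi_0(A,\gamma)=(\gamma-k_0)(\gamma-\rho)(\gamma+1)$ and $\chi(A,\gamma)=(\gamma-k_0)(\gamma-\rho)^r(\gamma+1)^{nm-1-r}$. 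Comparing the latter with the factorization in Lemma \ref{l-24} — and using in particular that $k_0$ is a simple eigenvalue — forces each $S_u^*$ with $1\le u\le n-1$ into $\{0,1+\rho\}$, with exactly $r$ of them equal to $1+\rho$. Finally, if $S_u^*=0$ but some $E_u^*(h)\neq 0$, the trichotomy gives $(\gamma+1)^2\mid\chi_0(A,\gamma)$, contradicting that $(\gamma+1)$ occurs there to the first power; hence $E_u^*(h)=0$ for all $h$.

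For the converse, assuming the stated conditions on the $S_u^*$ (with $\rho\neq-1$, so that $1+\rho\neq0$), the trichotomy gives each $\chi_0(B_u,\gamma)\in\{(\gamma+1)(\gamma-k_0),(\gamma+1)(\gamma-\rho),\gamma+1\}$, hence $\chi_0(A,\gamma)=(\gamma+1)(\gamma-k_0)(\gamma-\rho)$ and $(A+I)(A-\rho I)(A-k_0I)=0$. Then I would run the argument of Theorem \ref{t-22}: since $k_0$ is a simple eigenvalue of $A$ (Lemma \ref{l-24}), every column of $B:=(A+I)(A-\rho I)$ is a multiple of $\mathbf{1}_{nm}$; evaluating $\mathbf{1}_{nm}^TB$ via $AJ=JA=k_0J$ yields $B=\tfrac{(v+1)(k_0-\rho)}{n}J=:\mu J$, whence $A^2=\mu J+(\rho-1)A+\rho I=tI+\lambda A+\mu(J-I-A)$ with $t=\mu+\rho$ and $\lambda=\mu+\rho-1$. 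Thus $\mathcal{C}(G,S^*)$ is a DSRG, and Lemma \ref{l-24} recomputes $\chi(A,\gamma)=(\gamma-k_0)(\gamma-\rho)^r(\gamma+1)^{nm-1-r}$, confirming the prescribed $r$ and $\rho$.

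The delicate point, and the only one not already settled by the earlier lemmas, is the implication $S_u^*=0\Rightarrow E_u^*(h)=0$: the characteristic polynomial cannot detect a nonzero nilpotent block $\mathrm{diag}\{E_u^*(h)\}J-I$ (it still contributes $(\gamma+1)^m$), so one genuinely needs the minimal polynomial — together with the fact that a DSRG with $0<t<k$ has square-free minimal polynomial — to conclude that such a block must equal $-I$. Everything else is routine bookkeeping parallel to Theorems \ref{t-22} and \ref{t-23}.
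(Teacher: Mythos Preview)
Your proof is correct and follows essentially the same route as the paper: both directions compare the block-diagonal form of $A$ (via Lemmas \ref{l-24} and \ref{l-25}) with the spectral data of a DSRG, and in the converse direction you run the same Perron--Frobenius argument as in Theorem \ref{t-22}. Your write-up is in fact more careful than the paper's in two places: you explicitly justify $\sigma=-1$ via $\rho\ge 0$ (the paper simply asserts it from the characteristic-polynomial comparison), and you spell out the trichotomy for $\chi_0(B_u,\gamma)$ that underlies the $E_u^*(h)=0$ conclusion.
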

\begin{proof}
Let $A=A(\mathcal{C}{(C_n\rtimes_{\theta(k)} C_m, S^*)})$. Suppose Cayley graph $\mathcal{C}{(C_n\rtimes_{\theta(k)} C_m, S^*)}$ is a DSRG with given $r, \rho$. Then from Proposition \ref{p-2} and Lemma \ref{l-24},\;the characteristic polynomial of $A$ is $(\gamma+1-(v+1)m)(\gamma-\rho)^{r}(\gamma-\sigma)^{s}=\prod\limits_{u=0}^{n-1}(\gamma+1)^{m-1}(\gamma+1-S_u)$.
Thus $\sigma=-1$, and $r$ numbers in $S_1^*, S_2^*\cdots, S_{n-1}^*$ has same value $1+\rho$. We note that $\chi_0(A)=(\gamma+1-vm)(\gamma+1)(\gamma-\rho)$, which imply that if $S_u^*=0$, then $\sum\limits_{a\in H\cup\{0\}}{(\Omega_{ua})}J=0$. Thus from equation (12),\;we can get $E_u^*(h)=0$ for all $0\leq h\leq m-1$.\\
{\hspace*{20pt}}On the other hand, suppose $S_1^*, S_2^*\cdots, S_{n-1}^*$ satisfy the conditions, then from the Lemma \ref{l-25},
the minimum polynomial of $A$ is $\mathbf{lcm}\{\gamma(\gamma+1-(v+1)m),\gamma+1,\gamma-\rho\}=(\gamma+1-(v+1)m)(\gamma+1)(\gamma-\rho)$. Similar to the proof of Theorem \ref{t-22}, the Cayley graph $\mathcal{C}{(C_n\rtimes_{\theta(k)} C_m, S^*)}$ is a DSRG.
Additionally, the characteristic polynomial of $A$ is
$\chi(A(\mathcal{C}(C_n\rtimes_{\theta(k)} C_m, S^*)), \gamma)=(\gamma+1)^{s}(\gamma+1-(v+1)m)(\gamma-\rho)^r$, then the result follows.
\end{proof}
\begin{rmk}

(1).\;If Cayley graph $\mathcal{C}{(C_n\rtimes_{\theta(k)} C_m, S^*)}$ is a DSRG  with parameters $(nm, m(v+1)-1, \mu,  \lambda, t)$, then $t=\lambda+1$. Indeed,\;from Proposition \ref{p-1},\;$\sigma=-1$ imples that $(\lambda-\mu+2)^2=d=(\lambda-\mu)^2+4(t-\mu)$, i.e. $t=\lambda+1$. \\
(2).\;Using Theorem \ref{t-26},\;we can verify $\mathcal{C}(D_n, \widehat{S})$ constructed by Hobart, Sylvia A.,and T.Justin Shaw is a DSRG with parameters $(2n, n-1, \frac{n}{2}-1, \frac{n}{2}-1, \frac{n}{2})$ easily.\;Indeed,
 $\mathcal{C}(D_n, \widehat{S})$ satisfies $S_0=2\times\frac{n}{2}=n$, and $S_u^*=\sum\limits_{h=0}^{1}\sum\limits_{a=0}^{\frac{n}{2}-1}e^{2\pi i\frac{ua(-1)^{h}}{n}}=\sum\limits_{a=0}^{n-1}e^{2\pi i\frac{ua}{n}}+1-e^{\pi iu }=1-(-1)^u$. Thus
  \[S_u^*=\left\{\begin{array}{c}\
\setlength{\baselineskip}{30pt}
2,\;\text{if}\;2|u\;\;\;\\
0,\;\text{if}\; 2\nmid u
\end{array}\right.\]
  and if $S_u^*=0$, then $2|u$, so $E_u^*(h)=\sum\limits_{a=0}^{\frac{n}{2}-1}e^{2\pi i\frac{u(-1)^{h}}{n}}=\frac{1-e^{2\pi i\frac{u\frac{n}{2}(-1)^{h}}{n}}}{1-e^{2\pi i\frac{u(-1)^{h}}{n}}}=0$. From Theorem \ref{t-26}, the Cayley graph $\mathcal{C}(D_n, S)$ is a DSRG with $r=\frac{n}{2}$, $\rho=1$, $\sigma=-1$. Thus the parameters of $\mathcal{C}(D_n, S)$ is $(2n, n-1, \frac{n}{2}-1, \frac{n}{2}-1, \frac{n}{2})$.\\
(3).\;We can verify Theorem \ref{t-10} from Theorem \ref{t-26}. Indeed, under the hypothesis of Theorem \ref{t-10}, the Cayley graph $\mathcal{C}{(C_p\rtimes_{\theta(m)} C_n, (A'+e_A)\times C_m\backslash e_A)}$ satisfies $$S_u^*=\sum\limits_{h=0}^{n-1}\sum\limits_{l\in H\cup\{0\}}e^{2\pi i\frac{ulm^{h}}{p}}=\frac{n}{p-1}\sum\limits_{h=1}^{p-1}\sum\limits_{l\in H}e^{2\pi i\frac{h}{p}}+n=n-\frac{nv}{p-1}$$ for each $1\leq u\leq p-1$, so Cayley graph $\mathcal{C}{(C_p\rtimes_{\theta(m)} C_n, (A'+e_A)\times C_m\backslash e_A)}$ is a DSRG with $\rho=n-1-\frac{nv}{p-1}$, $r=n-1$, and $\sigma=-1$. Thus
the parameters of it is $(pn, n(v+1)-1, \frac{nv(v+1)}{p-1}, n-2+\frac{nv^2}{p-1}, n-1+\frac{nv^2}{p-1})$.
\end{rmk}
\section{The out(in)-neighbour set and automorphsim group of DSRG (Cayley graphs)}
\subsection{The out(in)-neighbour set of DSRG(Cayley graphs)}
In this section, we make a discussion of the vertices which have the same out-neighbour set (or in-neighbour set), the following Lemma gives an upper bound of the number of these vertices. Throughout this section, digraph $D$ is directed strongly regular graph  with parameters $(n, k, \mu, \lambda, t)$.
\begin{lem}{(see \cite{JG})}\label{l-27}
If a DSRG with parameters $( n, k, \mu , \lambda , t )$ has a set $S$ of vertices,\;all of which have the same set $N$ of out-neighbour set (or they all have the same set of in-neighbour set),  then $\left| S \right|\le k-\lambda$,  $\left| S \right|\le n-2k+t$.
\end{lem}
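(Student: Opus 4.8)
The plan is to prove both inequalities by elementary counting with out-neighbour sets, and to get the in-neighbour case for free by passing to the converse digraph. First I would observe that if $D'$ is the digraph obtained from $D$ by reversing every arc, then its adjacency matrix is $A^{T}$, and transposition turns $JA=AJ=kJ$ into $JA^{T}=A^{T}J=kJ$ and turns $A^2=tI+\lambda A+\mu(J-I-A)$ into $(A^{T})^2=tI+\lambda A^{T}+\mu(J-I-A^{T})$; hence $D'$ is again a DSRG with the same parameters $(n,k,\mu,\lambda,t)$. Since out-neighbour sets in $D'$ are exactly in-neighbour sets in $D$, it suffices to prove both bounds under the hypothesis that every vertex of $S$ has out-neighbour set $N$, where $|N|=k$ by $k$-regularity.

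If $S=\emptyset$ there is nothing to prove, so I would fix $x\in S$ and record two easy facts. First, $x\notin N$, because $N=N_D^+(y)$ for every $y\in S$ and no vertex dominates itself, so $y\notin N_D^+(y)=N$; in particular $S\cap N=\emptyset$. Second, no vertex of $S$ dominates $x$: for $y\in S$, $y\to x$ would force $x\in N_D^+(y)=N$, contradicting the first fact; hence $S\cap N_D^-(x)=\emptyset$ as well (and, more generally, $S$ spans no arc).

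For $|S|\le k-\lambda$ I would fix an arbitrary $z\in N$. Since $x\to z$ and $x\neq z$, the number of paths of length two from $x$ to $z$ equals $\lambda$; but this number is $|N_D^+(x)\cap N_D^-(z)|=|N\cap N_D^-(z)|$, so $|N\cap N_D^-(z)|=\lambda$. Every $y\in S$ satisfies $z\in N=N_D^+(y)$, i.e. $y\to z$, so $S\subseteq N_D^-(z)$; combined with $S\cap N=\emptyset$ this shows that $S$ and $N\cap N_D^-(z)$ are disjoint subsets of the $k$-element set $N_D^-(z)$, whence $|S|+\lambda\le k$.

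For $|S|\le n-2k+t$ I would use that $x$ lies on exactly $t$ $2$-cycles, which says $|N_D^+(x)\cap N_D^-(x)|=t$, so the set $T=N_D^+(x)\cup N_D^-(x)$ has $|T|=2k-t$. By the two facts above, $S$ is disjoint from $N=N_D^+(x)$ and from $N_D^-(x)$, and $x\notin T$; thus $S$ and $T$ are disjoint subsets of the $n$-element vertex set, giving $|S|+(2k-t)\le n$. The whole argument is short; the only points needing attention are the disjointness bookkeeping (that $S$ avoids $N$, avoids $N_D^-(x)$, and avoids $N\cap N_D^-(z)$) and spotting the identity $|N_D^+(x)\cup N_D^-(x)|=2k-t$ coming from the $2$-cycle count, so I do not expect a genuine obstacle beyond carrying out these observations in the right order.
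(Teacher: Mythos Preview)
Your argument is correct. The reduction to the out-neighbour case via the converse digraph is valid since transposing the adjacency matrix preserves the DSRG identities with the same parameters, and both counting arguments are sound: for $|S|\le k-\lambda$ you correctly place $S$ and $N\cap N_D^-(z)$ as disjoint subsets of the $k$-element set $N_D^-(z)$, and for $|S|\le n-2k+t$ you correctly use $|N_D^+(x)\cup N_D^-(x)|=2k-t$ together with $S\cap(N_D^+(x)\cup N_D^-(x))=\emptyset$.

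As for comparison with the paper: the paper does not actually prove this lemma. It is stated with a citation to \cite{JG} (J{\o}rgensen) and used as a black box, so there is no in-paper argument to compare against. Your write-up therefore supplies a complete self-contained proof where the paper offers none.
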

\begin{dfn}\label{d-11}
Let $S_1$, $S_2$, $\cdots$, $S_t$ be the partition of $V(D)$ such that any two vertices from the same $S_i$ have the same out-neighbour set and any two vertices from distinct $S_i$ have distinct out-neighbour set. We denote this partition with $P_{out}(D)=\{S_1, S_2, \cdots, S_t\}$. For in-neighbour set,\;we can also define $P_{in}(D)$ as above.
\end{dfn}
From Lemma \ref{l-27},\;we can obtain that $|S_i|\leq\min\{k-\lambda,n-2k+t\}$ for all $0\leq i\leq t$.\;The following Lemma gives an improvement of Lemma \ref{l-27}, and this Lemma gives an upper bound of $|S_i|+|S_j|$ for distinct $i$ and $j$.
\begin{lem}\label{l-28}
If $D$ a directed strongly regular graph with parameters $(n, k, \mu, \lambda, t)$, and $\lambda>0$.\;Then for distinct $0\leq i,j\leq t$,\;$|S_i|+|S_j|\leq max\{k, n-2k+2\beta\}$, where $\beta=\min\{k-\lambda, \mu, n-2k+t\}$.
\end{lem}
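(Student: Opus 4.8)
# Proof Proposal for Lemma \ref{l-28}

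The plan is to write $S=S_i$ and $T=S_j$ and let $N_S$, $N_T$ be their common out-neighbour sets; since the classes of $P_{out}(D)$ are distinct parts (Definition \ref{d-11}), $S\cap T=\emptyset$ and $N_S\neq N_T$. The first observation I would record is that every class is an independent set in $D$: if $v,w$ lie in one class and $v\to w$, then $w\in N^+(v)=N^+(w)$, a loop, which is impossible. In particular $S\cap N_S=\emptyset=T\cap N_T$, and recall that $JA=AJ=kJ$ forces $|N^-(u)|=k$ for every vertex $u$. I would then split on whether $N_S$ and $N_T$ meet — the two cases will produce exactly the two terms $k$ and $n-2k+2\beta$ of the claimed maximum.

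\textbf{Case $N_S\cap N_T\neq\emptyset$.} Pick $u\in N_S\cap N_T$. For each $v\in S$ we have $u\in N_S=N^+(v)$, so $v\to u$, i.e. $S\subseteq N^-(u)$; symmetrically $T\subseteq N^-(u)$. Since $S$ and $T$ are disjoint and $|N^-(u)|=k$, this gives $|S_i|+|S_j|\le k$.

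\textbf{Case $N_S\cap N_T=\emptyset$.} Here $|N_S\cup N_T|=2k$, and the core estimate I would prove is $|T\cap N_S|\le\beta$ (and, by interchanging the roles of $S$ and $T$, also $|S\cap N_T|\le\beta$). Two of the three bounds defining $\beta$ are quick: $T\cap N_S\subseteq T$ gives $|T\cap N_S|\le n-2k+t$ by Lemma \ref{l-27}; and for any $w\in T\cap N_S$ and any $v\in S$ the arc $v\to w$ yields exactly $\lambda$ length-two paths $v\to z\to w$, all with $z\in N_S\cap N^-(w)$, while $T\cap N_S$ is disjoint from $N^-(w)$ because $T$ is independent, so $|T\cap N_S|+\lambda\le|N_S|=k$. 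The remaining bound $|T\cap N_S|\le\mu$ is the delicate one: choose any $x\in N_T$; then $x\in N_T=N^+(w)$ for every $w\in T$, so $T\subseteq N^-(x)$, while $x\notin N_S$ by disjointness, so for a fixed $v\in S$ there is no arc $v\to x$ and hence exactly $\mu$ length-two paths $v\to z\to x$, all with $z\in N_S\cap N^-(x)$; since $T\cap N_S\subseteq N_S\cap N^-(x)$, this gives $|T\cap N_S|\le\mu$. Granting $|T\cap N_S|,|S\cap N_T|\le\beta$ and using $S\cap N_S=T\cap N_T=\emptyset$, we get $(S\cup T)\cap(N_S\cup N_T)=(S\cap N_T)\cup(T\cap N_S)$, of size at most $2\beta$; hence $N_S\cup N_T$ leaves at least $2k-2\beta$ vertices outside $S\cup T$, so $|S_i|+|S_j|=|S\cup T|\le n-2k+2\beta$. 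Taking the maximum over the two cases finishes the proof.

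The step I expect to be the main obstacle is the inequality $|T\cap N_S|\le\mu$ in the disjoint case: because $T$ is independent, the naive attempts to place $T\cap N_S$ inside the in-neighbourhood of a vertex of $T$ fail, and one has to pass through an auxiliary vertex $x\in N_T$ and count the $\mu$ length-two paths from a vertex of $S$ to $x$. Once the cross-intersection estimate is in place, the rest is routine bookkeeping with $k$-regularity and Lemma \ref{l-27}.
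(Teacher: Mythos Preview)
Your argument is correct and follows the same overall strategy as the paper: split on whether the two common out-neighbour sets meet, and in the disjoint case bound the cross-intersections $|T\cap N_S|$, $|S\cap N_T|$ by counting length-two paths. Your execution is in fact tidier than the paper's: the paper further splits the disjoint case into four subcases according to whether $S_1\subseteq N_D^+(S_2)$ and/or $S_2\subseteq N_D^+(S_1)$, and in the subcase with no inclusions it picks up an extra bound $k-\mu$ (giving $\alpha=\min\{k-\mu,k-\lambda,\mu,n-2k+t\}\le\beta$). Since the lemma is stated only with $\beta$, that refinement is unnecessary, and your direct derivation of the three bounds defining $\beta$ avoids the subcase analysis altogether. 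Your $k-\lambda$ argument (via a vertex $w\in T\cap N_S$ and independence of $T$) is a slight variant of the paper's (which uses a vertex $y\in N_S\setminus T$), but both are valid.

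One small point to tighten: when you ``choose any $x\in N_T$'' for the $\mu$-bound, you should take $x\in N_T\setminus S$, which is nonempty because $|S|\le k-\lambda<k=|N_T|$ (this is where the hypothesis $\lambda>0$ enters, and the paper records exactly this at the outset). Otherwise, if $x$ happened to lie in $S$ and you then fixed $v=x$, the path count from $v$ to $x$ would be $t$ rather than $\mu$. With $x\notin S$ any $v\in S$ satisfies $v\ne x$ and the argument goes through as written.
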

\begin{proof}
We can assume $i=1$ and $j=2$.\;At first, we claim that $N_{D}^{+}(S_2)\nsubseteq S_1$, $N_{D}^{+}(S_1)\nsubseteq S_2$, as $|S_1|\leq k-\lambda < k=|N_{D}^{+}(S_2)|$, $|S_2|\leq k-\lambda < k=|N_{D}^{+}(S_1)|$ from Lemma \ref{l-27}.\;Since $D$ is loopless,\;we have $S_1\cap S_2=\emptyset$, $S_1\cap N_{D}^{+}(S_1)=S_2\cap N_{D}^{+}(S_2)=\emptyset$ \\
\textbf{Case 1}. $|S_1|+|S_2|\leq k$. \\
\textbf{Case 2}. If $|S_1|+|S_2|>k$, then $N_{D}^{+}(S_1)\cap N_{D}^{+}(S_2)=\emptyset$. Thus $|S_1|+|S_2|\leq n+|S_1\cap N_{D}^{+}(S_2)|+|S_2\cap N_{D}^{+}(S_1)|-2k$.\;Let $u=|S_2\cap N_{D}^{+}(S_1)|\leq |S_2|\leq n-2k+t$, $z=|S_1\cap N_{D}^{+}(S_2)|\leq |S_1|\leq n-2k+t$, then $|S_1|+|S_2|\leq n-2k+u+z$. \\
\textbf{Case 2.1}. $S_1\nsubseteq N_{D}^{+}(S_2)$, $S_2\nsubseteq N_{D}^{+}(S_1)$. See the Figure 1.

Let $v\in S_1\backslash N_{D}^{+}(S_2)$, $w\in N_{D}^{+}(S_2)\backslash S_1$, $x\in S_2\backslash N_{D}^{+}(S_1)$, $y\in N_{D}^{+}(S_1)\backslash S_2$. Since $ v\nrightarrow w$, we can obtain that $S_2\cap N_{D}^{+}(S_1)\subseteq N_{D}^{+}(v)\cap N_{D}^{-}(w)$, so $u\leq \mu$; Since $ v\nrightarrow x$, then $|N_{D}^{+}(S_1)\backslash S_2|\geq \mu$, and $u\leq k-\mu$; Since $ v\rightarrow y$, then $|N_{D}^{+}(S_1)\backslash S_2|\geq \lambda$, and $u\leq k-\lambda$. Thus $u\leq min\{k-\mu, k-\lambda, \mu, n-2k+t\}\triangleq \alpha$.\;In a similar way,\;$z\leq\alpha$,\;then $|S_1|+|S_2|\leq n-2k+u+z=n-2k+2\alpha$.
\begin{figure}
  \centering
  \includegraphics[width=8cm]{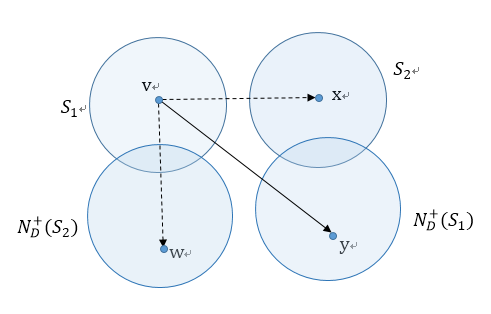}\\
   \caption{$S_1\nsubseteq N_{D}^{+}(S_2)$, $S_2\nsubseteq N_{D}^{+}(S_1)$}
\end{figure}
\\
\textbf{Case 2.2}. $S_1\nsubseteq N_{D}^{+}(S_2)$, $S_2\subseteq N_{D}^{+}(S_1)$. See the Figure 2.

We can also get $z\leq min\{k-\mu, k-\lambda, \mu, n-2k+t\}\doteq \alpha$ as \textbf{case 2.1}. To give an upper bound on $u$, let $v\in S_1\backslash N_{D}^{+}(S_2)$, $w\in N_{D}^{+}(S_2)\backslash S_1$, and $y\in N_{D}^{+}(S_1)\backslash S_2$.\;Similar to the discussion of \textbf{case 2.1},\;we can also obtain that $u\leq min\{k-\lambda, \mu, n-2k+t\}\triangleq \beta$.\;Thus $|S_1|+|S_2|\leq n-2k+u+z=n-2k+\alpha+\beta\leq n-2k+2\beta$.
\\
\textbf{Case 2.3}. $S_2\nsubseteq N_{D}^{+}(S_1)$, $S_1\subseteq N_{D}^{+}(S_2)$. We also have $|S_1|+|S_2|\leq n-2k+u+z=n-2k+\alpha+\beta\leq n-2k+2\beta$.
\\
\textbf{Case 2.4}. $S_2\subseteq N_{D}^{+}(S_1)$, $S_1\subseteq N_{D}^{+}(S_2)$. We also have $|S_1|+|S_2|\leq n-2k+2\beta$.

Base on our discussion,\;the result follows.
\begin{figure}[H]
  \centering
  \includegraphics[width=8cm]{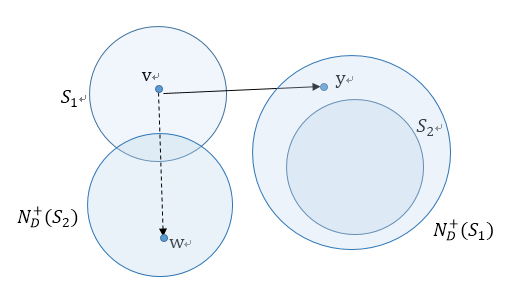}\\
   \caption{$S_1\nsubseteq N_{D}^{+}(S_2)$, $S_2\subseteq N_{D}^{+}(S_1)$}
\end{figure}
\end{proof}

\begin{thm}
Let $D$ be a directed strongly regular Cayley graph $\mathcal{C}(G, S)$ with parameters $( n, k, \mu , \lambda , t )$. Let $G_S=\{g|g\in G, gS=S\}$ be a subgroup of $G$, and $G=\bigcup\limits_{i=1}^{q}a_iG_S$ to be the left coset decomposition of $G$ with respect
to $G_S$, where $q=|G:G_S|$ is the index  of $G_S$ in $G$.\;Then we have\\
$(a)$ $P_{out}(D)=\{a_1G_S, a_2G_S, \cdots, a_qG_S\}$;\\
$(b)$ $|a_iG_S|=|G_S|\leq min\{k-\lambda,  n-2k+t\}$;\\
$(c)$ $|G_S|$ is a factor of $gcd(n, k, \lambda, \mu, t)$;\\
$(d)$ $G_S=\{1_G\}$ if $t\neq \mu$ or $gcd(n, k, \lambda, \mu, t)=1$.
\end{thm}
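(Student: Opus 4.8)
The plan is to derive all four parts from one structural observation about out-neighbour sets. In $\mathcal C(G,S)$ the out-neighbour set of a vertex $x$ is exactly $xS$, so two vertices $x,y$ lie in the same block of $P_{out}(D)$ if and only if $xS=yS$, i.e.\ $y^{-1}x\in G_S$, i.e.\ $xG_S=yG_S$. Hence the blocks of $P_{out}(D)$ are precisely the left cosets of $G_S$, which is $(a)$. Part $(b)$ is then immediate: $|a_iG_S|=|G_S|$ because left translation by $a_i$ is a bijection of $G$, and since every vertex of $a_iG_S$ has the common out-neighbour set $a_iS$, Lemma~\ref{l-27}, applied with $a_iG_S$ in the role of the set ``$S$'' of that lemma, yields $|G_S|\le k-\lambda$ and $|G_S|\le n-2k+t$.

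For $(c)$, the two divisibilities $|G_S|\mid n$ and $|G_S|\mid k$ come directly from group theory: $|G_S|\mid n$ is Lagrange's theorem for $G_S\le G$, and $|G_S|\mid k$ holds because $G_S$ acts freely on $S$ by left multiplication (we have $gS=S$ for $g\in G_S$), so $S$ is a disjoint union of $G_S$-orbits, each of size $|G_S|$. For $t$, $\lambda$, $\mu$ I would pass to the group ring $\mathbb Z[G]$, where the defining property of $G_S$ gives $\overline{G_S}\,\overline S=|G_S|\,\overline S$ and $\overline{G_S}\,\overline G=|G_S|\,\overline G$. Multiplying the identity $\overline S^{\,2}=te+\lambda\overline S+\mu(\overline G-e-\overline S)$ of Lemma~\ref{l-7} on the left by $\overline{G_S}$ and comparing with $|G_S|$ times that same identity produces $(t-\mu)\,\overline{G_S}=(t-\mu)\,|G_S|\,e$ in $\mathbb Z[G]$. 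Since $\overline{G_S}=|G_S|\,e$ can only happen when $G_S=\{1_G\}$, we conclude: either $t\ne\mu$, in which case $G_S=\{1_G\}$ and $(c)$ is trivial (and the first half of $(d)$ is proved); or $t=\mu$, in which case the identity degenerates to $\overline S^{\,2}=(\lambda-\mu)\overline S+\mu\,\overline G$.

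The remaining case $t=\mu$ of $(c)$ — showing that $|G_S|$ divides $\mu$ (equivalently $t$) and $\lambda$ — is the step I expect to be the main obstacle. The difficulty is that the left-translation action of $G_S$ no longer meshes with the combinatorial quantities defining $\mu$ and $\lambda$: the left stabiliser $G_S$ need not stabilise $S^{-1}$, so neither the set of $2$-cycles $S\cap S^{-1}$ (of size $t$) nor the $2$-path sets $S\cap xS^{-1}$ (of size $\lambda$ or $\mu$) are visibly $G_S$-invariant. My plan would be to use instead the equitable partition of $V(D)$ into the $G_S$-orbits, i.e.\ the right cosets of $G_S$; writing $A$ in the corresponding block form with $|G_S|\times|G_S|$ blocks, each block is a $\mathbb Z$-linear combination of the permutation matrices of the regular representation of $G_S$, and the identity $\overline S^{\,2}=(\lambda-\mu)\overline S+\mu\,\overline G$ descends to identities among these blocks. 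A careful accounting of which coefficients in those identities are forced to be multiples of $|G_S|$ should close the argument, but this bookkeeping is where the real work lies. Once $(c)$ is in hand, the second half of $(d)$ is immediate: if $\gcd(n,k,\lambda,\mu,t)=1$ then $|G_S|$ divides $1$, so $G_S=\{1_G\}$.
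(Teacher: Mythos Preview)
Your arguments for (a), (b), and the $t\neq\mu$ half of (d) are correct and essentially coincide with the paper's: the paper also observes $xS=yS\Leftrightarrow x^{-1}y\in G_S\Leftrightarrow xG_S=yG_S$ for (a), invokes Lemma~\ref{l-27} for (b), and for (d) multiplies the identity of Lemma~\ref{l-7} on the left by a single $g\in G_S$ (rather than by $\overline{G_S}$ as you do) to force $(t-\mu)g=(t-\mu)e$. Your version is a harmless variant.

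The real divergence is at (c). The paper's entire argument for (c) is the sentence ``the assertion (c) can be directly deduced by the Definition~\ref{d-11}''. You go much further: you dispose of $|G_S|\mid n$ and $|G_S|\mid k$ cleanly, reduce the remaining divisibilities to the case $t=\mu$, and then flag that case as ``the main obstacle'', sketching an equitable-partition attack while admitting the bookkeeping is where the real work lies. Your instinct here is exactly right---but for a stronger reason than you suspect: the remaining claim is \emph{false}, so no amount of bookkeeping will close it. Take the paper's own Theorem~\ref{t-9} with $p=3$, $m=2$, $n=2$, $H=\{1\}$: then $G=C_3\rtimes C_2\cong S_3$, $S=\{a,ax\}$, and $\mathcal C(G,S)$ is the $(6,2,1,0,1)$-DSRG. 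A direct check gives $a^2x\cdot a=ax$ and $a^2x\cdot ax=a$, so $a^2x\in G_S$ and $|G_S|=2$; the three left cosets $\{e,a^2x\},\{a,x\},\{a^2,ax\}$ are precisely $P_{out}(D)$, confirming (a) and (b). But $\gcd(6,2,1,0,1)=1$, so $|G_S|=2$ does not divide it, and (c) fails---as does the ``$\gcd=1$'' clause of (d). The underlying issue is the one you already noticed: $G_S$ stabilises $S$ on the left but need not stabilise $S$ on the right, so out-neighbour sets $xS$ are not unions of left $G_S$-cosets, and there is no reason for path counts to be divisible by $|G_S|$.

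In short: your proof of (a), (b), and the $t\neq\mu$ part of (d) is fine and matches the paper; your reduction of (c) is more careful than the paper's one-line dismissal; and the step you singled out as the obstacle cannot be completed because the statement is wrong there.
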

\begin{proof}
We note that for distinct vertices $x, y\in G$, the out-neighbour set of $x, y$ are $xS, yS$ respectively, then $xS=yS$ $\Leftrightarrow$ $x^{-1}y\in G_{S}$ $\Leftrightarrow$ $xG_{S}=yG_{S}$. Hence $P_{out}(D)=\{a_1G_{S}, a_2G_{S}, \cdots, a_qG_{S}\}$, proving $(a)$. The assertion $(b)$ follows from Lemma \ref{l-27} and the assertion $(c)$ can be directly deduced by the Definition \ref{d-11}. Finally, if $t\neq \mu$, then for each $g\in G_{S}$, we multiply $g$ on the left side of equation $(7)$, we can get
\[\overline{S}^2=tg+\lambda\overline{S}+\mu(\overline{G}-g-\overline{S})=(t-\mu)g+(\lambda-\mu)\overline{S}+\mu\overline{G}.\]
Note that $\overline{S}^2=(t-\mu)e+(\lambda-\mu)\overline{S}+\mu\overline{G}$, so $g=e$ and $G_S=\{e_G\}$. On the other hand, if $gcd(n, k, \lambda, \mu, t)=1$, then from assertion $(c)$, we can also obtain that $|G_S|=1$, $G_S=\{e_G\}$, proving $(d)$.
\end{proof}

Let $S^{-1}=\{a^{-1}|a\in S\}$, then we can also obtain the following theorem as above.
\begin{thm}
Let $D$ be a directed strongly regular Cayley graph $\mathcal{C}(G, S)$ with parameters $( n, k, \mu , \lambda , t )$.\;Let $G_{S^{-1}}=\{g|g\in G, gS^{-1}=S^{-1}\}$, and $G=\bigcup\limits_{i=1}^{q'}a_iG_{S^{-1}}$ to be the left coset decomposition of $G$ with respect
to $G_{S^{-1}}$, where $q'=|G:G_{S^{-1}}|$ is the index  of $G_{S^{-1}}$ in $G$.\;Then we have\\
$(a)$ $P_{in}(D)=\{a_1'G_{S^{-1}}, a_2'G_{S^{-1}}, \cdots, a_{q'}'G_{S^{-1}}\}$;\\
$(b)$ $|a_iG_{S^{-1}}|=|G_{S^{-1}}|\leq min\{k-\lambda,  n-2k+t\}$;\\
$(c)$ $|G_{S^{-1}}|$ is a factor of $gcd(n, k, \lambda, \mu, t)$;\\
$(d)$ $G_{S^{-1}}=\{1_G\}$ if $gcd(n, k, \lambda, \mu, t)=1$.
\end{thm}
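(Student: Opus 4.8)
The plan is to mirror, step for step, the proof of the preceding theorem, replacing the set $S$ by $S^{-1}$ and out-neighbours by in-neighbours; the only genuinely new input needed is the description of in-neighbour sets in a Cayley graph. Specifically, for $x\in G$ one has $N_D^-(x)=xS^{-1}$: indeed $y\to x$ in $\mathcal{C}(G,S)$ means $y^{-1}x\in S$, equivalently $x^{-1}y\in S^{-1}$, equivalently $y\in xS^{-1}$.

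First I would note that $G_{S^{-1}}=\{g\in G: gS^{-1}=S^{-1}\}$ really is a subgroup of $G$ (it contains $e$ and is closed under products and inverses, since $gS^{-1}=S^{-1}$ forces $g^{-1}S^{-1}=S^{-1}$), so that the left coset decomposition $G=\bigcup_{i=1}^{q'}a_i'G_{S^{-1}}$ is meaningful. For $(a)$, two vertices $x,y$ lie in the same class of $P_{in}(D)$ iff $N_D^-(x)=N_D^-(y)$, i.e. $xS^{-1}=yS^{-1}$, i.e. $y^{-1}x\in G_{S^{-1}}$, i.e. $xG_{S^{-1}}=yG_{S^{-1}}$; hence $P_{in}(D)=\{a_1'G_{S^{-1}},\dots,a_{q'}'G_{S^{-1}}\}$. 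Part $(b)$ is then immediate from Lemma \ref{l-27}, which already covers the in-neighbour case: each class $a_i'G_{S^{-1}}$ is a set of vertices with a common in-neighbour set, and all such classes have the same size $|G_{S^{-1}}|$.

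For $(c)$ I would argue exactly as in part $(c)$ of the preceding theorem. By Lagrange's theorem $|G_{S^{-1}}|$ divides $n=|G|$ (equivalently, $P_{in}(D)$ splits the $n$ vertices into blocks of size $|G_{S^{-1}}|$); it divides $k$ because $S^{-1}$ is a union of right cosets of $G_{S^{-1}}$ (if $s\in S^{-1}$ and $g\in G_{S^{-1}}$ then $gs\in gS^{-1}=S^{-1}$), so $|G_{S^{-1}}|$ divides $|S^{-1}|=k$; and it divides $\lambda$, $\mu$ and $t$ by the same block-counting bookkeeping recorded in Definition \ref{d-11} that was used before. Finally $(d)$ is a formal consequence of $(c)$: if $\gcd(n,k,\lambda,\mu,t)=1$, then the divisor $|G_{S^{-1}}|$ must equal $1$, so $G_{S^{-1}}=\{1_G\}$.

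None of this is hard; the point to watch is simply that every step of the out-neighbour proof survives the passage from $S$ to $S^{-1}$ --- in particular that $G_{S^{-1}}$ acts on the left, so that $P_{in}(D)$ is again a family of left cosets rather than right cosets, and that the divisibility of $\lambda,\mu,t$ in $(c)$, which the earlier proof credits to the structure in Definition \ref{d-11}, carries over verbatim. As a bonus, multiplying identity $(7)$ on the right by an element $g\in G_{S^{-1}}$ --- which satisfies $Sg=S$ and hence $\overline{S}\,g=\overline{S}$ and $\overline{S}^2g=\overline{S}^2$ --- yields $(t-\mu)g=(t-\mu)e$ in $\mathbb{Z}[G]$, so that in fact $G_{S^{-1}}=\{1_G\}$ whenever $t\neq\mu$, in exact parallel with the corresponding clause of the preceding theorem.
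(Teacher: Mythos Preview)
Your proof is correct and follows essentially the same route as the paper's: identify $N_D^-(x)=xS^{-1}$, deduce that the $P_{in}$-classes are the left cosets of $G_{S^{-1}}$, then invoke Lemma~\ref{l-27} for $(b)$, the block structure of Definition~\ref{d-11} for $(c)$, and read off $(d)$ from $(c)$. Your bonus remark that $t\neq\mu$ also forces $G_{S^{-1}}=\{e\}$ (via right multiplication of $(7)$ by $g$) is correct and in fact goes slightly beyond what the paper records for this theorem.
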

\begin{proof}
We note that for distinct vertices $x, y\in G$, the in-neighbour set of $x, y$ are $xS^{-1}, yS^{-1}$ respectively, then $xS^{-1}=yS^{-1}$ $\Leftrightarrow$ $x^{-1}y\in G_{S^{-1}}$ $\Leftrightarrow$ $xG_{S^{-1}}=yG_{S^{-1}}$. Then $P_{in}(D)=\{a_1'G_{S^{-1}}, a_2'G_{S^{-1}}, \cdots, a_t'G_{S^{-1}}\}$, proving $(a)$. The assertion $(b)$ follows from Lemma \ref{l-27}, and the assertion $(c)$ can be directly deduced by the Definition \ref{d-11}. Finally, from assertion $(c)$, we can also obtain that $|G_S|=1$, $G_S=\{e_G\}$, proving $(d)$.
\end{proof}
\begin{dfn}{\label{d-12}}
Let $\mathcal{C}(G, S)$ be a DSRG.\;The digraph $\mathcal{S}_{out}(\mathcal{C}(G, S))$ is defined by $V(\mathcal{S}_{out}(\mathcal{C}(G, S)))=[G:G_S]=\{a_1G_{S}, a_2G_{S}, \cdots, a_qG_{S}\}$, and $a_iG_{S}\rightarrow a_jG_{S}$ if only if $a_i^{-1}a_j\in S$, i.e.\;$E(\mathcal{S}_{out}(\mathcal{C}(G, S)))=\{(a_iG_{S}, a_jG_{S})|(a_i, a_j)\in\mathcal{C}(G, S)\}.$

We can also define digraph $\mathcal{S}_{in}(\mathcal{C}(G, S))$ with $V(\mathcal{S}_{in}(\mathcal{C}(G, S)))=[G:G_{S^{-1}}]=\{a_1'G_{S^{-1}}, a_2'G_{S^{-1}}, \cdots, a_{q'}'G_{S^{-1}}\}$ and $E(\mathcal{S}_{in}(\mathcal{C}(G, S)))=\{(a_iG_{S^{-1}}, a_jG_{S^{-1}})|\\(a_i, a_j)\in\mathcal{C}(G, S)\}$.
\end{dfn}
\begin{thm}
Let $\mathcal{C}(G, S)$ be a directed strongly regular Cayley graph with parameters $( n, k, \mu , \lambda , t )$,
then digraphs\\
(1)\;$\mathcal{S}_{out}(\mathcal{C}(G, S))$ is a $(\frac{n}{|G_S|}, \frac{k}{|G_S|}, \frac{\mu}{|G_S|}, \frac{\lambda}{|G_S|}, \frac{t}{|G_S|})-$DSRG;\\
(2)\;$\mathcal{S}_{in}(\mathcal{C}(G, S))$ is a $(\frac{n}{|G_{S^{-1}}|}, \frac{k}{|G_{S^{-1}}|},\frac{\mu}{|G_{S^{-1}}|},\frac{\lambda}{|G_{S^{-1}}|}, \frac{t}{|G_{S^{-1}}|})-$DSRG.
\end{thm}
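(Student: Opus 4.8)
The plan is to realise $\mathcal{S}_{out}(\mathcal{C}(G,S))$ as a Cayley coset graph, apply Theorem~\ref{t-16}, and then deduce $(2)$ from $(1)$ by reversing arcs. Write $f=|G_S|$. By part $(c)$ of the theorem just proved, $f\mid\gcd(n,k,\lambda,\mu,t)$, so $n/f,k/f,\mu/f,\lambda/f,t/f$ are integers; and by part $(d)$, if $t\neq\mu$ (or $\gcd(n,k,\lambda,\mu,t)=1$) then $f=1$, $\mathcal{S}_{out}(\mathcal{C}(G,S))=\mathcal{C}(G,S)$ is the original DSRG, and the statement is trivial. Hence I assume $t=\mu$ and $f>1$.

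The first step is to identify $\mathcal{S}_{out}(\mathcal{C}(G,S))$ with the coset graph $\mathcal{C}(G,G_S,G_SSG_S)$ of Definition~\ref{d-6}: for cosets $xG_S,yG_S$ one computes $\{a^{-1}b:a\in xG_S,\ b\in yG_S\}=G_S\,x^{-1}y\,G_S$, so there is an arc $xG_S\to yG_S$ in $\mathcal{S}_{out}(\mathcal{C}(G,S))$ exactly when this double coset meets $S$, i.e.\ when $x^{-1}y\in G_SSG_S$. By Theorem~\ref{t-16} it then suffices to verify the three conditions $|G:G_S|=n/f$, $|G_SSG_S|/|G_S|=k/f$, and $\tfrac{1}{f}\overline{G_SSG_S}^{\,2}=\tfrac{t}{f}e+\tfrac{\lambda}{f}\overline{G_SSG_S}+\tfrac{\mu}{f}(\overline{G}-e-\overline{G_SSG_S})$; the first is immediate.

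The remaining two conditions both reduce to the structural identity $G_SSG_S=S$, equivalently (since $G_SS=S$ is automatic from the definition of $G_S$) $SG_S=S$, equivalently $G_S=G_{S^{-1}}$, equivalently: the partition $P_{out}(D)$ is equitable. I expect this to be the main obstacle. I would attack it through Lemma~\ref{l-7}: for $g\in G_S$ we have $g\overline{S}=\overline{S}$ and hence $g^{-1}\overline{S}=\overline{S}$; setting $U=\overline{S}g-\overline{S}=\overline{Sg}-\overline{S}$, multiplying $\overline{S}^2=te+\lambda\overline{S}+\mu(\overline{G}-e-\overline{S})$ on the right by $g$ (and using $t=\mu$, $\overline{G}g=\overline{G}$) gives $\overline{S}\,U=(\lambda-\mu)U$, while $\overline{S}^2=(\overline{S}g)(g^{-1}\overline{S})=(\overline{S}g)\overline{S}$ gives $U\overline{S}=0$. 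The coefficients of $U$ lie in $\{-1,0,1\}$ and those of $\overline{S}$ in $\{0,1\}$ with $\overline{S}\,\overline{G}=k\overline{G}$, and the task is to show these constraints force $U=0$. Granting $G_SSG_S=S$, everything else follows at once: $|G_SSG_S|/|G_S|=|S|/f=k/f$, and $\tfrac{1}{f}\overline{G_SSG_S}^{\,2}=\tfrac{1}{f}\overline{S}^2=\tfrac{t}{f}e+\tfrac{\lambda}{f}\overline{S}+\tfrac{\mu}{f}(\overline{G}-e-\overline{S})$, which is precisely the Theorem~\ref{t-16} condition for the parameters $(n/f,k/f,\mu/f,\lambda/f,t/f)$. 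This proves $(1)$.

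For $(2)$ I would use reversal. The reverse digraph of $\mathcal{C}(G,S)$ is $\mathcal{C}(G,S^{-1})$, and reversing arcs replaces the adjacency matrix $A$ by $A^{T}$, which still satisfies $JA^{T}=A^{T}J=kJ$ and $(A^{T})^2=tI+\lambda A^{T}+\mu(J-I-A^{T})$; hence $\mathcal{C}(G,S^{-1})$ is a DSRG with parameters $(n,k,\mu,\lambda,t)$ and its left stabiliser is $G_{S^{-1}}$. Unwinding Definition~\ref{d-12} shows that $\mathcal{S}_{in}(\mathcal{C}(G,S))$ is precisely the reverse of $\mathcal{S}_{out}(\mathcal{C}(G,S^{-1}))$; applying $(1)$ to $\mathcal{C}(G,S^{-1})$ and reversing once more shows $\mathcal{S}_{in}(\mathcal{C}(G,S))$ is a $(n/|G_{S^{-1}}|,\,k/|G_{S^{-1}}|,\,\mu/|G_{S^{-1}}|,\,\lambda/|G_{S^{-1}}|,\,t/|G_{S^{-1}}|)$-DSRG, completing the proof.
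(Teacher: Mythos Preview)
Your reduction is sound: both the identification of $\mathcal{S}_{out}(\mathcal{C}(G,S))$ with the coset graph $\mathcal{C}(G,G_S,G_SSG_S)$ and the group-ring computation via Theorem~\ref{t-16} hinge on the identity $SG_S=S$ (equivalently $G_S=G_{S^{-1}}$), and you rightly flag this as the main obstacle. The paper's proof is different in form---it argues directly that all in-degrees in $\mathcal{S}_{out}$ equal $k/|G_S|$, that all out-degrees are at least $k/|G_S|$, and that each path count scales by $1/|G_S|$---but these last two steps silently presuppose that every $a_iS$ is a union of left $G_S$-cosets, which is again $SG_S=S$. So the two approaches stand or fall together on the same unproved claim.

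Unfortunately that claim is \emph{not} forced by the constraints $\overline{S}\,U=(\lambda-\mu)U$ and $U\overline{S}=0$ that you set up; it fails already in the smallest nontrivial instance of Theorem~\ref{t-9}. Take $G=\langle a,x\mid a^3=x^2=e,\ xa=a^2x\rangle\cong S_3$ and $S=\{a,ax\}$: this is a $(6,2,1,0,1)$-DSRG with $t=\mu$. One computes $G_S=\{e,a^2x\}$, yet $S\cdot a^2x=\{x,a^2\}\neq S$, and indeed $G_{S^{-1}}=\{e,x\}\neq G_S$. With coset representatives $e,x,ax$ the digraph $\mathcal{S}_{out}$ has out-degree sequence $(1,0,2)$, so it is not even regular; with representatives $e,a,a^2$ it is the directed $3$-cycle, a $(3,1,1,0,0)$-DSRG rather than the asserted $(3,1,\tfrac12,0,\tfrac12)$. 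Thus the statement as written needs an extra hypothesis such as $SG_S=S$ (equivalently $G_S=G_{S^{-1}}$); granted that, your coset-graph argument goes through verbatim, but without it neither your route nor the paper's can be completed.
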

\begin{proof}
We first prove the assertion for $\mathcal{S}_{out}(\mathcal{C}(G, S))$.
From the definition of $\mathcal{S}_{out}(\mathcal{C}(G, S))$,\;we have $|V(\mathcal{S}_{out}(\mathcal{C}(G, S)))|$$=\frac{n}{|G_S|}$, $d_{\mathcal{S}_{out}(\mathcal{C}(G, S))}^{-}(a_iG_S)=\frac{k}{|G_S|}$,\;and\; $d_{\mathcal{S}_{out}(\mathcal{C}(G, S))}^{+}(a_iG_S)\geq\frac{k}{|G_S|}$ for all $1\leq i\leq q$.
We note that
\[\sum_{i=1}^td_{\mathcal{S}_{out}(\mathcal{C}(G, S))}^{-}(a_iG_S)=\sum_{i=1}^td_{\mathcal{S}_{out}(\mathcal{C}(G, S))}^{+}(a_iG_S).\]
Then $d_{\mathcal{S}_{out}(\mathcal{C}(G, S))}^{-}(a_iG_S)=d_{\mathcal{S}_{out}(\mathcal{C}(G, S))}^{+}(a_iG_S)=\frac{k}{|G_S|}$, so $\mathcal{S}_{out}(\mathcal{C}(G, S))$ is regular digraph.

 If $a_iG_S \rightarrow a_jG_S$, then $a_i\rightarrow a_j$ in $\mathcal{C}(G, S)$. Hence in $\mathcal{C}(G, S)$,\;the number of paths of length two from the vertex $a_i$ to the vertex $a_j$ is $\lambda$,\;so the number of paths of length two from the vertex $a_iG_S$ to the vertex $a_jG_S$ is $\frac{\lambda}{|G_S|}$ in $\mathcal{S}_{out}(\mathcal{C}(G, S))$.

  If $a_iG_S \nrightarrow a_jG_S$, then $a_i\nrightarrow a_j$ in $\mathcal{C}(G, S)$. Hence in $\mathcal{C}(G, S)$,
  the number of paths of length two from the vertex $a_i$ to the vertex $a_j$ is $\mu$, then the number of paths of length two from the vertex $a_iG_S$ to the vertex $a_jG_S$ is $\frac{\mu}{|G_S|}$ in $\mathcal{S}_{out}(\mathcal{C}(G, S))$.\; If $a_iG_S=a_jG_S$, then $a_iG_S$ contains in the $\frac{t}{|G_S|}$ $2-cycle$ exactly. Thus $\mathcal{S}_{out}(\mathcal{C}(G, S))$ is a DSRG with parameters $(\frac{n}{|G_S|}, \frac{k}{|G_S|}, \frac{\mu}{|G_S|}, \frac{\lambda}{|G_S|}, \frac{t}{|G_S|})$.\;
   The similar result for $\mathcal{S}_{in}(\mathcal{C}(G, S))$ is also hold, since $d_{\mathcal{S}_{in}(\mathcal{C}(G, S))}^{+}(a_iG_{S^{-1}})=\frac{k}{|G_{S^{-1}}|}$, $d_{\mathcal{S}_{in}(\mathcal{C}(G, S))}^{-}(a_iG_{S^{-1}})\geq\frac{k}{|G_{S^{-1}}|}$ for each $1\leq i\leq q'$.
\end{proof}
\begin{rmk}These digraphs $\mathcal{S}_{out}(\mathcal{C}(G, S))$ and  $\mathcal{S}_{in}(\mathcal{C}(G, S))$ are different from Cayley coset graph.
\end{rmk}
\subsection{The automorphsim group of DSRG(Cayley graphs)}
In this section,\;we give an upper bound of $|Aut(\mathcal{C}(G, S))|$ in term of $|G_S|$ and $|G_{S^{-1}}|$.
\begin{thm}
Let $D$ be a directed strongly regular Cayley graph $\mathcal{C}(G, S)$ with parameters $( n, k, \mu , \lambda , t )$.\;Then $$|Aut(\mathcal{C}(G, S))|\leq \min\{(\frac{n}{|G_S|})!(|G_S|)!,(\frac{n}{|G_{S^{-1}}|})!(|G_{S^{-1}}|)!\}.$$
\end{thm}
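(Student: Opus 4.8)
The plan is to exploit the structural results just established. By the preceding theorem, $P_{out}(D)=\{a_1G_S,\dots,a_qG_S\}$ is a partition of $V(D)$ into $q=n/|G_S|$ blocks of equal size $|G_S|$, and — crucially — this partition is intrinsic to $D$, since ``two vertices have the same out-neighbour set'' is a purely digraph-theoretic condition. Hence every $\phi\in\mathrm{Aut}(D)$ sends each $P_{out}$-block to a $P_{out}$-block of the same size, so it induces a permutation $\bar\phi$ of the $q$ blocks. This gives a group homomorphism $\Phi\colon\mathrm{Aut}(D)\to S_q$, $\phi\mapsto\bar\phi$, whence $|\mathrm{Aut}(D)|=|\ker\Phi|\cdot|\mathrm{Aut}(D)/\ker\Phi|\le|\ker\Phi|\cdot q!=|\ker\Phi|\cdot(n/|G_S|)!$.

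The heart of the argument is then to show $|\ker\Phi|\le|G_S|!$. Fix the block $B_1=G_S$. An element $\phi\in\ker\Phi$ fixes every block setwise, so it restricts to a permutation $\phi|_{B_1}$ of $B_1$, giving a map $\ker\Phi\to\mathrm{Sym}(B_1)\cong S_{|G_S|}$; it suffices to prove this map is injective, i.e.\ that $\phi\in\ker\Phi$ with $\phi|_{B_1}=\mathrm{id}$ forces $\phi=\mathrm{id}$. First I would note that such a $\phi$ also fixes every $P_{in}$-block setwise: because $S^{-1}$ is a union of left cosets of $G_S$, the in-neighbour set $N_D^-(v)$ of every vertex $v$ is a union of $P_{out}$-blocks, so $\phi$ fixing all $P_{out}$-blocks forces $N_D^-(\phi v)=N_D^-(v)$, i.e.\ $\phi v$ lies in the $P_{in}$-block of $v$. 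Next, since $\mu>0$ the digraph $D$ is strongly connected, so $S$ generates $G$ and $V(D)=B_1\cup N_D^+(B_1)\cup N_D^+(N_D^+(B_1))\cup\cdots$; propagating fixed points along arcs issuing from the pointwise-fixed block $B_1$ — each newly reached vertex $w$ being pinned down inside (the intersection of its $P_{out}$-block with its $P_{in}$-block) together with the constraint $\phi(u)\to\phi(w)$ for a fixed in-neighbour $u\in B_1$ — one concludes $\phi$ fixes $V(D)$ pointwise. This yields $|\mathrm{Aut}(D)|\le|G_S|!\cdot(n/|G_S|)!$.

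Finally, the entire argument is symmetric under $S\leftrightarrow S^{-1}$. The digraph $\mathcal C(G,S^{-1})$ is precisely the arc-reversal of $\mathcal C(G,S)$; reversing arcs sends the adjacency matrix $A$ to $A^{T}$ and preserves the two defining identities $JA=AJ=kJ$ and $A^{2}=tI+\lambda A+\mu(J-I-A)$, so $\mathcal C(G,S^{-1})$ is again a DSRG with the same parameters, its automorphism group coincides with $\mathrm{Aut}(D)$, and its $P_{out}$-partition is exactly $P_{in}(D)$, with blocks of size $|G_{S^{-1}}|$. Re-running the two steps above with this digraph gives $|\mathrm{Aut}(D)|\le|G_{S^{-1}}|!\cdot(n/|G_{S^{-1}}|)!$, and taking the minimum of the two bounds completes the proof.

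The hard part will be the injectivity claim $\ker\Phi\hookrightarrow\mathrm{Sym}(B_1)$ in the second paragraph: the propagation step is clean precisely when every vertex is singled out inside its own $P_{out}$-block by its adjacency relations to $B_1$, which amounts to requiring that $D$ have no two distinct vertices with identical in- and out-neighbourhoods — equivalently $G_S\cap G_{S^{-1}}=\{e_G\}$. Dealing with (or ruling out) this ``twin-vertex'' situation is where one must invoke the finer DSRG identities rather than the partition structure alone, and I expect it to be the main obstacle.
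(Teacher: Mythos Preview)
Your high-level strategy coincides with the paper's: observe that every automorphism permutes the $P_{out}$-blocks (the left cosets of $G_S$), giving a homomorphism $\Phi\colon\mathrm{Aut}(D)\to S_q$ with $q=n/|G_S|$, and then argue symmetrically for $G_{S^{-1}}$. The paper's own proof, however, stops at that observation and simply \emph{asserts} the bound $q!\,|G_S|!$; it never addresses the size of $\ker\Phi$ at all. You have located the missing step precisely: to pass from the trivial wreath-product bound $q!\,(|G_S|!)^{q}$ to the claimed $q!\,|G_S|!$ one needs $\ker\Phi\hookrightarrow\mathrm{Sym}(B_1)$, and this injectivity is exactly what is threatened by twin vertices, i.e.\ by $G_S\cap G_{S^{-1}}\neq\{e\}$.

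Your hope that ``finer DSRG identities'' will dispose of the twin obstruction is unfortunately misplaced: the stated inequality is false. Take the $(6,2,1,0,1)$-DSRG $D=\mathcal C(S_3,\{a,ax\})$ produced by the paper's construction with $p=3$, $n=2$, $v=1$; a direct check gives $G_S=\{e,a^{2}x\}$ and $G_{S^{-1}}=\{e,x\}$. Now form $D'=\mathcal C(S_3\times C_m,\,S\times C_m)$ for any $m\ge 2$; since $t=\mu$ in $D$, Duval's $A\otimes J_m$ result makes $D'$ a DSRG, and one has $G'_{S'}=G_S\times C_m$, $G'_{(S')^{-1}}=G_{S^{-1}}\times C_m$, both of order $2m$, so the claimed bound is $3!\,(2m)!$. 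But in $D'$ every pair $(v,i),(v,j)$ has identical in- and out-neighbourhoods, so permuting each fibre $\{v\}\times C_m$ independently already yields $(m!)^{6}$ automorphisms lying in $\ker\Phi$; combined with the six left translations by $S_3\times\{0\}$ this gives $|\mathrm{Aut}(D')|\ge 6\,(m!)^{6}$. For $m=2$ one gets $384>144=3!\cdot 4!$, and the discrepancy only grows with $m$. Thus neither the injectivity step nor the theorem itself can be salvaged; the block argument genuinely delivers only the wreath bound $q!\,(|G_S|!)^{q}$.
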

\begin{proof}
let $\tau\in Aut(\mathcal{C}(G, S))$, if $x$ and $y$ have the same out-neighbour set, then $\tau(x)$ and $\tau(y)$ also have the same out(in)-neighbour set. Thus for each $1\leq i\leq q$, $\tau$ maps all vertices in $a_iG_S$ to another coset $a_{j_{i}}G_S$ for some $j_i$, where $j_1, j_2, \cdots, j_q$ is a permutation of ${1, 2, \cdots, q}$. Thus
$|Aut(\mathcal{C}(G, S))|\leq(\frac{n}{|G_S|})!(|G_S|)!$. In a similar way,\;we can also get
$|Aut(\mathcal{C}(G, S))|\leq(\frac{n}{|G_S^{-1}|})!(|G_{S^{-1}}|)!$.
\end{proof}
\section*{Acknowledgements}
The author are grateful to those of you who support to us.
\section*{References}

\bibliographystyle{plain}

\bibliography{666666}

\begin{thebibliography}{10}
\expandafter\ifx\csname url\endcsname\relax
  \def\url#1{\texttt{#1}}\fi
\expandafter\ifx\csname urlprefix\endcsname\relax\def\urlprefix{URL }\fi
\expandafter\ifx\csname href\endcsname\relax
  \def\href#1#2{#2} \def\path#1{#1}\fi

\bibitem{A}
A.~M. Duval, A directed graph version of strongly regular graphs, Journal of
  Combinatorial Theory, Series A 47~(1) (1988) 71--100.

\bibitem{BO}
J.~A. Bondy, U.~S.~R. Murty, Graph theory with applications, Vol. 290,
  Citeseer, 1976.

\bibitem{FI}
F.~Fiedler, M.~Klin, C.~Pech, Directed strongly regular graphs as elements of
  coherent algebras, General Algebra and Discrete Mathematics, Shaker Verlag,
  Aachen (1999) 69--87.

\bibitem{D}
A.~M. Duval, D.~Iourinski, Semidirect product constructions of directed
  strongly regular graphs, Journal of Combinatorial Theory, Series A 104~(1)
  (2003) 157--167.

\bibitem{F}
F.~Fiedler, M.~Klin, M.~H. Muzychuk, Small vertex-transitive directed strongly
  regular graphs, Discrete mathematics 255~(1) (2002) 87--115.

\bibitem{K1}
M.~Klin, A.~Munemasa, M.~Muzychuk, P.-H. Zieschang, Directed strongly regular
  graphs via coherent (cellular) algebras, preprint Kyushu-MPS-1997-12, Kyushu
  University.

\bibitem{AD}
F.~Adams, A.~Gendreau, O.~Olmez, S.~Y. Song, Construction of directed strongly
  regular graphs using block matrices, arXiv preprint arXiv:1311.0494.

\bibitem{OL}
O.~Olmez, S.~Y. Song, Construction of directed strongly regular graphs using
  finite incidence structures, arXiv preprint arXiv:1006.5395.

\bibitem{BR}
A.~Brouwer, O.~Olmez, S.~Y. Song, Directed strongly regular graphs
  from-designs, European Journal of Combinatorics 33~(6) (2012) 1174--1177.

\bibitem{HO}
S.~A. Hobart, T.~Justin~Shaw, A note on a family of directed strongly regular
  graphs, European Journal of Combinatorics 20~(8) (1999) 819--820.

\bibitem{FE}
R.~Feng, L.~Zeng, Construction of directed strongly regular graphs as
  generalized cayley graphs, arXiv preprint arXiv:1410.1161.

\bibitem{JG}
L.~K. J{\o}rgensen, Directed strongly regular graphs with rank 5, Linear
  Algebra and Its Applications 477 (2015) 102--111.

\bibitem{MA}
J.~Ma, B.~Zhang, J.~Liu, J.~Zhao, Directed strongly regular graphs with rank 6,
  Discrete Mathematics.

\bibitem{FO}
N.~Foxa, Spectra of semidirect products of cyclic groups, Rose-Hulman
  Undergraduate Mathematics Journal 11~(2).

\bibitem{RA}
R.~A. Horn, C.~R. Johnson, Matrix analysis, Cambridge university press, 2012.

\end{thebibliography}
\end{document}